\documentclass{amsart}

\usepackage{amssymb, amsmath, fancyhdr, hyperref, mathrsfs,a4wide,color, MnSymbol}
\usepackage[2cell,arrow,curve,matrix]{xy}

\newtheorem{Pro}{Proposition}[subsection]
\newtheorem{Le}[Pro]{Lemma}
\newtheorem{Th}[Pro]{Theorem}
\newtheorem{Rem}[Pro]{Remark}
\newtheorem{Co}[Pro]{Corollary}
\newtheorem{De}[Pro]{Definition}
\newtheorem{Exm}[Pro]{Example}

\def\ta{{\mathscr T}}
\def\set{\mathscr{S}}

\def\sc{{\sf Sets}}

\def\sF{\sf{F}}
\def\id{{\sf id}}
\def\Top{{\sf Top}}
\def\pts{\sf{Pts}}

\def\red{{\sf red}}

\def\cat{\sf Cat}
\def\Proj{{\sf Proj}}
\def\v{{\sf v}}

\def\Pts{{\bf Pts }}

\def\I{{\mathbb I}}

\def\N{{\mathbb N}}
\def\Z{{\mathbb Z}}

\def\p{\mathfrak p}
\def\q{{\mathfrak q}}
\def\n{{\mathfrak n}}
\def\m{{\mathfrak m}}
\def\Qc{{\mathfrak Qc}}
\def\f{\mathfrak{f}}
\def\fx{{\frak x}}
\def\r{\frak r}
\def\G{\mathfrak{G}}

\def\O{{\mathcal O}}
\def\F{{\mathcal F}}
\def\cX{\mathcal{P}}

\DeclareMathOperator\Hom{{\mathsf Hom}}
\DeclareMathOperator\Spec{\mathsf{Spec}}
\DeclareMathOperator\Aut{{\mathsf{Aut}}}
\DeclareMathOperator\End{{\mathsf{End}}}

\DeclareMathOperator\im{{\mathsf{Im}}}
\DeclareMathOperator{\Mg}{M^{\mathsf Gr}}

\begin{document}

\title{Topos points of quasi-coherent sheaves of monoid schemes}

\author[I. Pirashvili]{Ilia Pirashvili}
\address{	Institute for Mathematics, 	University of Osnabrück, Albrechtstr. 28a, D-\textup{49069}, Germany} 
\email{\href{mailto:ilia\_p@ymail.com}{ilia\_p@ymail.com} \vspace{2em} \href{mailto:ilia.pirashvili@uni-osnabrueck.de}{ilia.pirashvili@uni-osnabrueck.de} }

\maketitle

\begin{abstract} Let $X$ be a monoid scheme. We will show that the stalk at any point of $X$ defines a point of the topos $\Qc(X)$ of quasi-coherent sheaves over $X$. As it turns out, every topos point of $\Qc(X)$ is of this form if $X$ satisfies some finiteness conditions. In particular, it suffices for $M/M^\times$ to be finitely generated when $X$ is affine, where $M^\times$ is the group of invertible elements.
	
	This allows us to prove that two quasi-projective monoid schemes $X$ and $Y$ are isomorphic if and only if $\Qc(X)$ and $\Qc(Y)$ are equivalent.
	
	The finiteness conditions are essential, as one can already conclude by the work of A. Connes and C. Consani \cite{cc1}. We will study the topos points of free commutative monoids and show that already for $\mathbb{N}^\infty$, there are `hidden' points. That is to say, there are topos points which are not coming from prime ideals. This observation reveals that there might be a more interesting `geometry of monoids'.
\end{abstract}

\section{Introduction} 

Monoid schemes were introduced by Deitmar \cite{deitmar} after a pioneering work of Kato \cite{kato}. One of their most important aspects is that they allow us to generalise toric varieties, but they have many other applications as well. They are used in algebraic $K$-theory \cite{chww}, are a fundamental part in various models of algebraic geometry over a "field with one element" \cite{Lorscheid}, \cite{cc}, \cite{deitmar}, and have strong uses in areas such as tropical- \cite{pin} and logarithmic geometry \cite{acgh}, \cite{kato2}. We investigated vector bundles over monoid schemes in \cite{p2}. In the present paper, we shall consider the  quasi-coherent sheaves  over a monoid scheme $X$. These objects have already been considered in \cite{Lorscheid}, where some basic facts about such sheaves were established. 

Our interest is to study the global properties of quasi-coherent sheaves over monoid schemes. To explain our results, let us recall a classical result of algebraic geometry: It was established by Gabriel in \cite{ab}, that the category of quasi-coherent sheaves over a noetherian scheme (in the classical sense) is a locally noetherian abelian category. Further, the set of isomorphism classes of indecomposable injective objects of said category, is in a one to one correspondence with the underlying topological space of a given scheme.

In this paper, we consider the category $\Qc(X)$ of quasi-coherent sheaves over a monoid scheme $X$. If $X=\Spec(M)$ is an affine monoid scheme, then $\Qc(\Spec(M))$ is equivalent to the category $\set_M$ of $M$-sets. This is naturally a topos (throughout this paper, a topos is assumed to be a Grothendieck topos). Indeed, it is one of the main examples of a topos. Theorem \ref{432.05.03} says that the category $\Qc(X)$ is also a topos. Moreover, it is a so called locally $s$-noetherian topos if $X$ is an $s$-noetherian monoid scheme. We refer to the main body of this paper for the definition of an $s$-noetherian monoid scheme, respectively topos.

There exists a general notion of a point in topos theory, called a topos point. The category of all topos points of a topos $\ta$ will be denoted by $\Pts(\ta)$. They are a core construction for any topos as the name would indicate. The category $\Pts(\set_G)$ for a group $G$, as an example, is equivalent to the category $\G$, having a single object $x$, with $\Aut(x)=\End(x)=G$, see Example \ref{t-233}. This has been known for a long time.

We will study the topos points of the topos $\Qc(X)$ of quasi-coherent sheaves over an $s$-noetherian monoid scheme $X$. As the recent work of Connes and Consani \cite{cc1} demonstrates, this problem is quite interesting even in the affine case.

Let $x$ be a point of a monoid scheme $X$. The stalk at $x$ can be considered as a functor
$${\sf Stalk}(x):\Qc(X)\rightarrow \sc.$$
It is straightforward to see that this is the inverse image part of a point of the topos $\Qc(X)$. The question we aim to answer is under what conditions on $X$ every topos point of $\Qc(X)$ is induced by a stalk of $X$. In general, there are `hidden' points (see Section \ref{connes}) even in the affine case. Under some finiteness assumptions on $X$ however, the only points of the topos $\Qc(X)$ correspond to the stalks of $X$. This is one of our main result, which is proven in Section \ref{poaff} and Section \ref{poproj}, in the affine and quasi-projective case respectively.

In addition to the conceptual nature of this result, it also significantly simplifies the calculation of $\Pts(\set_M)$ for finitely generated $M$, as it reduces it to calculating the set of prime ideals $\Spec(M)$. This, on the other hand, is very simple if the monoid $M$ is given in terms of generators and relations, using the results obtained in \cite{p1}. 

It will follow from our results that, under some conditions, two monoid schemes $X$ and $Y$ are isomorphic if and only if their associated topoi of quasi-coherent sheaves $\Qc(X)$ and $\Qc(Y)$ are equivalent, see Theorem \ref{7550804}.
\newline

The paper is organised as follows: We recall some basic facts about topoi and introduce the notion of a locally $s$-noetherian topos in Section \ref{010804}. The main result says that the gluing of locally $s$-noetherian topoi is again a locally $s$-noetherian topos. This fact is a direct analogue of a result by Gabriel \cite{ab}, which says that the gluing of locally noetherian abelian categories is again a locally noetherian abelian category.

Section \ref{020804} deals with monoids and monoid actions (on sets). We also discuss $s$-noetherian monoids. This is a class of monoids that contains finitely generated monoids, as well as all groups. We will also look at their relation with $s$-noetherian topoi. Much of the material discussed here is probably well known.

We introduce the notion of an $s$-noetherian monoid scheme in Section \ref{030804} and show that the category of quasi-coherent sheaves over such a monoid scheme is a locally $s$-noetherian topos.

In Section \ref{poaff}, we study the topos points of the category of $M$-sets, where $M$ is a monoid. We relate such points with the prime ideals of $M$ in the case when $M$ is commutative. We prove one of our main results of this paper, which says that there is a one to one correspondence between prime ideals and the isomorphism classes of points of the topos of $M$-sets, assuming $M$ is almost finitely generated. The latter term means that $M/M^\times$ is finitely generated, where $M^\times$ denotes the group of invertible elements of $M$.

We proceed in Section \ref{060804} by studying positively graded monoids and their actions on graded sets. This is a topos, much like the topos of $M$-sets and we will once again be interested in its points. This section serves as a technical preparation for the next. In Section \ref{poproj}, we will prove our main results concerning the points of the topos of quasi-coherent sheaves over a monoid scheme. In particular, we will show that the topos of quasi-coherent sheaves is rich enough to distinguish between the underlying monoid schemes in Theorem \ref{7550804}, under some assumptions on $X$.

We return to the affine case in the last section. The prime ideals are no longer going to be enough to classify all the topos points of $M$-sets. We will focus on the case when $M$ is a free monoid, generated by an infinite set $\cX$. We will also reprove some results of \cite{cc1} for countable $\cX$. Our methods and descriptions, however, are completely different.

\section{Gluing of locally $s$-noetherian topoi}\label{010804}

We will study an obvious topos-theoretical analogue of locally noetherian abelian categories in this section. The results that we obtain echo this similarity, and can be directly compared to the ones obtained by Gabriel in \cite{ab} for abelian categories. We will need this technique to prove Theorem \ref{432.05.03} below. It states that the category of quasi-coherent sheaves over a monoid scheme $X$, satisfying some finiteness conditions, is a topos possessing a system of generators, for which the ascending chain condition holds for every subobject of its generators.

\subsection{Preliminaries on topoi}

Recall that a \emph{Grothendieck topos}, or simply a topos, is a category $\ta$, equivalent to the category of set valued sheaves over a site \cite{mm}.  The category ${\sf Sh}(X)$ of all set valued sheaves over a topological space $X$ is an example of a topos.

A famous theorem of Giraud \cite{mm} asserts that a category $\ta$ is a topos if and only if the following axioms G1-G5 hold:
\begin{itemize}
\item[(G1)] The category $\ta$ has finite limits.
\item[(G2)] The category $\ta$ has colimits and they commute with pullbacks. 
\item[(G3)] Let $A_i$, $i\in I$ be a family of objects in $\ta$. The following
$$\xymatrix{ \emptyset\ar[r]\ar[d] & A_i\ar[d]\\A_j\ar[r] & \coprod A_i }$$
is a pullback diagram for any $i,j\in I$, such that $i\not=j$. Throughout this paper $\emptyset$ denotes the initial object of $\ta$.
\item[(G4)] Let $f:B\rightarrow A$ be an epimorphism and
$$\xymatrix{ C\ar[r]^{p_1}\ar[d]_{p_2} & B\ar[d]^f\\B\ar[r]_f & A }$$
a pullback diagram. The following
$$\xymatrix{ C \ar@<-.5ex>[r] \ar@<.5ex>[r] & B\ar[r]^f & A }$$
is a coequaliser.
\item[(G5)] There exist a set $I$ and a family of objects $G_i$, $i\in I$, called \emph{generators}, such that for any two parallel arrows $u,v:A\rightarrow B$ with $u\not=v$, there exist an $i\in I$ and an arrow $a:G_i\rightarrow A$, such that $ua\not=va$.
\end{itemize}

\begin{Le}\label{321} Let $A$ be an object of a topos $\ta$ and $B\subseteq A$. If any morphism $f:G_i\rightarrow A$ factors through $B$, then $B=A$. 
\end{Le}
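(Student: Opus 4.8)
The plan is to represent the subobject $B\subseteq A$ by a monomorphism $m:B\to A$ and to prove that $m$ is an isomorphism, which is exactly the assertion $B=A$. I would obtain this in two steps: first that $m$ is an epimorphism, then that a monic epimorphism in a topos is invertible.

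For the first step I would argue by contradiction using the generating property (G5), and this is the only place where the hypothesis enters. Suppose $u,v:A\to C$ are parallel arrows with $um=vm$ but $u\neq v$. By (G5) there is a generator $G_i$ and a morphism $a:G_i\to A$ with $ua\neq va$. By hypothesis $a$ factors through $B$, say $a=m\tilde a$ with $\tilde a:G_i\to B$. Then $ua=um\tilde a=vm\tilde a=va$, contradicting the choice of $a$. Hence $u=v$ and $m$ is an epimorphism. The point to recognise is simply that the factorisation hypothesis is a lifting-against-generators condition, which by contraposition of (G5) immediately separates arrows out of $A$.

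For the second step I would feed $m$ into (G4), which is the cleanest way to extract balancedness directly from the axioms listed above rather than quoting it as a black box. Since $m$ is a monomorphism, its kernel pair $C=B\times_A B$ is the diagonal, so both projections $p_1,p_2:C\to B$ are identified with $\id_B$. As $m$ is an epimorphism, (G4) exhibits $A$ as the coequaliser of $p_1,p_2$; but the coequaliser of a pair of identical arrows is $\id_B$ itself. By uniqueness of coequalisers up to canonical isomorphism, $m$ is an isomorphism, whence $B=A$.

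I expect no serious obstacle, as the statement is elementary once the hypothesis is read correctly. The only step genuinely requiring the topos structure is the passage from ``monic and epic'' to ``iso'', since an arbitrary category need not be balanced; there I would record explicitly the computation of the kernel pair of a mono and of the coequaliser of identical arrows so that the use of (G4) is honest. An equivalent route for the second step, should it read more smoothly in context, is to compare the classifying map $\chi_B:A\to\Omega$ of $B$ with the map classifying $A$ itself and check their equality on each generator via the same factorisation.
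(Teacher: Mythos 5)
Your proof is correct, and its overall skeleton is the same as the paper's: use the factorisation hypothesis together with the generating property to show the inclusion $B\rightarrow A$ is an epimorphism, then conclude by balancedness of topoi (mono $+$ epi $\Rightarrow$ iso). The differences are in how each step is executed, and both of your choices are sound. For the epimorphism step, the paper forms the pushout of the inclusion $\alpha$ along itself and shows the two pushout injections $\beta_1,\beta_2$ coincide on every generator, which implies $\beta_1=\beta_2$ and hence that $\alpha$ is epi via the universal property of the pushout; you instead verify the definition of epimorphism directly, taking arbitrary $u,v$ with $um=vm$ and separating them on a generator, which is slightly more elementary since it avoids invoking the cokernel-pair criterion. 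For the second step, the paper simply quotes the fact that a topos is balanced, whereas you derive it honestly from (G4): the kernel pair of the mono $m$ is the diagonal, so (G4) exhibits $m$ as a coequaliser of $(\id_B,\id_B)$, forcing $m$ to be an isomorphism. Your version is therefore more self-contained relative to the axioms G1--G5 as stated, at the cost of a little extra length; the paper's is shorter but leans on a black-box fact about topoi.
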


\begin{proof} Let $\alpha$ be the inclusion $B\rightarrow A$. We need to show that $\alpha$ is an isomorphism. Let us recall that a morphism in a topos is an isomorphism, if and only if it is both a monomorphism and an epimorphism. To see that $\alpha$ is an epimorphism, let
$$\xymatrix{ B\ar[r]^\alpha\ar[d]_\alpha&A\ar[d]^{\beta_1}\\A\ar[r]_{\beta_2}& C }$$
be a pushout diagram. It suffices to show that $\beta_1=\beta_2$. Take any morphism $\gamma:G_i\rightarrow A$. By assumption, $\gamma=\alpha \delta$ for some morphism $\delta:G_i\rightarrow B$. Hence,
$$\beta_1\gamma=\beta_1\alpha\delta=\beta_2\alpha\delta=\beta_2\gamma$$
and we obtain $\beta_1=\beta_2$ since the objects $G_i$, $i\in I$ are generators. This finishes the proof.
\end{proof}

A \emph{geometric morphism}, or simply morphism, between topoi $f:\ta\rightarrow \ta_1$ consists of a pair of functors $f^\bullet:\ta_1\rightarrow \ta$ and $f_\bullet:\ta\rightarrow\ta_1$, called the \emph{inverse} and  \emph{direct image} respectively. These functors must satisfy the following two properties:
\begin{itemize}
\item[(i)] $f^\bullet$ is the left adjoint of $f_\bullet$, (in particular $f^\bullet$ commutes with all colimits, while $ f_\bullet$ commutes with all limits).
\item[(ii)] $f^\bullet$ commutes with finite limits.
\end{itemize}
The adjoint $f_\bullet$ (resp. $f^\bullet$) of a given functor $f^\bullet$(resp. $f_\bullet$), if it exists, is determined uniquely up to an isomorphism. As such, we can also define a geometric morphism $f:\ta\rightarrow \ta_1$ to simply be a functor $f^\bullet:\ta_1\rightarrow \ta$ that commutes with all colimits and with finite limits, and that has a left adjoint. There are many technical advantages in looking at geometric morphisms in this way, and we shall do so many times throughout this paper.

We mention as an example that any continuous map $f:X\rightarrow Y$ of topological spaces yields a pair of adjoint functors $(f^\bullet,f_\bullet):{\sf Sh}(X)\rightarrow {\sf Sh}(Y)$, defining a geometric morphism of topoi ${\sf Sh}(X)\rightarrow {\sf Sh}(Y)$, which is still denoted by $f$. Let $\mathcal F$ be a sheaf over $X$ and $\mathcal G$ a sheaf over $Y$. Recall that $f_\bullet({\mathcal F})(V)={\mathcal F}(f^{-1}(V))$ and $f^\bullet({\mathcal G})_x={\mathcal G}_{f(x)}$ for any open subset $V\subseteq Y$ and any $x\in X$.

Topoi and morphisms of topoi form a 2-category $\Top$. The 2-morphisms (or 2-cells) between morphism are natural transformations between the inverse image functors. The category of sets $\set$ is the 2-terminal object in this 2-category.

A \emph{point} of a topos $\ta$ is a geometric morphism $\set\rightarrow \ta$. The points of a topos $\ta$ form a category, which we denote by $\Pts(\ta)$. The isomorphism classes of $\Pts(\ta)$ will be denoted by ${\sf F}_\ta$.
Any morphism of topoi $f=(f^\bullet,f_\bullet):\ta\rightarrow\ta_1$ yields the functor $\Pts(f):\Pts(\ta)\rightarrow\Pts(\ta_1)$, which in turn induces the map ${\sf F}_f :{\sf F}_{\ta}\rightarrow {\sf F}_{\ta'}$.

\subsection{Localisations}

Let $\bf C$ and $\bf D$ be categories. A functor $j^\bullet:{\bf D}\rightarrow {\bf C}$ is called a \emph{localisation of categories} if it commutes with all colimits and finite limits. Further, $j^\bullet$ needs to posses a right adjoint $j_\bullet: {\bf C}\rightarrow \bf D$, which is full and faithful. It is well-known that, for the adjoint pair $j=(j^\bullet,j_\bullet):{\bf C}\rightarrow {\bf D}$, the functor $j^\bullet$ is a localisation of categories if and only if $j^\bullet$ respects finite colimits and the counit of the adjunction  $j^\bullet\circ j_\bullet\rightarrow \id_{\bf C}$ is an isomorphism. Denote by $\Sigma_j$ the collection of morphisms $\alpha$ of $\bf D$ for which $j^\bullet(\alpha)$ is an isomorphism. It is also well-known that the induced functor ${\bf D}[\Sigma_j^{-1}]\rightarrow \bf C$ is an equivalence of categories. Here, ${\bf D}[\Sigma_j^{-1}]$ is the category (if it exists) which is obtained by inverting the arrows in $\Sigma_j$.

Assume $j=(j^\bullet,j_\bullet):{\bf C}\rightarrow {\bf D}$  is an adjoint pair of functors, such that $j^\bullet$ is a  localisation of categories.  It is well-known that $\bf C$ is a topos if $\bf D$ is a topos. Hence, $j=(j_\bullet,j^\bullet)$ defines a morphism of topoi ${\bf C} \rightarrow \bf D$ with a full and faithful direct image $j_\bullet$. Such geometric morphisms are called \emph{embeddings of topoi} in \cite{mm}. We shall, however, stick with the notation of \cite{ab} and refer to them as localisations of topoi. Recall that if $U$ be an open subset of a topological space $X$, the inclusion $j:U\rightarrow X$ induces a localisation of categories $j^\bullet:{\sf Sh}(X)\rightarrow {\sf Sh}(U)$.

We also have the following easy, but important fact.

\begin{Le}\label{points_after_loc} Let $j=(j^\bullet,j_\bullet): \ta\rightarrow \ta_1$ be a geometric morphism of topoi, for which $j^\bullet$ is a localisation of categories. The induced map ${\sf F}_j:{\sf F}_{\ta}\rightarrow {\sf F}_{\ta'}$ is injective.
\end{Le}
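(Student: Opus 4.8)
The plan is to recover the inverse image functor of a point of $\ta$ from the inverse image of its image under $\Pts(j)$, using the defining property of a localisation. Recall first that a point $p$ of $\ta$ is a geometric morphism $p=(p^\bullet,p_\bullet):\set\rightarrow\ta$, and that two points are isomorphic exactly when their inverse image functors are naturally isomorphic, since the $2$-cells between points are by definition natural transformations of the inverse image functors. The functor $\Pts(j)$ sends $p$ to the composite $j\circ p:\set\rightarrow\ta_1$, whose inverse image is $p^\bullet\circ j^\bullet:\ta_1\rightarrow\set$. Thus ${\sf F}_j$ is, on isomorphism classes, the assignment $[p^\bullet]\mapsto[p^\bullet\circ j^\bullet]$.

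Next, suppose $p$ and $q$ are points of $\ta$ with ${\sf F}_j([p])={\sf F}_j([q])$, that is, there is a natural isomorphism $p^\bullet\circ j^\bullet\cong q^\bullet\circ j^\bullet$ of functors $\ta_1\rightarrow\set$. I would compose this isomorphism on the right with the direct image $j_\bullet:\ta\rightarrow\ta_1$ to obtain $p^\bullet\circ j^\bullet\circ j_\bullet\cong q^\bullet\circ j^\bullet\circ j_\bullet$. The key input is the fact recalled in the discussion of localisations above: because $j^\bullet$ is a localisation of categories (equivalently, because the direct image $j_\bullet$ is full and faithful), the counit $\epsilon:j^\bullet\circ j_\bullet\rightarrow\id_\ta$ is an isomorphism. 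Whiskering $\epsilon$ with $p^\bullet$ and with $q^\bullet$ produces natural isomorphisms $p^\bullet\circ j^\bullet\circ j_\bullet\cong p^\bullet$ and $q^\bullet\circ j^\bullet\circ j_\bullet\cong q^\bullet$. Chaining the three isomorphisms yields $p^\bullet\cong q^\bullet$, hence $[p]=[q]$, which is precisely the injectivity of ${\sf F}_j$.

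I expect no serious obstacle here: the argument is entirely formal, and the counit isomorphism does all the work of reconstructing a point of the localisation from its image. The only matters requiring care are tracking the variance (the inverse image of the composite $j\circ p$ is the composite $p^\bullet\circ j^\bullet$, taken in the opposite order) and the observation that isomorphism classes of points coincide with isomorphism classes of their inverse image functors, so that it suffices to argue at the level of these functors.
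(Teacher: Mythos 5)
Your proof is correct and is essentially the paper's own argument: the paper likewise takes the isomorphism $p^\bullet j^\bullet\cong q^\bullet j^\bullet$, whiskers it with $j_\bullet$ to get $p^\bullet j^\bullet j_\bullet\cong q^\bullet j^\bullet j_\bullet$, and concludes via the counit isomorphism $j^\bullet j_\bullet\simeq\id_{\ta}$ coming from $j_\bullet$ being full and faithful. The extra bookkeeping you include (variance of the composite and the identification of isomorphism classes of points with isomorphism classes of inverse image functors) is left implicit in the paper but is exactly right.
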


\begin{proof} Take two points $p=(p^\bullet,p_\bullet)$ and $q=(q^\bullet,q_\bullet)$ of the topos $\ta$. Assume there exist an isomorphism of functors $\xi:p^\bullet j^\bullet\rightarrow q^\bullet j^\bullet$. It induces an isomorphism $p^\bullet j^\bullet j_\bullet\simeq q^\bullet j^\bullet j_\bullet$. The result follows since $j^\bullet j_\bullet\simeq \id$.
\end{proof}

\subsection{Gluing}

Let $\ta_i$, $i=0,1,2$ be topoi and $j^\bullet_i:\ta_i\rightarrow \ta_0$, $i=1,2$ localisations of categories. The \emph{gluing} (compare with \cite[p. 439]{ab}) of $\ta_1$ and ${\ta}_2$, along ${\ta}_0$, is the category $\ta$, defined as follows: Its objects are triples $(A_1,A_2,\alpha)$, where $A_i$ is an object of ${\ta}_i$, $i=1,2$ and $\alpha:j^\bullet_1(A_1)\rightarrow j^\bullet_2(A_2)$ is an isomorphism in ${\ta}_0$. A morphism $(A_1,A_2,\alpha)\rightarrow (B_1,B_2,\beta)$ in ${\ta}$ is a pair $(\xi_1,\xi_2)$, where $\xi_i:A_i\rightarrow B_i$, is a morphism in ${\ta}_i$, $i=1,2$, such that $j^\bullet(\xi_2)\alpha=\beta j^\bullet(\xi_1)$. 
We have the following diagram
$$\xymatrix{ {\ta}\ar[r]^{\pi_1 ^\bullet}\ar[d]_{\pi_2^\bullet} & {\ta}_1\ar[d]^{j^\bullet_1}\\
			 {\ta}_2\ar[r]_{j^\bullet_2} & {\ta}_0, }$$
where the projection $\pi_i^\bullet:{\ta}\rightarrow {\ta}_i$ is given by $\pi_i^\bullet(A_1,A_2,\alpha)=A_i$, $i=1,2$. 

\subsection{Locally $s$-Noetherian topoi} \label{s-noetherian}

We aim to introduce a topos theoretical analogue of the classical locally noetherian abelian categories, see \cite{ab}, called locally $s$-noetherian topoi. We will prove that gluing of such topoi is again a locally $s$-noetherian topos.  Most of the material discussed in this section is a topos theoretical analogue of the technique developed by Gabriel in \cite[Chapter 4]{ab}. 

Recall that an object in abelian category is noetherian if it satisfies the ascending chain condition for its subobjects. A basic but fundamental property of abelian categories is the one to one correspondence between subobjects and quotients. Hence, noetheriannes can be equivalently defined in terms of quotient objects.

This is no longer true for topoi as there are congruences that are not induced by subobjects. As such, there are two non-equivalent analogues of a noetherian object in topoi. One is based on subobjects and the other on quotients. We shall only study the first concept in this paper. We will refer to them as $s$-noetherian objects to make this distinction clear. 

To summarise, an object $A$ of a topos ${\ta}$ is called \emph{$s$-noetherian} if for any ascending sequence of subobjects
$$A_1\subseteq A_2\subseteq \cdots \subseteq A$$
of $A$, there is an integer $n$, such that $A_k= A_{k+1}$ for all $k\geq n$. 

\begin{Le} \label{snoto}
\begin{itemize}
\item[i)]  An object $A$ of a topos $\ta$ is $s$-noetherian if and only if any family of subobjects $(B_\lambda\subseteq A)_{\lambda\in \Lambda}$ has a maximal element.
\item[ii)] Let $f:A\rightarrow B$ be a monomorphism in $\ta$. If $B$ is $s$-noetherian, then $A$ is $s$-noetherian.
\item[ iii)]  Let $f:A\rightarrow B$ be an epimorphism in $\ta$. If $A$ is $s$-noetherian, then $B$ is $s$-noetherian.
\item[iv)] If $A$ and $B$ are $s$-noetherian, then $A\coprod B$ is $s$-noetherian.
\item [v)] If $A$ and $B$ are $s$-noetherian subobjects of $C$, then $A\cup B$ is an $s$-noetherian subobject of $C$.
\end{itemize}
\end{Le}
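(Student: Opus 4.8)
The plan is to reduce every item to the order-theoretic behaviour of the subobject lattices $\mathrm{Sub}(-)$ together with the exactness properties of a topos encoded in the Giraud axioms, and to prove the parts in the order (i), (ii), (iii), (iv), (v), deriving (v) from (iv) and (iii). Being $s$-noetherian depends only on the isomorphism class of an object, so throughout I may freely replace subobjects by their domains.

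First I would establish (i), the equivalence of the ascending chain condition with the maximal condition on $\mathrm{Sub}(A)$. One implication is immediate: if every family of subobjects has a maximal element, then the set of terms of an ascending chain $A_1\subseteq A_2\subseteq\cdots$ has a maximal element $A_n$, and $A_n\subseteq A_k$ with $A_n$ maximal forces $A_k=A_n$ for all $k\geq n$. For the converse I would argue by contraposition: given a family $(B_\lambda)_{\lambda\in\Lambda}$ with no maximal element, dependent choice produces a strictly increasing sequence $B_{\lambda_1}\subsetneq B_{\lambda_2}\subsetneq\cdots$, contradicting the ascending chain condition.

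For (ii) and (iii) I would exhibit, in each case, an injective order-preserving map of subobject posets and transport chains along it. A monomorphism $f\colon A\rightarrow B$ induces, by post-composition, a map $\mathrm{Sub}(A)\rightarrow\mathrm{Sub}(B)$: the composite of $f$ with a subobject inclusion is again monic, and since $f$ is monic this assignment is injective and order-preserving, so a chain in $A$ maps to a chain in $B$, which stabilises, forcing the original to stabilise. For an epimorphism $f\colon A\rightarrow B$ I would instead pull back: axiom (G1) gives pullbacks, and pulling back a mono yields a mono, so $f^{\ast}\colon\mathrm{Sub}(B)\rightarrow\mathrm{Sub}(A)$ is defined and order-preserving. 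Its injectivity is the key topos-theoretic input: every epimorphism in a topos is effective, hence a regular epimorphism, and for such $f$ the subobject $B'$ is recovered as the image of the composite $f^{\ast}(B')\hookrightarrow A\xrightarrow{\,f\,}B$; thus $f^{\ast}$ is injective and the ascending chain condition on $A$ descends to $B$.

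For (iv) I would use that coproducts in a topos are disjoint (G3) and universal, since colimits commute with pullbacks (G2). These extensivity properties give, for any subobject $S\subseteq A\coprod B$, a canonical decomposition $S\cong (S\times_{A\coprod B}A)\coprod(S\times_{A\coprod B}B)$, hence an isomorphism of posets $\mathrm{Sub}(A\coprod B)\cong\mathrm{Sub}(A)\times\mathrm{Sub}(B)$; as a product of two posets satisfying the ascending chain condition again satisfies it, $A\coprod B$ is $s$-noetherian. Finally (v) follows formally: for $A,B\subseteq C$ the union $A\cup B$ is by definition the image of the morphism $A\coprod B\rightarrow C$ induced by the two inclusions, so the canonical map $A\coprod B\twoheadrightarrow A\cup B$ is an epimorphism; since $A\coprod B$ is $s$-noetherian by (iv), part (iii) yields that $A\cup B$ is $s$-noetherian. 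I expect the main obstacle to be precisely the two topos-specific inputs—the injectivity of $f^{\ast}$ on subobjects for an epimorphism, and the extensive splitting of subobjects of a coproduct—whereas the remaining arguments are formal manipulations with chains in a lattice.
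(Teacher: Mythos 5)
Your proposal is correct and follows essentially the same route as the paper's proof: the same chain/maximality argument for (i), transporting chains along postcomposition for (ii) and along pullback for (iii) (your regular-epi justification is exactly the fact behind the paper's identity $ff^{-1}(B_k)=B_k$), the splitting $C_k=(C_k\cap A)\coprod(C_k\cap B)$ for (iv), and deducing (v) from (iii) and (iv). The only difference is that you spell out the topos-theoretic inputs (pullback stability of epimorphisms, extensivity) that the paper leaves implicit.
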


\begin{proof} i) Assume $A$ is $s$-noetherian and the family $(B_\lambda)$ does not contain a maximal element. Choose $\lambda_1\in \Lambda$ and consider
the corresponding subobject $B_{\lambda_1}$. It is not maximal by our assumption.  Thus, there exists $\lambda_2\in \Lambda$ with $B_{\lambda_1}\subsetneq B_{\lambda_2}$. We obtain a non-stabilising, ascending sequence of subobjects of $A$. This contradicts our assumption.

Conversely, assume the maximality condition holds and let $A\in\ta$ be an object. Consider any ascending sequence of subobjects
$$A_1\subseteq A_2\subseteq \cdots \subseteq A.$$
This family has a maximal element by assumption. Call it $A_n$. We have $A_k\subseteq A_n$ for all $k$. Thus $A$ is $s$-noetherian since $A_n=A_{n+1}=\cdots$.

ii) This is trivial since any subobject of $A$ is also a subobject of $B$.

iii) Take any ascending sequence of subobjects 
$$B_1\subseteq B_2\subseteq \cdots \subseteq B.$$
By assumption, there exists a natural number $n$, such that $f^{-1}(B_n)=f^{-1}(B_{n+1})=\cdots$. We have $ff^{-1}(B_k)=B_k$ since $f$ is an epimorphism. The result follows.

iv) Let
$$C_1\subseteq C_2\subseteq \cdots \subseteq C=A\coprod B$$
be any ascending sequence of subobjects of $A\coprod B$. Set $A_i=C_i\cap A$ and $B_i=C_i\cap B$. Both $(A_i)$ and $(B_i)$ form ascending sequences of subobjects of $A$ and $B$ respectively. As such, they stabilise. It follows that  $A_n=A_{n+1}=\cdots$ and $B_n=B_{n+1}=\cdots$ for some $n$. Since $C_k=A_k\coprod B_k$, one obtains $C_n=C_{n+1}=\cdots$ and the result follows.

v) The last statement follows from iv) and iii).
\end{proof}

\begin{De} A topos ${\ta}$ is said to be \emph{locally $s$-noetherian}, provided it posses a family of $s$-noetherian generators.
\end{De}

\begin{Rem} The product of two $s$-noetherian objects  in a locally $s$-noetherian topos
is not $s$-noetherian in general, see Example \ref{prnoe} below.
\end{Rem}

\begin{Le}\label{nounion} Let $\ta$ be a locally $s$-noetherian topos. One has $A=\bigcup_NN$ for any object $A$ of $\ta$. Here, $N$ runs through all the $s$-noetherian subobjects of $A$.
\end{Le}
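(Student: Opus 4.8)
The plan is to deduce everything from the generation principle of Lemma \ref{321}. Write $B=\bigcup_N N\subseteq A$ for the union of all $s$-noetherian subobjects of $A$. Since $\ta$ is a topos, its subobjects are well-powered and the poset of subobjects of $A$ is a complete lattice; hence this (set-indexed) union exists as a genuine subobject of $A$, and by construction $N\subseteq B$ for every $s$-noetherian subobject $N$ of $A$. By Lemma \ref{321}, in order to conclude $B=A$ it suffices to check that every morphism $f:G_i\to A$ out of a generator factors through $B$.

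So fix such a morphism $f:G_i\to A$. Being a morphism in a topos, $f$ admits an image factorisation
$$G_i\xrightarrow{\ e\ }\im(f)\xrightarrow{\ m\ }A,$$
with $e$ an epimorphism and $m$ a monomorphism. Because $\ta$ is locally $s$-noetherian, the generator $G_i$ is $s$-noetherian, and therefore its quotient $\im(f)$ is $s$-noetherian by Lemma \ref{snoto} iii). Thus $\im(f)$ is one of the subobjects $N$ entering the union defining $B$, so $\im(f)\subseteq B$ and $f=m\circ e$ factors through $B$. Applying Lemma \ref{321} now yields $B=A$, i.e.\ $A=\bigcup_N N$, as claimed.

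The only steps that genuinely use the ambient topos structure, and hence the points I would justify with care, are the two standing facts about $\ta$ invoked above: that the subobjects of $A$ form a complete lattice (so that the possibly large union $\bigcup_N N$ is again a subobject), and that every morphism factors as an epimorphism followed by a monomorphism. Both are standard consequences of the Giraud axioms (G1)--(G5), but they are where the argument leaves the purely formal manipulation of subobjects. Once they are granted, the proof is simply the combination of the epimorphic-image part of Lemma \ref{snoto} with Lemma \ref{321}, and requires no further computation.
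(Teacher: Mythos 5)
Your proof is correct and follows essentially the same route as the paper: define $B=\bigcup_N N$, observe that the image of any $\alpha:G_i\rightarrow A$ is an $s$-noetherian subobject (hence contained in $B$), and conclude $B=A$ via Lemma \ref{321}. In fact your citation of Lemma \ref{snoto} iii) for the image step is the right one — the paper cites part iv), which concerns coproducts and appears to be a typo — and your extra remarks on the existence of the union and the image factorisation are sound, standard justifications.
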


\begin{proof} We set $B=\bigcup_NN$, where $N$ runs through all the $s$-noetherian subobjects of $A$. It is clear that $B\subseteq A$. Take any noetherian generator $G_i$ and a morphism $\alpha:G_i\rightarrow A$. We know from Lemma \ref{snoto}, part iv) that $\im(\alpha)$ is an $s$-noetherian subobject of $A$. Hence, $\im(\alpha)\subseteq B$ and we obtain $B=A$ by Lemma \ref{321}.
\end{proof}

\begin{Le} Let $\ta$ be a locally $s$-noetherian topos and $(G_s)_{s\in S}$ a family of $s$-noetherian generators. Any $s$-noetherian object is a quotient of a finite coproduct of $G_s$, $s\in S$.
\end{Le}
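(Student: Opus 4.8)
The plan is to first exhibit $A$ as a finite union of images of the generators, and then to deduce that the canonical map out of the corresponding finite coproduct is an epimorphism. Throughout I will use that in a topos every morphism factors as an epimorphism followed by a monomorphism, so that for any $\alpha\colon G_s\rightarrow A$ the image $\im(\alpha)$ is a well-defined subobject of $A$ which is a quotient of $G_s$; by Lemma \ref{snoto} iii) it is therefore $s$-noetherian.

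Consider the family $\mathscr{B}$ of all subobjects of $A$ of the form $\im(\alpha_1)\cup\cdots\cup\im(\alpha_n)$, where $n\geq 0$ and each $\alpha_k\colon G_{s_k}\rightarrow A$ is a morphism from some generator. By Lemma \ref{snoto} v) each such finite union is an $s$-noetherian subobject of $A$, and $\mathscr{B}$ is nonempty since it contains the empty union $\emptyset$. Since $A$ is $s$-noetherian, Lemma \ref{snoto} i) provides a maximal element $B=\im(\alpha_1)\cup\cdots\cup\im(\alpha_n)\in\mathscr{B}$. I claim $B=A$. Indeed, given any generator $G_s$ and any $\gamma\colon G_s\rightarrow A$, the subobject $B\cup\im(\gamma)$ again lies in $\mathscr{B}$ and contains $B$, so maximality forces $\im(\gamma)\subseteq B$; hence $\gamma$ factors through $B$. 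As this holds for every morphism from a generator, Lemma \ref{321} yields $B=A$.

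It remains to pass from this union to a quotient. The morphisms $\alpha_1,\dots,\alpha_n$ assemble into a single morphism $\varphi\colon G_{s_1}\coprod\cdots\coprod G_{s_n}\rightarrow A$ out of the finite coproduct. Factoring each $\alpha_k$ as an epimorphism onto $\im(\alpha_k)$ followed by its inclusion, $\varphi$ factors through $\coprod_{k=1}^n\im(\alpha_k)\rightarrow A$ via a coproduct of epimorphisms, which is again an epimorphism. Since the join $\bigcup_{k=1}^n\im(\alpha_k)$ is by definition the image of this latter map, and since colimits are universal in a topos (axiom G2), the image of $\varphi$ is exactly $\im(\alpha_1)\cup\cdots\cup\im(\alpha_n)=B=A$. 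Thus $\varphi$ is an epimorphism and $A$ is a quotient of the finite coproduct $\coprod_{k=1}^n G_{s_k}$, as desired.

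The step I expect to require the most care is the identification of $\im(\varphi)$ with the union of the images $\im(\alpha_k)$: this is where the topos axioms genuinely enter, through the stability of colimits under pullback and the resulting fact that a coproduct of epimorphisms is an epimorphism, whereas the maximal-element argument is a purely formal consequence of $s$-noetheriannes. One should also check the trivial case: if $A$ admits no morphism from any generator, the vacuous version of Lemma \ref{321} already forces $A=\emptyset$, which is the quotient of the empty coproduct.
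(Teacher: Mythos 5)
Your proof is correct and follows essentially the same route as the paper: both take the family of subobjects of $A$ that are quotients of finite coproducts of generators (your finite unions of images are exactly these), extract a maximal element via $s$-noetheriannes, and invoke Lemma \ref{321} to conclude it equals $A$. The only difference is cosmetic --- the paper reaches the conclusion by contradiction (a non-factoring $\beta\colon G_i\rightarrow N$ would enlarge the maximal subobject), whereas you argue directly that maximality forces every morphism from a generator to factor through $B$; your extra verification that $\im(\varphi)$ equals the union of the $\im(\alpha_k)$ is a welcome explicit justification of a step the paper uses implicitly.
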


\begin{proof} Let $N$ be an $s$-noetherian object in $\ta$. Consider the collection of all subobjects of $N$, which are quotients of finite coproducts of the $G_s$-s. This collection contains a maximal subobject $M\subseteq N$ thanks to Lemma \ref{snoto}, since $N$ is $s$-noetherian. Note that it does not need to be unique. Hence, there is a finite subset $R\subseteq S$ with an epimorphism $\alpha:\coprod_{t\in R} G_t\rightarrow M$. We claim that $M=N$.

If this is not the case, there exists a morphism $\beta:G_i\rightarrow N$ which does not factor trough $M$. This follows from Lemma \ref{321}. We obtain a map $f=(\alpha,\beta): (\coprod_{t\in R} G_t)\coprod G_i\rightarrow N$ such that $M\subseteq \im(f)$ is a proper inclusion. This contradicts the maximality of $M$, proving our result.
\end{proof}

\begin{Co} The isomorphism classes of $s$-noetherian objects in a locally $s$-noetherian topos form a set.
\end{Co}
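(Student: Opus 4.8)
The plan is to bound every $s$-noetherian object, up to isomorphism, by the quotients of a finite coproduct of generators, and then to use the topos structure of $\ta$ to see that these quotients form a set.

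First I would fix a family $(G_s)_{s\in S}$ of $s$-noetherian generators, which exists since $\ta$ is locally $s$-noetherian. By the preceding Lemma, every $s$-noetherian object $N$ receives an epimorphism $\coprod_{t\in R}G_t\to N$ for some finite subset $R\subseteq S$. The finite subsets of $S$ form a set, so it is enough to prove that, for each fixed object $P$ of $\ta$, the quotient objects of $P$ (isomorphism classes of epimorphisms with domain $P$) form a set. Granting this, the disjoint union over all finite $R\subseteq S$ of the sets of quotient objects of $\coprod_{t\in R}G_t$ is again a set, and every $s$-noetherian object is isomorphic to an element of this union; hence the isomorphism classes of $s$-noetherian objects form a set.

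The key step is therefore the claim that the quotient objects of a fixed $P\in\ta$ form a set, and here I would exploit the defining features of a topos. Since $\ta$ is a topos, every epimorphism $q:P\to Q$ is the coequaliser of its kernel pair $P\times_Q P\rightrightarrows P$, which is an equivalence relation on $P$; conversely, every equivalence relation on $P$ is effective, i.e.\ it is the kernel pair of its own coequaliser. Passing to kernel pairs thus sets up a bijection between the quotient objects of $P$ and the equivalence relations on $P$. An equivalence relation on $P$ is in particular a subobject of $P\times P$, and subobjects of any object $A$ are classified by the subobject classifier, being in bijection with $\Hom(A,\oO)$, which is a set. Consequently the equivalence relations on $P$ form a set, and so do the quotient objects of $P$.

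The main obstacle is precisely this set-theoretic boundedness: in an arbitrary category the quotients of an object need not form a set, so the topos axioms---the existence of $\oO$ together with the exactness of $\ta$ (effectivity of equivalence relations)---are exactly what is needed to make the argument go through. Once the boundedness of quotient objects is established, the corollary follows by assembling the set-indexed union of sets described above.
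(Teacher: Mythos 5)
Your proposal is correct and follows exactly the route the paper intends: the corollary is stated immediately after the lemma that every $s$-noetherian object is a quotient of a finite coproduct of the generators $G_s$, and the paper leaves implicit precisely the bookkeeping you spell out (finite subsets of $S$ form a set, and quotient objects of a fixed object form a set via kernel pairs and the subobject classifier). Your only extra claim — that every equivalence relation is effective — is true in a Grothendieck topos but not even needed, since injectivity of the passage from quotients to kernel pairs already follows from Giraud's axiom (G4).
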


\begin{Le} \label{233} Let $\ta_1$ be a locally $s$-noetherian topos and  $j^\bullet:\ta_1\rightarrow \ta_0$ a localisation of categories.
\begin{itemize}
\item[i)] The topos $\ta_0$ is locally $s$-noetherian.
\item[ii)] Let $B$ be any object of $\ta_1$ and $A\subseteq j^\bullet (B)$ any $s$-noetherian subobject. There is an $s$-noetherian subobject $N\subseteq B$, such that $j^\bullet (N)= A$.
\end{itemize}
\end{Le}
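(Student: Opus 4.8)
The plan is to reduce both statements to a single subobject-lifting construction. Given any object $B$ of $\ta_1$ and a subobject $A\hookrightarrow j^\bullet B$ in $\ta_0$, I would define $N\subseteq B$ as the pullback $N:=B\times_{j_\bullet j^\bullet B}j_\bullet A$ of the monomorphism $j_\bullet A\hookrightarrow j_\bullet j^\bullet B$ (a mono since the right adjoint $j_\bullet$ preserves limits) along the unit $\eta_B\colon B\to j_\bullet j^\bullet B$. Since $j^\bullet$ preserves finite limits, applying it to this square presents $j^\bullet N$ as the pullback of $j^\bullet j_\bullet A\hookrightarrow j^\bullet j_\bullet j^\bullet B$ along $j^\bullet\eta_B$. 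Using that the counit $\varepsilon\colon j^\bullet j_\bullet\to\id$ is an isomorphism, together with naturality of $\varepsilon$ and the triangle identity $\varepsilon_{j^\bullet B}\circ j^\bullet\eta_B=\id$, this square is identified with the pullback of $A\hookrightarrow j^\bullet B$ along $\id_{j^\bullet B}$; hence $j^\bullet N=A$ as subobjects of $j^\bullet B$. As $j_\bullet$ preserves monomorphisms and $j^\bullet$ is left exact, the assignment $A\mapsto N$ is order-preserving, and the identity $j^\bullet N=A$ shows it is order-reflecting; thus it is an order-embedding of the poset of subobjects of $j^\bullet B$ into that of $B$.

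For part i), I would first check that $(j^\bullet G_s)_{s\in S}$ generates $\ta_0$. Given parallel arrows $u\neq v\colon X\to Y$ in $\ta_0$, faithfulness of $j_\bullet$ gives $j_\bullet u\neq j_\bullet v$, so some generator admits $b\colon G_s\to j_\bullet X$ with $(j_\bullet u)b\neq(j_\bullet v)b$; transposing $b$ across the adjunction to $a\colon j^\bullet G_s\to X$ and using that the adjunction bijection sends $ua,va$ to $(j_\bullet u)b,(j_\bullet v)b$ forces $ua\neq va$. Each $j^\bullet G_s$ is moreover $s$-noetherian: an ascending chain of subobjects of $j^\bullet G_s$ lifts, via the order-embedding above with $B=G_s$, to an ascending chain of subobjects of the $s$-noetherian object $G_s$, which stabilises; applying $j^\bullet$ shows the original chain stabilises. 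Hence $(j^\bullet G_s)_{s\in S}$ is a family of $s$-noetherian generators and $\ta_0$ is locally $s$-noetherian.

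For part ii), I would begin from the lift $N_0\subseteq B$ with $j^\bullet N_0=A$; as $N_0$ need not be $s$-noetherian it must be shrunk. By Lemma \ref{nounion} applied in $\ta_1$ one has $N_0=\bigcup_{N'}N'$, where $N'$ ranges over the $s$-noetherian subobjects of $N_0$. Since $j^\bullet$ preserves colimits and monomorphisms it preserves unions, so $A=j^\bullet N_0=\bigcup_{N'}j^\bullet N'$. This family is directed: for two such $N'_1,N'_2$ the union $N'_1\cup N'_2$ is again an $s$-noetherian subobject of $N_0$ by Lemma \ref{snoto}(v), and $j^\bullet(N'_1\cup N'_2)=j^\bullet N'_1\cup j^\bullet N'_2$. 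As $A$ is $s$-noetherian, the family $\{j^\bullet N'\}$ of subobjects of $A$ has a maximal element $j^\bullet N'_0$ by Lemma \ref{snoto}(i); directedness together with maximality force every $j^\bullet N'$ to be contained in $j^\bullet N'_0$, whence $j^\bullet N'_0=\bigcup_{N'}j^\bullet N'=A$. Taking $N:=N'_0\subseteq N_0\subseteq B$ yields an $s$-noetherian subobject of $B$ with $j^\bullet N=A$.

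The main obstacle is the first paragraph: setting up $N=\eta_B^{-1}(j_\bullet A)$ and verifying $j^\bullet N=A$ exactly, which requires careful bookkeeping of the unit, counit and triangle identities of the localisation rather than a routine diagram chase. Once this order-embedding of subobject lattices is established, part i) is immediate, and part ii) needs only the directedness argument powered by Lemmas \ref{nounion} and \ref{snoto}.
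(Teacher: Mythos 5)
Your proof is correct and is essentially the paper's own argument: your pullback $N=\eta_B^{-1}(j_\bullet A)$ is exactly the paper's $\xi_B^{-1}(j_\bullet A)$, your generator and chain-lifting claims in part i) match the paper's second and third claims, and your part ii) rests on the same two ingredients, Lemma \ref{nounion} and Lemma \ref{snoto} (parts i and v). The only (cosmetic) difference is in part ii), where you extract a maximal element of the directed family $\{j^\bullet N'\}$ and absorb every member into it, while the paper takes a maximal liftable subobject $Y'\subseteq A$ and reaches a contradiction via Lemma \ref{nounion}; these are the same maximality argument in different clothing.
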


\begin{proof} i) Denote by $\xi:Id_{\ta_1}\rightarrow j_\bullet j^\bullet$ the unit of the adjoint pair $(j^\bullet,j_\bullet)$. Let $X$ be an object of $\ta_1$. It follows from the general properties of adjoint functors that we have a morphism $\xi_X:X\rightarrow j_\bullet j^\bullet X$ in $\ta_1$	satisfying the following: For any object $Y$ of $\ta_0$ and any morphism $\alpha: X\rightarrow j_\bullet Y$, there is a unique morphism $\eta: j^\bullet X\rightarrow Y$ in $\ta_0$, such that $\alpha= j_\bullet(\eta)\xi_X$. We refer to this property as the universality of $\xi$.

The counit $j^*j_*\rightarrow Id_{\ta_0}$ is an isomorphism since $j_*$ is full and faithful. This allows us to identify $j^\bullet j_\bullet Y$ with $Y$ for any object $Y$ of $\ta_0$.

Take an object $N\in \ta_1$, any subobject $Y\subseteq j^\bullet(N)$ and consider the following pullback diagram in $\ta_1$:
$$\xymatrix{ \xi_N^{-1} (Y)= X\ar[r]^i\ar[d] & N\ar[d]^{\xi_N}\\ j_\bullet (Y)\ar[r] & j_\bullet j^\bullet (N). }$$
Our first claim is that one has $j^\bullet (X)=Y$. We show this by applying $j^\bullet $ to the above diagram to obtain the following commutative diagram:
$$\xymatrix{ j^\bullet (X)\ar[r]^{j^\bullet (i)}\ar[d] & j^\bullet (N)\ar[d]^\id\\ j^\bullet (j_\bullet (Y))\ar[r] & j^\bullet (j_\bullet j^\bullet (N))=j^\bullet(N). }$$
Since $j^\bullet $ respects all finite limits, this is in fact a pull-back diagram. We observe that the right vertical arrow is an isomorphism. It follows that the left vertical arrow $j^\bullet (X)\rightarrow j^\bullet j_\bullet Y=Y$ is an isomorphism as well since this is a pullback diagram. This proves the first claim.

Let $(G_s)_{s\in S}$ be a family of generators of $\ta_1$. Our second claim is that $(j^\bullet (G_s))_{s\in S}$ are generators of $\ta_0$. Take two morphisms $\alpha,\beta:A\rightarrow B$ in $\ta_0$ and assume $\alpha\not=\beta$. Since $j_\bullet $ is full and faithful, $j_\bullet (\alpha)\not=j_\bullet (\beta)$. The collection $\{G_s\}_{s\in S}$ is a family of generators, which means that there exists a morphism $\gamma:G_s\rightarrow j_\bullet A$ such that $j_\bullet (\alpha)\gamma\not=j_\bullet (\beta)\gamma$. We have $\gamma=j_\bullet (\delta)\xi_{G_s}$ for some $\delta:j^\bullet (G_s)\rightarrow A$, by the universality property of $\xi$. Thus, $j_\bullet (\alpha\delta)\xi_{G_s}\not=j_\bullet (\beta\delta)\xi_{G_s}$ and hence, $\alpha\delta\not=\beta\delta$. This proves the second claim.

Our third claim is that the image $j^\bullet (N)$ of an $s$-noetherian object $N\in\ta_1$ is an $s$-noetherian object in $\ta_0$. We consider the sequence of subobjects
$$A_1\subseteq A_2\subseteq \cdots \subseteq j^\bullet (N)$$
in $\ta_0$. Consider the corresponding sequence of subobjects of $N$ in $\ta_1$
$$\xi^{-1}_N(j_\bullet (A_1))\subseteq \xi_N^{-1}(j_\bullet (A_2))\subseteq \cdots\subseteq N.$$
Since $N$ is $s$-noetherian, there exist $n\geq 1$ such that $\xi^{-1}_N(j_\bullet (A_i))=\xi^{-1}_N(j_\bullet (A_{i+1}))$, for all $i\geq n$.  It follows from the first claim that $A_i=A_{i+1}$ as well. Thus, $j^\bullet (N)$ is $s$-noetherian. This, in conjunction with the second claim, implies the result.

ii) Take any object $B$ of $\ta_1$ and any $s$-noetherian subobject $A\subseteq j^\bullet (B)$. Consider subobjects $Y\subseteq A$ for which there exists an $s$-noetherian subobject $X\subseteq \xi_B^{-1}(j_\bullet A)$, such that $j^\bullet (X)=Y$. This family has a maximal object $Y'$ since $A$ is $s$-noetherian by i) Lemma \ref{snoto}. So, $Y'=j^\bullet (X')$ for an $s$-noetherian subobject $X'\subseteq \xi_B^{-1}(j_\bullet A)$. We have to prove that $Y'=A$.

Since $j_\bullet$ is full and faithful, it suffice to show that $j_\bullet Y'=j_\bullet A$. Assume it is not. Then $\xi^{-1}_Bj_\bullet (Y')\not=\xi^{-1}_B(j_\bullet (A))$ and by Lemma \ref{nounion}, there exists an $s$-noetherian subobject $Z\subseteq \xi^{-1}(j_*(A))$, such that $Z$ does not lie in $\xi^{-1}(j_\bullet (Y'))$. We set $W=Z\cup X'\subseteq \xi^{-1}(j_\bullet (A))$, which is $s$-noetherian by v) Lemma \ref{snoto}. It follows that $j^\bullet (W)$ is also a member of the above collection, which contradicts the maximality of $Y'$. This shows that $Y'=A$ and the lemma follows.
\end{proof}

\subsection{Gluing of locally $s$-noetherian topoi} \label{gluing s-noetherian}

\begin{Th}\label{Int} Assume $\ta_1$ and $\ta_2$ are locally $s$-noetherian topoi and the category $\ta$ is the gluing of $\ta_1$ and $\ta_2$ along the topos $\ta_0$:
$$\xymatrix{ {\ta}\ar[r]^{\pi^\bullet_1}\ar[d]_{\pi^\bullet_2} & {\ta}_1\ar[d]^{j^\bullet_1}\\
			 {\ta}_2\ar[r]_{j^\bullet_2} & {\ta}_0. }$$
Here, $j_1^\bullet$ and $j_2^\bullet$ are localisations of categories. Then $\ta$ and $\ta_0$ are locally $s$-noetherian topoi and the functors $\pi_i^\bullet$, $i=1,2$ are localisations.
\end{Th}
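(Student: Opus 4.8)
The plan is to handle the three assertions separately, with the claim that $\ta$ is locally $s$-noetherian carrying all the weight. That $\ta_0$ is locally $s$-noetherian is immediate from Lemma \ref{233} i), applied to the localisation $j^\bullet_1\colon\ta_1\to\ta_0$ with $\ta_1$ locally $s$-noetherian. To make sense of the remaining claims I would first record that all finite limits and all colimits in $\ta$ are computed componentwise: given a diagram of triples one forms the componentwise (co)limits in $\ta_1$ and $\ta_2$ and transports the structure isomorphism through the canonical comparison maps, which are isomorphisms exactly because each $j^\bullet_i$ preserves finite limits and all colimits. Giraud's axioms G1--G4 for $\ta$ then reduce to the same axioms in $\ta_1$ and $\ta_2$, checked componentwise; in particular a morphism of triples is a monomorphism (resp.\ epimorphism) if and only if both of its components are, so a subobject of $(A_1,A_2,\alpha)$ is a pair of subobjects $A'_i\subseteq A_i$ for which $\alpha$ restricts to an isomorphism $j^\bullet_1(A'_1)\xrightarrow{\sim}j^\bullet_2(A'_2)$. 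Only G5 will remain, and it is subsumed in the construction of generators below.

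For the functors $\pi^\bullet_i$ I would exhibit explicit right adjoints. Let $j_{2\bullet}$ be the direct image of $j^\bullet_2$ and $\varepsilon$ its counit, an isomorphism since $j_{2\bullet}$ is full and faithful. Setting $\pi_{1\bullet}(A_1)=(A_1,\,j_{2\bullet}j^\bullet_1 A_1,\,\varepsilon^{-1})$, a short adjunction computation gives $\pi^\bullet_1\dashv\pi_{1\bullet}$, and since $\pi^\bullet_1\pi_{1\bullet}=\id_{\ta_1}$ the direct image $\pi_{1\bullet}$ is full and faithful. As $\pi^\bullet_1$ preserves finite limits and all colimits (they are componentwise), it is a localisation of categories; the case $i=2$ is symmetric.

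The heart of the argument is producing $s$-noetherian generators of $\ta$. I would take all triples $(P_1,P_2,\gamma)$ with $P_1$ and $P_2$ $s$-noetherian in $\ta_1$ and $\ta_2$; by the Corollary above, applied to $\ta_1$ and $\ta_2$, the components range over sets of isomorphism classes and the gluing datum $\gamma$ over a hom-set, so these triples form a set up to isomorphism, and each is $s$-noetherian in $\ta$, since by the componentwise description of subobjects an ascending chain of subobjects induces ascending chains in $P_1$ and $P_2$ (which stabilise) and each member's structure isomorphism is forced by its components. To see that they generate, let $(Y_1,Y_2,-)\subsetneq(A_1,A_2,\alpha)$ be a proper subobject, say with $Y_1\subsetneq A_1$. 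By Lemma \ref{nounion}, $A_1$ is the union of its $s$-noetherian subobjects, so some $s$-noetherian $N_1\subseteq A_1$ satisfies $N_1\not\subseteq Y_1$. The inclusion $\iota\colon N_1\hookrightarrow A_1$ is a monomorphism, hence so is $\alpha\,j^\bullet_1(\iota)\colon j^\bullet_1 N_1\to j^\bullet_2 A_2$; its image is an $s$-noetherian subobject of $j^\bullet_2 A_2$, being isomorphic to $j^\bullet_1 N_1$ and so $s$-noetherian because $j^\bullet_1$ preserves $s$-noetherian objects (as in the proof of Lemma \ref{233}). Lemma \ref{233} ii) then lifts this image to an $s$-noetherian subobject $P_2\subseteq A_2$ with $j^\bullet_2 P_2$ isomorphic to $j^\bullet_1 N_1$, which furnishes a structure isomorphism $\gamma$ and a morphism $(N_1,P_2,\gamma)\to(A_1,A_2,\alpha)$ whose first component is $\iota$. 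Since $\iota$ does not factor through $Y_1$, this morphism does not factor through $(Y_1,Y_2,-)$; the case $Y_2\subsetneq A_2$ is symmetric. Applying this to the (componentwise) equaliser of two distinct parallel morphisms, which is a proper subobject, yields axiom G5 and exhibits the chosen family as $s$-noetherian generators, so $\ta$ is a locally $s$-noetherian topos.

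I expect the lifting step to be the only real obstacle. The naive approach of lifting a morphism out of a generator of $\ta_1$ to a morphism of $\ta$ fails, because $j^\bullet_2$ need not be full and so a morphism of $\ta_0$ need not lift to $\ta_2$. The resolution is to work exclusively with monomorphisms: passing to an $s$-noetherian subobject $N_1$ makes $\alpha\,j^\bullet_1(\iota)$ a monomorphism, whence its image is a genuine $s$-noetherian subobject of $j^\bullet_2 A_2$ and Lemma \ref{233} ii) supplies the lift along $j^\bullet_2$. Trading the generator of $\ta_1$ for an $s$-noetherian subobject of $A_1$, made available by Lemma \ref{nounion}, is the manoeuvre that circumvents the failure of fullness.
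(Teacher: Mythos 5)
Your proposal is correct and takes essentially the same route as the paper's own proof: Lemma \ref{233} i) for $\ta_0$, explicit right adjoints $\pi_{\bullet i}$ with $\pi_i^\bullet\pi_{\bullet i}=\id$ to get the localisation claim, the observation that triples with $s$-noetherian components are $s$-noetherian and form a set, and the generation step combining Lemma \ref{nounion} with the lifting of $\alpha(j_1^\bullet(N_1))$ along $j_2^\bullet$ via Lemma \ref{233} ii). The only differences are cosmetic: you keep the counit isomorphism explicit rather than strictifying it, and you spell out why $\alpha(j_1^\bullet(N_1))$ is itself $s$-noetherian (via $j_1^\bullet$ preserving $s$-noetherian objects), a point the paper leaves implicit.
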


\begin{proof} We showed in part i) of Lemma \ref{233} that $\ta_0$ is a locally noetherian topos. One easily sees that $\ta$ posses all colimits and finite limits and that the functors $\pi_i^\bullet$, $i=1,2$ preserve colimits and finite limits. Based on this, one easily checks that the Giraud Axioms G1-G4 hold. To check Axiom G5 we proceed as follows:

The functors $j^\bullet_i:\ta_i\rightarrow \ta_0$, $i=1,2$ have, by assumption, right adjoint functors $j_{i\bullet}:\ta_0\rightarrow \ta_i$, $i=1,2$. We may assume, without loss of generality, that $j^\bullet_ij_{\bullet i}(A_0)=A_0$ for any object $A_0$ of $\ta_0$. Let $\xi_i:Id_{\ta_i}\rightarrow j_{\bullet i}j_i^\bullet$ be the unit of the adjoint pair $(j_i^\bullet,j_{\bullet i})$, $i=1,2$. Define the functors
$$\pi_{\bullet i}:\ta_i\rightarrow \ta, \  \ i=1,2$$
by
$$\pi_{\bullet 1}(A_1)=(A_1, j_{\bullet 2}(j^\bullet_1(A_1)), \id_{j^\bullet _1(A_1)})$$
and 
$$\pi_{\bullet 2}(A_2)=(j_{\bullet 1}(j^\bullet_2(A_2)),A_2, \id_{j^\bullet _2(A_2)}).$$
This makes $\pi_{\bullet i}$ the right adjoint of $\pi^\bullet_i$, for $i=1,2$. The latter are localisations since $\pi^\bullet_i\pi_{\bullet i}=Id_{\ta_i}$, $i=1,2$.

It is obvious that $(A_1,A_2,\alpha)$ is an $s$-noetherian object in $\ta$, provided both $A_1$ and $A_2$ are $s$-noetherian in $\ta_i$. We claim that the converse is also true. We will need the following construction for the proof:

Let $(A_1,A_2,\alpha)$ be an object of $\ta$ and let $X\subseteq A_1$ be a subobject. Then $(X, Y, \gamma)$ is a subobject of $(A_1,A_2,\alpha)$ in $\ta$. Here, $Y=\xi_2^{-1}(\alpha(j^\bullet_1(X)))$ and $\gamma$ is the restriction of $\alpha$ on $j^\bullet_1(X)$. It is clear from this construction that any ascending chain of subobjects of $A_1$ can be lifted as an ascending chain of subobjects of $(A_1,A_2,\alpha)$. Thus, $A_1$ is $s$-noetherian in $\ta_1$ if $(A_1,A_2,\alpha)$ is $s$-noetherian in $\ta$. Similarly for $A_2$. We have proven that an object $(A_1,A_2,\alpha)$ of the category $\ta$ is $s$-noetherian if and only if $A_1$ and $A_2$ are $s$-noetherian in $\ta_1$ and $\ta_2$ respectively. It follows that the isomorphism classes of $s$-noetherian objects of $\ta$ form a set. 

Hence, it suffices to show that $s$-noetherian objects generate $\ta$. Take any object $(A_1,A_2,\alpha)$  in $\ta$ and a strict subobject $(B_1,B_2,\beta)$ of $(A_1,A_2,\alpha)$. Either $B_1\not=A_1$ or $B_2\not= A_2$ and we may assume, without loss of generality, that we are in the first situation. In other words, that there exist an $s$-noetherian subobject $N_1\subseteq A_1$, which does not lie in $B_1$. There exist an $s$-noetherian subobject $N_2\subseteq A_2$ such that $j^\bullet_2(N_2)=\alpha(j^\bullet_1(N_1))$ by part ii) of Lemma \ref{233}. We have constructed an $s$-noetherian subobject $(N_1,N_2,\alpha_{|j^\bullet_1(N_1)})$ of $(A_1,A_2,\alpha)$ which does not factors through $(B_1,B_2,\beta)$. This shows that $s$-noetherian objects generate the category $\ta$, which finishes the proof.
\end{proof}

\section{Monoids and topoi}\label{020804}

This section deals with monoids and sets endowed with a monoid action. We will collect some well-known properties about these objects, which will be used in the forthcoming sections.

\subsection{Monoids and $M$-sets} Let $M$ be a multiplicatively written monoid.

We denote the subgroup of invertible elements of $M$ by $M^\times$. Recall that a subset $\m\subseteq M$ is called a \emph{right ideal} provided $\m M\subseteq \m$. A similar meaning has a \emph{left ideal}. An \emph{ideal} of $M$ is a subset $\m$ which is simultaneously a left and a right ideal.

A \emph{right $M$-set} is a set $A$, together with a map $A\times M\rightarrow A$, $(a,m)\mapsto am$, such that 
$$a1=1, \quad  {\rm and } \quad a(mn)=(am)n, \quad m,n\in M, a\in A.$$ 
A \emph{left $M$-set} has a similar meaning. It is clear that a right ideal of $M$ is a right $M$-set. We denote the category of left and right $M$-sets by $_M\set$ and $\set_M$ respectively. Observe that $_M\set\simeq \set_{M^{op}}$, where $M^{op}$ is the opposite monoid.

The category $\set_M$ (resp. $_M\set$) posses all limits and colimits. It is well-known that the forgetful functor from $\set_M$ (resp. $_M\set$) to $\set$ the category of sets preserves all limits and colimits. In particular, the coproduct is given by the disjoint union.

Recall also the construction of the tensor product of $M$-sets. Let $A$ be a right $M$-set and $B$ a left $M$-set. Denote by $A\otimes_MB$ the set  $(A\times B)/\sim$, where $\sim$ is the equivalence relation generated by $(am,b)\sim (a,mb)$. Here, $a\in A, b\in B$ and $m\in M$. The class of $(a,b)$ in $A\otimes_MB$ is denoted by $a\otimes b$. We have $am\otimes b=a\otimes mb$.

One has the following bijection
$$\Hom_\set(A\otimes_MB,C)\simeq \Hom_{\set_M}(A,\Hom_\set(B,C))$$
for any set $C$. We note that the $M$-set structure of $\Hom_\set(B,C)$ is given by
$$(f\cdot m)(b)=f(mb), \ \  m\in M, b\in B, f\in\Hom_\set(B,C).$$
It follows that the functor
$$(-)\otimes_MB:\set_M\rightarrow \set$$
has a right adjoint functor $ \Hom_\set(B,-)$ for any left $M$-set $B$. Hence, it 
commutes with arbitrary colimits. One can easily check that conversely, any covariant functor $F:\set_M\rightarrow \set$ commuting with all colimits is isomorphic to a functor of the type $(-)\otimes_MB$. Here, $B=F(M)$ as a set, while a left $M$-set structure on $B$ is induced by $mb:=F(l_m)(b)$, where $b \in F(M), m\in M$ and $l_m:B\rightarrow B$ is the homomorphism given by $l_m(b)=mb$.

It is well-known that both categories, $_M\set$ and $\set_M$, are topoi \cite{mm}.

\subsection{Monoid homomorphisms and morphisms of topoi}

Let $f:M\rightarrow N$ be a monoid homomorphism. We can consider a right $N$-set as a right $M$-set via this homomorphism. We denote it by $f^\bullet(A)$. Similarly for left $M$-sets. In particular, $N$ itself can be seen as both a right and a left $M$-set. Thus, we can form
$$f_\bullet(A)= \Hom_{\set_M}(N,A) \ \ {\rm and} \ \ f_!(A)=A\otimes _MN$$
for any right $M$-set $A$. Both of these sets have right $N$-set structures given by
$$ (g\cdot n)(x)=g(nx) \ \ \ \ {\rm and} \ \ \ \ (a\otimes n)x:=a\otimes (nx)$$
respectively. Here, $a\in A, n,x\in N$ and $g\in\Hom_{\set_M}(N,A)$. These $N$-sets define left and right adjoint functors of the functor $f^\bullet$. In particular, the pair $(f^\bullet,f_\bullet)$ defines a morphism of topoi $\set_M\rightarrow \set_N$, which will be denote by $\set_f$. We obtain a (pseudo)functor $$\set_f:{\sf Monoids}\rightarrow {\sf Top}.$$

\subsection{Localisation}\label{localisationsection}

We have defined two pairs of adjoint functors $(f^\bullet,f_\bullet)$ and $(f_!,f^\bullet)$ for a given homomorphism of monoids $f:M\rightarrow N$. In general, only the first one defines a morphism of topoi. We will show in this section that the the second adjoint pair is also a morphism of topoi, in the special case when $f$ is a localisation.

Let $M$ be a commutative monoid. Recall that if $S\subseteq M$ is a submonoid, one can form a new monoid $S^{-1}M$, called the \emph{localisation of $M$ by $S$}. Elements of $S^{-1}M$ are fractions $\frac{m}{s}$, where $m\in M$ and $s\in S$. By definition, $\frac{m_1}{s_1}=\frac{m_2}{s_2}$ if and only if there is an element $s\in S$, such that $m_1ss_2=m_2ss_1$.

This construction can be extended to $M$-sets in an obvious way. It is also obvious that $S^{-1}A$ is a natural $S^{-1}M$-set if $A$ is an $M$-set. We have a monoid homomorphism
$$f:M\rightarrow S^{-1}M,  \ \  f(m)=\frac{m}{1}.$$
One also has
$$S^{-1}A=A\otimes_M S^{-1}M=f_!(A)$$
for any $M$-set $A$. We discussed in the previous section that $f$ induces a geometric morphism of topoi
$$\set_f=(f^{\bullet},f_\bullet):\set_{M}\rightarrow \set_{S^{-1}M}.$$
One easily checks that the localisation $f_!$ preserves finite limits. Hence, the pair $(f_!,f^\bullet)$ defines a geometric morphism 
$$\set^f:\set_{S^{-1}M}\rightarrow \set_M.$$
For any $S^{-1}M$-set $A$, the $M$-set $f^\bullet(A)$ is isomorphic to $A$, considered as an $M$-set via the monoid homomorphism $f:M\rightarrow S^{-1}M$. It follows that $S^{-1}A\simeq A$, which implies that $f_!f^\bullet \simeq \id$. Hence, $\set^f:\set_{S^{-1}M}\rightarrow \set_M$ is a localisation of topoi, i.e. $f_!:\set_M\to \set_{S^{-1}M}$ is a localisation of categories.

\subsection{$s$-Noetherian monoids}

We have defined the notion of locally $s$-noetherian topoi in Section \ref{s-noetherian}, which we studied further in Section \ref{gluing s-noetherian}. Our aim now is to show that, under some finiteness assumptions on $M$, the topos of $M$-sets is $s$-noetherian. These types of monoids are going to be referred to as $s$-noetherian monoids. We will then continue to study them in a bit more detail.

Let $X$ be a set and $M$ a monoid. Recall that one can define a right $M$-set structure on $X\times M$ by $(x,n)m=(x,nm)$, $n,m\in M$, $x\in X$. A right $M$-set $A$ is called \emph{free} if there exists a set $X$ and an isomorphism of right $M$-sets $A\simeq X\times M$. It is well-known that any right $M$-set is a quotient of a free right $M$-set. We can consider $M$ itself as a free right $M$-set by taking $X$ to be a one-element set. We have $\Hom_{\set_M}(M,A)\simeq A$ for any $M$-set $A$. It follows that $A$ generates the topos $\set_M$.

We call a right $M$-set $A$ \emph{$s$-noetherian} if it satisfies the ascending chain condition on subobjects. According to part i) of Lemma \ref{snoto}, this happens if and only if any family of right $M$-subsets of $A$ contains a maximal member. A monoid $M$ is called \emph{right $s$-noetherian} if it is $s$-noetherian considered as a right $M$-set. I.e., if $M$ satisfies the ascending chain condition on right ideals. This is equivalent to saying that any family of right ideals of $M$ contains a maximal one.

\begin{Rem} We use the term $s$-noetherian in order to distinguish it from \emph{noetherian monoids} in the sense of \cite{gilmer}, which satisfy the ascending chain condition on congruences.
\end{Rem}

\begin{Le}\label{3651} Let $M$ be a monoid. The following are equivalent:
\begin{itemize}
\item[i)] $M$ is right $s$-noetherian.
\item[ii)] The topos $\set_M$ is $s$-noetherian.
\end{itemize} 
\end{Le}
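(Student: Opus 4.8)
The plan is to exploit that $\set_M$ is generated by the single object $M$, viewed as the free right $M$-set on one generator. Under this viewpoint the subobjects of $M$ in $\set_M$ are precisely the right ideals of $M$ (together with the empty subset), and an ascending chain of subobjects of $M$ is literally an ascending chain of right ideals. Consequently $M$ is an $s$-noetherian object of $\set_M$ if and only if $M$ satisfies the ascending chain condition on right ideals, i.e. if and only if $M$ is right $s$-noetherian. Both implications are then read off from this single observation, using that $M$ generates $\set_M$ (this is the content of $\Hom_{\set_M}(M,A)\simeq A$).

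For i)$\Rightarrow$ii): assuming $M$ is right $s$-noetherian, the generator $M$ is an $s$-noetherian object by the identification above. Hence $\{M\}$ is a family of $s$-noetherian generators, and $\set_M$ is locally $s$-noetherian directly from the definition.

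For the converse ii)$\Rightarrow$i), which is the part that needs a genuine argument, I would invoke Lemma \ref{nounion}: since $\set_M$ is (locally) $s$-noetherian, one has $M=\bigcup_N N$, where $N$ ranges over the $s$-noetherian subobjects of $M$. The crucial point is that $M$ is \emph{cyclic} as a right $M$-set, generated by the identity element $1$. Because the forgetful functor $\set_M\rightarrow\set$ preserves colimits, this union is computed set-theoretically, so $1$ lies in some $s$-noetherian subobject $N_0\subseteq M$. But $N_0$ is a right $M$-subset containing $1$, whence $N_0\supseteq 1\cdot M=M$ and therefore $N_0=M$. Thus $M$ itself is $s$-noetherian, which is exactly the statement that $M$ is right $s$-noetherian.

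The only subtle step is this converse, and its crux is the cyclicity of $M$: a priori being a union of $s$-noetherian subobjects is weaker than being $s$-noetherian, and it forces $M$ to be $s$-noetherian precisely because the single element $1$ already generates the whole object, so one member of the union must coincide with $M$. I expect no difficulty beyond checking that the union of Lemma \ref{nounion} may be read element-wise through the colimit-preserving forgetful functor to $\set$.
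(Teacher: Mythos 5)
Your proof is correct, but the converse direction takes a genuinely different route from the paper's. The paper argues directly from the generators: given a family $(G_i)$ of $s$-noetherian generators, it produces an epimorphism $f:\coprod X_j\rightarrow M$ with each $X_j$ isomorphic to some $G_i$, picks $x$ with $f(x)=1$ (epimorphisms in $\set_M$ are surjections), notes that $x$ lies in a single component $X_j$ (coproducts are disjoint unions), so that the restriction $f_j:X_j\rightarrow M$ is already an epimorphism, and then concludes by part iii) of Lemma \ref{snoto} that $M$, being an epimorphic image of the $s$-noetherian object $X_j$, is itself $s$-noetherian. You instead invoke Lemma \ref{nounion} to write $M$ as the union of its $s$-noetherian subobjects and use cyclicity to see that one member of the union contains $1$ and hence equals $M$. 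Both arguments turn on exactly the same pivot --- the identity element generates $M$ as a right $M$-set, so anything reaching $1$ must reach all of $M$ --- but they apply it on opposite sides: the paper to an epimorphic cover by generators, you to the lattice of subobjects. Your route is shorter given Lemma \ref{nounion}, though that lemma's own proof repeats much of the same machinery (morphisms from generators, images, Lemma \ref{321}), so nothing is saved globally; the paper's version is more self-contained, needing only the elementary facts that epimorphisms in $\set_M$ are surjections and coproducts are disjoint unions, and it avoids the (correct, but worth checking) point you flag that unions of subobjects in $\set_M$ are computed set-theoretically. Your forward direction coincides with the paper's one-line argument.
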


\begin{proof} It is clear that $i)\Longrightarrow ii)$ since $M$ generates the topos $\set_M$.

Conversely, assume $\set_M$ is $s$-noetherian and let $(G_i)_{i\in I}$ be a set of noetherian generators of $\set_M$. Any object of $\set_M$ is a quotient of a coproduct of objects from the family of generators. In particular, there is an epimorphism $f:\coprod X_j\rightarrow M$, where each $X_j$ is isomorphic to one of the $G_i$-s. Since epimorphisms in $\set_M$ are exactly surjective homomorphisms of right $M$-sets, there is an element $x\in \coprod X_j$, such that $f(x)=1\in M$.

Recall that the coproduct in $\set_M$ is the disjoint union. Hence, $x\in X_j$ for some $j$. It follows that the restriction $f_j:X_j\rightarrow M$ of $f$ on $X_j$ is an epimorphism. Since $X_j$ is $s$-noetherian (it is isomorphic to one of the $s$-noetherian generator), it follows by part iii) of Lemma \ref{snoto} that $M$ is also $s$-noetherian. Thus, $(ii)\Longrightarrow i)$.
\end{proof}

\begin{Co}\label{s-noetherlocalisation} Let $M$ be an $s$-noetherian monoid and $S\subseteq M$ a multiplicative subset. The localisation $S^{-1}M$ is $s$-noetherian as well.
\end{Co}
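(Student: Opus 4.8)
The plan is to read the corollary entirely through the topos machinery assembled in the previous sections, turning it into a one-step transport of the locally $s$-noetherian property along a localisation. First I would note that, by hypothesis, $M$ is $s$-noetherian as a right $M$-set, and that $M$ generates the topos $\set_M$. Hence the single object $M$ already constitutes a family of $s$-noetherian generators, so $\set_M$ is a locally $s$-noetherian topos; in the language of Lemma \ref{3651}, the topos $\set_M$ is $s$-noetherian.

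Next I would invoke the geometric morphism built in Section \ref{localisationsection} for $f\colon M\to S^{-1}M$. There it was shown that $f_!\colon \set_M\to \set_{S^{-1}M}$ preserves all colimits and finite limits and that its right adjoint $f^\bullet$ is full and faithful (equivalently, the counit $f_!f^\bullet\to\id$ is an isomorphism); that is, $f_!$ is a localisation of categories. This is precisely the hypothesis of Lemma \ref{233} i), taken with $\ta_1=\set_M$, $\ta_0=\set_{S^{-1}M}$ and $j^\bullet=f_!$. Applying that lemma gives that $\set_{S^{-1}M}$ is again locally $s$-noetherian.

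Finally I would translate back to the monoid. Since $S^{-1}M$ generates $\set_{S^{-1}M}$, Lemma \ref{3651} applied to the monoid $S^{-1}M$ converts local $s$-noetherianity of $\set_{S^{-1}M}$ into right $s$-noetherianity of $S^{-1}M$, which is the assertion. I expect no genuinely hard step here: the whole argument is bookkeeping, and the only points deserving care are matching $f_!$ (and not $f^\bullet$) with the inverse-image functor $j^\bullet$ of Lemma \ref{233}, and keeping track of which of the two topoi is the locally $s$-noetherian source. As a sanity check, the statement also admits a direct elementary proof: contraction along $f$ sends ideals of $S^{-1}M$ injectively and monotonically to ideals of $M$, each ideal of $S^{-1}M$ being the extension of its contraction, so the ascending chain condition descends from $M$ to $S^{-1}M$.
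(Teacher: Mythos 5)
Your proof is correct and follows essentially the same route as the paper: the paper's proof is precisely the combination of Lemma \ref{3651}, Lemma \ref{233}(i), and the fact from Section \ref{localisationsection} that $f_!:\set_M\to\set_{S^{-1}M}$ is a localisation of categories, which is exactly the transport argument you describe (including the correct identification of $f_!$, not $f^\bullet$, with the functor $j^\bullet$ of Lemma \ref{233}). Your closing elementary argument via contraction and extension of ideals is also valid, but it is an extra alternative rather than the paper's method.
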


\begin{proof} This follows directly from Lemmas \ref{233} and \ref{3651}, and the fact that as mentioned, the geometric morphism $\set^f:\set_{S^{-1}M}\rightarrow \set_M$ is a localisation of topoi.
\end{proof}

Recall that a right $M$-set $A$ is called \emph{finitely generated} if there are elements $a_1,\cdots, a_k\in A$, such that any element $a\in A$ can be written as $a=a_im$, for some $i\in\{1,\cdots,k\}$ and $m\in M$. A right ideal of $M$ is called finitely generated if it is so as a right $M$-set.

\begin{Le} A monoid $M$ is $s$-noetherian if and only if any right ideal of $M$ is finitely generated.
\end{Le}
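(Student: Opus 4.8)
The statement asserts the equivalence of two conditions on a monoid $M$: being $s$-noetherian (the ascending chain condition on right ideals) and having every right ideal finitely generated. The plan is to prove both implications directly, following the standard pattern from commutative ring theory, but carried out in the purely combinatorial setting of $M$-sets.

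First I would prove that if every right ideal is finitely generated, then $M$ is $s$-noetherian. Given an ascending chain of right ideals $\m_1\subseteq \m_2\subseteq\cdots\subseteq M$, I would form the union $\m=\bigcup_k\m_k$. One checks that $\m$ is again a right ideal, using that the $\m_k$ are nested and that the union of right ideals is closed under multiplication by $M$ on the right. By hypothesis $\m$ is finitely generated, say by $a_1,\dots,a_r$. Each generator $a_i$ lies in some $\m_{k_i}$, and taking $n=\max_i k_i$, all generators lie in $\m_n$; since they generate $\m$ and $\m_n\subseteq\m$, we get $\m_n=\m$, so the chain stabilises at $n$.

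For the converse, I would argue by contraposition: suppose some right ideal $\m$ is not finitely generated, and produce a strictly ascending chain witnessing the failure of the ascending chain condition. Choose $a_1\in\m$ and let $\m_1=a_1M$ (the right $M$-subset generated by $a_1$). Since $\m$ is not finitely generated, $\m_1\subsetneq\m$, so I can pick $a_2\in\m\setminus\m_1$ and set $\m_2=a_1M\cup a_2M$. Inductively, having built a finitely generated $\m_k\subsetneq\m$, pick $a_{k+1}\in\m\setminus\m_k$ and adjoin $a_{k+1}M$. This yields a strictly increasing chain $\m_1\subsetneq\m_2\subsetneq\cdots$ of right ideals of $M$, contradicting the ascending chain condition. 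Hence $M$ $s$-noetherian forces every right ideal to be finitely generated.

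The only subtlety worth flagging is the bookkeeping in the converse: one must confirm at each stage that $\m_k$, being a finite union of principal right $M$-sets $a_iM$, is genuinely a right ideal and is properly contained in $\m$ precisely because $\m$ admits no finite generating set, so the selection of $a_{k+1}$ never terminates. This is where the construction relies essentially on the hypothesis, and it is the main (though routine) point to get right. Everything else reduces to the elementary observation, already used in the excerpt, that finitely generated right $M$-subsets are exactly finite unions of principal ones and that nested unions of right ideals remain right ideals.
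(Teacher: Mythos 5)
Your proposal is correct and follows essentially the same route as the paper: the union-of-the-chain argument for one direction, and the construction of a strictly ascending chain from a non-finitely-generated right ideal (by successively adjoining elements outside the ideal generated so far) for the other. The paper merely presents the two implications in the opposite order; the substance is identical.
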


\begin{proof} Assume $M$ is $s$-noetherian and there exists a right ideal $\m\subseteq M$, which is not finitely generated. In particular, $\m$ is non-empty. Take an element $m_1\in \m$ and consider the right ideal $\m_1$ generated by $m_1$. Clearly, $\m_1 \subsetneq \m$ and so, there exists an element $m_2\in \m$, such that $m_2\nin \m_1$. Let $\m_2$ be the right ideal generated by $m_1$ and $m_2$. We see that $\m_1 \subsetneq \m_2$. We obtain an infinite sequence of right ideals $\m_1 \subsetneq \m_2 \subsetneq \cdots$. This contradicts our assumptions. Hence, any right ideal is finitely generated.

Conversely, assume any right ideal is finitely generated. Let $\m_1 \subsetneq \m_2 \subsetneq \cdots$ be an ascending sequence of ideals and set $\m:=\bigcup \m_i$. By assumption, $\m$ is finitely generated and as such, there exists an $n\in\N$, such that $\m_n$ contains all the generators. Clearly, $\m_n=\m_{n+1}=\cdots$ and we are done.
\end{proof}

\begin{Exm}\label{snoe} It is clear that any finite monoid is $s$-noetherian. Moreover, any group is $s$-noetherian, because any non-empty right ideal of a group $G$ is $G$. 
The additive monoid of natural numbers $\N$ is also $s$-noetherian, because any ideal of $\N$ is principal. We refer to Lemma \ref{090217} for other examples.
\end{Exm}

\begin{Le} Let $M$ be an $s$-noetherian monoid and let $A$ be a right $M$-set.
\begin{itemize}
\item[i)] If $A$ is a finitely generated $M$-set and $B\subseteq A$ is an $M$-subset, then $B$ is also finitely generated.
\item[ii)] The $M$-set $A$ is $s$-noetherian if and only if $A$ is finitely generated.
\item[iii)] Let $A$ be a finitely generated $M$-set. Any family of $M$-subsets of $A$ contains a maximal member.
\end{itemize}
\end{Le}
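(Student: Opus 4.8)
The crux of the statement is part ii); once it is established, parts i) and iii) follow almost formally, so the plan is to prove ii) first and then deduce the other two.

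For the direction \emph{finitely generated $\Rightarrow$ $s$-noetherian}, suppose $A$ is generated by $a_1,\dots,a_k$. Sending the canonical generator of the $i$-th summand to $a_i$ defines an epimorphism $p\colon \coprod_{i=1}^k M\rightarrow A$ of right $M$-sets. Since $M$ is $s$-noetherian it is an $s$-noetherian object of $\set_M$, and a finite coproduct of $s$-noetherian objects is again $s$-noetherian by part iv) of Lemma \ref{snoto} (applied inductively); hence $\coprod_{i=1}^k M$ is $s$-noetherian. As $p$ is an epimorphism, part iii) of Lemma \ref{snoto} then shows that $A$ is $s$-noetherian.

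For the converse \emph{$s$-noetherian $\Rightarrow$ finitely generated}, I would argue contrapositively. If $A$ is not finitely generated (the case $A=\emptyset$ being trivial), pick $a_1\in A$ and set $A_1=a_1M$. Inductively, since $A_n=a_1M\cup\dots\cup a_nM$ cannot equal $A$, choose $a_{n+1}\in A\setminus A_n$ and set $A_{n+1}=A_n\cup a_{n+1}M$. Because unions and subobjects in $\set_M$ are computed set-theoretically, this produces a strictly ascending chain $A_1\subsetneq A_2\subsetneq\cdots$ of $M$-subsets of $A$, contradicting the ascending chain condition. Hence $A$ is finitely generated.

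Part i) then follows by applying ii) twice: if $A$ is finitely generated it is $s$-noetherian by ii), so its subobject $B$ is $s$-noetherian by part ii) of Lemma \ref{snoto}, whence $B$ is finitely generated again by ii). Part iii) is immediate from ii) together with part i) of Lemma \ref{snoto}: a finitely generated $A$ is $s$-noetherian, and any family of subobjects of an $s$-noetherian object has a maximal member. The only genuine content lies in the converse direction of ii); the main point to be careful about is that the chain produced there is \emph{strictly} ascending, which rests on the fact that colimits and subobjects in $\set_M$ agree with the underlying set-theoretic operations, so that $a_{n+1}\notin A_n$ really forces $A_n\subsetneq A_{n+1}$.
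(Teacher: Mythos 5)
Your proof is correct, but its architecture is genuinely different from the paper's. The paper proves part i) first and by hand: $M$-subsets of $M$ are right ideals, hence finitely generated since $M$ is $s$-noetherian; a subset of a finite free $M$-set is handled by decomposing $A=\coprod_{i=1}^k A_i$ into copies of $M$ and intersecting; and the general case follows by pulling $B$ back along a surjection $F\rightarrow A$ from a finite free $M$-set. It then deduces the forward implication of ii) from i) (the union of an ascending chain is an $M$-subset, hence finitely generated, so the chain stabilises), and proves iii) by a chain argument that in effect re-derives part i) of Lemma \ref{snoto}. You reverse the dependencies: you prove ii) first, obtaining ``finitely generated $\Rightarrow$ $s$-noetherian'' purely from the topos-theoretic Lemma \ref{snoto} (parts iv) and iii), applied to the epimorphism $\coprod_{i=1}^k M\rightarrow A$, noting that the hypothesis on $M$ says exactly that $M$ is an $s$-noetherian object of $\set_M$), and then get i) from ii) together with part ii) of Lemma \ref{snoto}, and iii) from ii) together with part i) of Lemma \ref{snoto}. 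The two converse arguments (the strictly ascending chain $a_1M\cup\cdots\cup a_nM$) coincide. What each buys: your route is shorter and leverages the general machinery of Section \ref{s-noetherian} instead of repeating it, and it cleanly isolates ii) as the only statement needing a bespoke argument; the paper's route keeps part i) self-contained and yields the concrete structural fact that an $M$-subset of a finite free $M$-set is a finite disjoint union of ideals, which your argument never exhibits. Your appeals to the facts that epimorphisms in $\set_M$ are the surjective homomorphisms and that subobjects are $M$-subsets are legitimate and are used implicitly by the paper as well.
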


\begin{proof} i)  The statement is true if $A\cong M$ is free with one generator, because $M$-subsets of $M$ are simply ideals. The union of finitely  generated $M$-subsets is finitely generated and the coproduct in the category $\set_M$ is given by the disjoint union. It follows that the statement is also true when $A$ is finitely generated and free as an $M$-set. In fact, we can think of $A$ as $A=\coprod_{i=1}^k A_i$, where each $A_i$ is free with one generator. Denote $B_i:=B\cap A_i$. These are $M$-subsets of the $A_i$-s and therefore, finitely generated as $M$-sets. We have $B=\coprod_{i=1}^k B_i$ and thus, $B$ is also a finitely generated $M$-set.

Let $A$ and $F$ be finitely generated $M$-sets and $\xi:F\rightarrow A$ a surjective homomorphism. Consider $\xi^{-1}(B)$. It is an $M$-subset of $F$ and as shown, finitely generated. It follows that $B$ is also finitely generated since $\xi^{-1}(B) \to B$ is surjective.

ii) Assume $A$ is finitely generated and let
$$A_1\subseteq A_2\subseteq\cdots\cdots\subseteq A$$
be a sequence of $M$-subsets of $A$. Consider the $M$-subset $B=\bigcup_{n=1}^\infty A_n$. It is finitely generated by part i). As such, there exists an integer $n$, such that $A_n$ contains every generator of $B$. We have $B=A_n$ and $A_n=A_{n+1}=\cdots$. Hence, $A$ is $s$-noetherian.

Conversely, assume $A$ is $s$-noetherian. This statement holds trivially if $A=\emptyset$. If $A\not=\emptyset$, we can take $a_1\in A$. Let $A_1$ be the $M$-subset generated by $a_1$. We are done if $A_1=A$. Otherwise, there exists an element $a_2\in A$, such that $a_2\nin A_1$. Denote by $A_2$ the $M$-subset generated by $a_1$ and $a_2$. We are done if $A_2=A$. Otherwise we continue. This process will stop after a finite number of steps by our assumption. Hence, $A$ is finitely generated.

iii) Let $A_i$, $i\in I$ be a family of $M$-subsets. Choose $i_1\in I$ and consider the corresponding $M$-subset $A_{i_1}$. We are done if $A_1$ is maximal. Otherwise, there exists $i_2\in I$ such that $A_{i_1}\subsetneq A_{i_2}$. We obtain an ascending sequence of $M$-subsets in this way, which must stabilise by part ii). Hence, $A_n=A_{n+1}=\cdots$ and as such, $A_n$ is maximal.
\end{proof}

\begin{Le} \label{090217} Let $M$ be a monoid.
\begin{itemize}
\item[i)] Let $f:M\rightarrow N$ be a surjective homomorphism of monoids. If $M$ is $s$-noetherian, then $N$ is $s$-noetherian.
\item[ii)] If $M$ is $s$-noetherian, then $M\times \N$ is also $s$-noetherian.
\item[iii)] $M$ is $s$-noetherian if it is commutative and finitely generated.
\item[iv)] Let $M$ be a commutative monoid. Then $M$ is $s$-noetherian if and only if $M_\red= M/M^\times$ is $s$-noetherian.
\end{itemize}
\end{Le}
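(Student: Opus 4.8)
The plan is to prove the four statements in order, leaning on the characterisation that a commutative monoid is $s$-noetherian exactly when every ideal is finitely generated (established in the previous lemma), together with the localisation stability of Corollary \ref{s-noetherlocalisation} and the topos-theoretic machinery of Lemma \ref{3651}.

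For i), I would argue directly with ideals. Let $f\colon M\rightarrow N$ be a surjective homomorphism and let $\n\subseteq N$ be a right ideal. Then $f^{-1}(\n)$ is a right ideal of $M$, hence finitely generated since $M$ is $s$-noetherian. I would check that the images of these generators generate $\n$: because $f$ is surjective, every $x\in\n$ is $f(y)$ for some $y\in f^{-1}(\n)$, and writing $y=a_i m$ with $a_i$ a generator gives $x=f(a_i)f(m)$. Thus every ideal of $N$ is finitely generated and $N$ is $s$-noetherian. Alternatively, this follows formally from the fact that $f^\bullet\colon\set_N\rightarrow\set_M$ exhibits $N$ as a quotient $M$-set of $M$, combined with part iii) of Lemma \ref{snoto}, but the ideal-theoretic argument is cleaner.

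For ii), the cleanest route is to invoke Lemma \ref{090217}(iii) once it is available—but since we are proving that lemma, I would instead argue that if $M$ is $s$-noetherian then so is $\set_M$ by Lemma \ref{3651}, and that $M\times\N$-sets relate to $M$-sets through a gluing or through an explicit ascending-chain argument on ideals of $M\times\N$. The most self-contained approach is combinatorial: an ideal $\a\subseteq M\times\N$ is determined by its slices $\a_n=\{m:(m,n)\in\a\}$, which form an ascending chain $\a_0\subseteq\a_1\subseteq\cdots$ of ideals of $M$ (since $(m,n)\in\a$ forces $(m,n+1)\in\a$). This chain stabilises at some $N_0$ because $M$ is $s$-noetherian and satisfies the ascending chain condition on ideals; each slice up to $N_0$ is finitely generated, and one assembles a finite generating set for $\a$ from the finitely many stabilised slices. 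I expect this \emph{Dickson's-lemma-style} bookkeeping to be the main obstacle, as it is the one place where genuine combinatorics (rather than formal adjunction-chasing) is required.

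For iii), I would proceed by induction. A finitely generated commutative monoid $M$ admits a surjection from $\N^k$ for some $k$ onto $M/M^\times$ after handling units, and I would first establish that $\N^k$ is $s$-noetherian by iterating part ii) starting from the base case $\N$ (which is $s$-noetherian by Example \ref{snoe}), then apply part i) to the surjection $\N^k\twoheadrightarrow M$. Finally, for iv), the backward direction uses that $M_\red=M/M^\times$ is a quotient of $M$, so part i) gives one implication. For the forward direction I would exploit that $M^\times$ acts by units: ideals of $M$ are in inclusion-preserving bijection with ideals of $M_\red$ because multiplying by a unit fixes any ideal, so an ascending chain of ideals in $M$ projects to an ascending chain in $M_\red$ and conversely lifts, making the two ascending chain conditions equivalent. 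The key point to verify carefully is that the preimage in $M$ of an ideal of $M_\red$ is again an ideal and that this preimage operation is strictly monotone, so that non-stabilising chains correspond on both sides.
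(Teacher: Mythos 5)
Your proposal is correct and takes essentially the same approach as the paper: i) lifting ideal generators along the surjection, ii) slicing an ideal of $M\times\N$ into an ascending chain of ideals of $M$, iii) iterating ii) from the base case $\N$ and applying i) to a surjection $\N^k\twoheadrightarrow M$ (your aside about $M/M^\times$ is unnecessary, since $M$ itself is a quotient of $\N^k$, but you end with the right statement), and iv) exploiting that ideals of $M$ are stable under the action of $M^\times$. The only differences are cosmetic: in ii) you locate a stabilisation index for the slice chain via the ascending chain condition, where the paper instead bounds levels by the maximal degree of chosen generators of the image ideal $\pi(\m)$, and in iv) you package the forward direction as an inclusion-preserving bijection between the ideal lattices of $M$ and $M_\red$, where the paper directly lifts a finite generating set from $\pi(\m)$ to $\m$; both variants rest on the same fact and are sound.
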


\begin{proof} i) Take a right ideal $\n$ of $N$. The preimage $f^{-1}(\n)$ is a right ideal of $M$ and hence, finitely generated. The images of these generators generate $\n$.

ii) Consider the canonical projection $\pi:M\times \N\rightarrow M$, given by $\pi(m,t)=m$, $m\in M$ and $t\in \N$. Let $\m$ be a right ideal of $M\times \N$. Its image $\pi(\m)$ is a right ideal of $M$ since $\pi$ is surjective. By assumption, there are elements
$$(m_1,t_1),\cdots, (m_k,t_k)\in \m,$$
such that $m_1,\cdots,m_k$ generate $\pi(\m)$. Let $t={\sf max}\{t_1,\cdots,t_k\}\in \N$. For a natural number $s\leq t$, we consider the subset $\m_s$ of $\m$ given by
$$\m_{s}=\{m\in M \ | \ (m,s)\in \m\}.$$
It is not only a subset of $M$, but also a subideal of $\pi(\m)$, because if $(m,s)\in \m$ and $m'\in M$, then $(mm',s)=(m,s)(m',0)\in \m$ (recall that we use additive notations for $\N$). Denote by $m_{1s},\cdots ,m_{l_s}$ the generators of this ideal. We claim that the elements
$$(m_i,t_i), (m_{js},s), \ i=1,\cdots,k, 0\leq s\leq t, 1\leq j\leq l_s$$
generate $\m$. To prove this claim, let $\n\subseteq \m$ denote the ideal generated by these elements. We have to show that $\m\subseteq \n$. Take an element $(m,r)\in \m\subseteq M\times \N$. If $r\leq t$, then $m\in \m_r$. So, we can write $m=m_{jr}m'$ for some $j$ and $m'\in M$. This gives us $(m,r)=(m_{jr},r) (m',0)$. We have shown that $(m,r)\in \n$. Assume now that $r>t$. Since $m\in \pi(\m)$, we can write $m=m_im'$, for some $i\in\{1,\cdots, k\}$ and $m'\in M$. We have
$$(m,r)=(m_i,t_i)(m',r-t_i)\in \n,$$ 
and we are done.

iii) By Example \ref{snoe}, $\N$ is $s$-noetherian. It follows from part ii) that the monoid $\N^d$ is $s$-noetherian for all $d\geq 0$. We can use part i) to finish the proof.

iv) By part i), we only have to show that $M$ is $s$-noetherian if $M_\red$ is $s$-noetherian. Consider the canonical projection $\pi:M\rightarrow M_\red$. The image $\pi(\m)$ of any ideal $\m\subseteq M$ is a finitely generated ideal of $M_\red$. There exists a finite collection of elements $m_1,\cdots,m_k\in \m$, such that $\pi(m_1),\cdots,\pi(m_k)$ generate $\pi(\m)$. Take any element $a\in \m$. There is an $i\in\{1,\cdots,k\}$ and an $m\in M$, such that $\pi(a)=\pi(m_i)\pi(m)$. Hence, $ag=m_im$ for some $g\in M^\times$. We have $a=m_i(mg^{-1})$ and hence, $m_1, \cdots,m_k$ generate $\m$.
\end{proof}

To see why we are working with locally $s$-noetherian topoi, rather than $s$-noetherian, observe the following example:

\begin{Exm}\label{prnoe} Let $M=\mathbb{N}$ be the monoid of natural numbers. The category $\set_\N$ is a locally $s$-noetherian topos by Lemma \ref{3651}. Clearly, $A=\N$ is a free $\N$-set with one generator. Thus, it is $s$-noetherian. However, $B=A\times A=\N^2$ is not finitely generated as an $\N$-set and as such, not $s$-noetherian.
\end{Exm}

\section{Monoid schemes and topoi}\label{030804}

The aim of this section is to study the category of quasi-coherent sheaves in more detail. In particular, we will show that this category is a locally $s$-noetherian topos under some finiteness conditions on $X$. We will first collect some basic facts and definitions about monoid schemes. Our main references are \cite{Lorscheid},\cite{chww}, \cite{deitmar}  and \cite{p2}.

All monoids are assumed to be commutative and written multiplicatively.

\subsection{Monoid schemes}\label{msch}

Let $M$ be a monoid. Recall that an ideal $\p$ is called \emph{prime} if for any $a,b\in M$ with $ab\in \p$, one has $a\in \p$ or $b\in \p$. We denote the set of all prime ideals of $M$ by $\Spec(M)$ \cite{deitmar}, \cite{p1}. One defines
$$D(f)=\{\p\in \Spec(M) \ | \ f\nin \p\}$$
for any element $f\in M$. The family of sets $D(f)$ forms a basis of open sets of the standard topology on $\Spec(M)$. Let $A$ be an $M$-set. It is well-known that there exist a unique sheaf of sets $\tilde{A}$ over $\Spec(M)$, such that
$$\Gamma(D(f),\tilde{A})=A_f.$$
The section of a sheaf $\mathcal{F}$ on an open subset $U$ (i.e. $\mathcal{F}(U)$) will also be denoted by $\Gamma(U, \mathcal{F})$. The localisation of $A$ with respect to a submonoid of $M$ generated by $f$ will be denoted by $A_f$. The stalk of $\tilde{M}$ at the point $\p$ is the localisation $A_\p$ of $A$ by the submonoid $M\setminus \p$. We use the notation $\O_M$ for the sheaf $\tilde{M}$ in the special case when $A=M$. This is a sheaf of monoids, while the sheaf $\tilde{A}$ becomes a sheaf of $\O_M$-sets, for any $M$-set $A$.

A monoid homomorphism $f:M\rightarrow N$ is called \emph{local} provided $f^{-1}(N^\times)\subseteq M^\times$. As usual, $M^\times$ denotes the subgroup of invertible elements of $M$. Equivalently, $f$ is local if it sends non-invertible elements to non-invertible elements.

A \emph{monoid space} is a pair $(X, \O_X)$, where $X$ is a topological space and $\O_X$ is a sheaf of monoids on $X$. A \emph{morphism of monoid spaces} $f:(X, \O_X)\rightarrow (Y, \O_Y)$ is given by a continuous map $f:X\rightarrow Y$, together with a morphism of monoid sheaves $f^\#:\O_Y \rightarrow f_\bullet\O_X$. Additionally, the induced morphisms on stalks $f^\#_x:\O_{Y, f(x)}\rightarrow \O_{X,x}$ must be local for all $x\in X$. We will often simply write $X$ instead of $(X,\O_X)$. The category of monoid spaces will be denoted by ${\bf MSpaces}$.

A monoid scheme is \emph{affine} provided it is isomorphic to $(\Spec(M),\O_M)$. The assignment $M\mapsto (\Spec(M),\O_M)$ yields a contravariant embedding of the category $\bf Com.Mon$ of commutative monoids into the category ${\bf MSchemes}$ of monoid schemes. 

A \emph{monoid scheme} is a locally affine monoid space $(X, \O_X)$. In other words, every point has an open neighbourhood isomorphic to the monoid space $(\Spec(M),\O_M)$, for some monoid $M$ \cite{deitmar}, \cite{chww}. Monoid schemes form a full subcategory of ${\bf MSpaces}$, which will be denoted by $\bf MSchemes$.

A monoid scheme is called $s$-\emph{noetherian} if it can be covered by a finite number of open affine monoid subschemes $\Spec(M_i)$, where each $M_i$ is an $s$-noetherian monoid.

Let $(X,\O_X)$ be a monoid scheme and $U\subseteq X$ an open subset. The restriction of $\O_X$ to $U$ is called an \emph{open subscheme} and is often denoted by $(X,\O_{X|U})$. 

\begin{Le}\label{trevesi} An open subscheme of an $s$-noetherian monoid scheme $X$ is $s$-noetherian.
\end{Le}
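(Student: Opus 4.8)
The plan is to reduce the statement to the affine pieces supplied by the definition and then exploit the fact that the spectrum of an $s$-noetherian monoid is a noetherian topological space. By hypothesis I may write $X=\bigcup_{i=1}^n\Spec(M_i)$ with each $M_i$ an $s$-noetherian monoid. For an open subscheme $U\subseteq X$ one has $U=\bigcup_{i=1}^n\bigl(U\cap\Spec(M_i)\bigr)$, and each $U\cap\Spec(M_i)$ is open in $\Spec(M_i)$. Covering such an open set by basic opens $D(f)$, each $D(f)$ is isomorphic to the affine monoid scheme $\Spec((M_i)_f)$, and $(M_i)_f$ is again $s$-noetherian by Corollary \ref{s-noetherlocalisation}. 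Thus $U$ is covered by affine open subschemes whose monoids are $s$-noetherian; the only point that needs care is that this cover can be taken to be \emph{finite}.

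To secure finiteness I would first establish that $\Spec(M)$ is a noetherian topological space whenever $M$ is $s$-noetherian. The closed subsets of $\Spec(M)$ are precisely the sets $V(\a)=\{\p\in\Spec(M)\mid \a\subseteq\p\}$ for ideals $\a\subseteq M$. To a closed set $Z$ I assign the ideal $I(Z)=\bigcap_{\p\in Z}\p$. The map $Z\mapsto I(Z)$ is order-reversing, and $V(I(Z))=Z$ for every closed $Z$: writing $Z=V(\a)$ one has $I(Z)\supseteq\a$, hence $V(I(Z))\subseteq V(\a)=Z$, while every $\p\in Z$ contains $I(Z)$, giving the reverse inclusion. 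In particular $Z\mapsto I(Z)$ is injective. Consequently any strictly descending chain of closed subsets $Z_1\supsetneq Z_2\supsetneq\cdots$ yields a strictly ascending chain of ideals $I(Z_1)\subsetneq I(Z_2)\subsetneq\cdots$, which is impossible since $M$ satisfies the ascending chain condition on ideals. Hence the closed sets of $\Spec(M)$ satisfy the descending chain condition, i.e. $\Spec(M)$ is a noetherian space, and so every open subset of $\Spec(M)$ is quasi-compact (the usual consequence of the ascending chain condition on open sets).

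With this in hand the argument closes at once. Each $U\cap\Spec(M_i)$, being open in the noetherian space $\Spec(M_i)$, is quasi-compact, so it is covered by \emph{finitely many} basic opens $D(f)\cong\Spec((M_i)_f)$. Taking the union over the finitely many indices $i=1,\dots,n$ exhibits $U$ as a finite union of affine open subschemes $\Spec((M_i)_f)$ with each $(M_i)_f$ an $s$-noetherian monoid, which is exactly the definition of an $s$-noetherian monoid scheme. I expect the middle step to be the main obstacle: the localisation identity $D(f)\cong\Spec((M_i)_f)$ and the stability of $s$-noetherianity under localisation are already available from Corollary \ref{s-noetherlocalisation} and the basic theory recalled above, but without the noetherianity of the space $\Spec(M_i)$ one could only guarantee a cover of $U\cap\Spec(M_i)$ by possibly infinitely many basic opens, and the finiteness that is built into the definition of an $s$-noetherian scheme would be lost.
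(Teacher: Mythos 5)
Your proof is correct, and its overall skeleton matches the paper's: reduce to the affine pieces $\Spec(M_i)$, cover the open set by basic opens $D(f)\cong\Spec((M_i)_f)$, and invoke Corollary \ref{s-noetherlocalisation} to keep the localisations $s$-noetherian. Where you diverge is in how the \emph{finiteness} of the cover is secured. The paper does this in one line: an open $U\subseteq\Spec(M)$ is $X\setminus V(\m)$ for an ideal $\m$, and since $s$-noetherian means every ideal is finitely generated, $\m=\langle f_1,\dots,f_k\rangle$ gives $U=D(f_1)\cup\cdots\cup D(f_k)$ directly. You instead prove the stronger topological statement that $\Spec(M)$ is a noetherian space --- via the order-reversing injection $Z\mapsto I(Z)=\bigcap_{\p\in Z}\p$ from closed sets to ideals, with $V(I(Z))=Z$ --- and then extract a finite subcover from quasi-compactness of opens. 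Both finiteness mechanisms rest on the same ACC hypothesis, so neither is circular; the paper's route is more economical, while yours establishes a reusable intermediate fact (noetherianity of $\Spec(M)$, hence quasi-compactness of every open subset of an $s$-noetherian monoid scheme) that the paper never states explicitly. One small point of care in both arguments, which you handle correctly: the identification of arbitrary closed sets with sets of the form $V(\a)$ for an ideal $\a$, which the paper asserts without proof and you justify by passing from a union of basic opens to the ideal generated by the corresponding elements.
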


\begin{proof} We first consider the case when $X=\Spec(M)$ is affine. Let $U$ be an open subset of $X$. There is an ideal $\m$ such that $U=X\setminus V(\m)$,  where $V(\m)=\{\p\in \Spec(M) \ | \ \m \subseteq \p\}$. Since $\m$ is finitely generated, there are $f_1,\cdots,f_k\in M$ such that $U=D(f_1)\cup\cdots \cup D(f_k)$. We have $D(f_i)=M_{f_i}$ for all $i\in\{1,\cdots,k\}$, where the $M_{f_i}$-s are $s$-noetherian by Corollary \ref{s-noetherlocalisation}. This makes $U$ an $s$-noetherian monoid scheme.

We have $X=U_1\cup\cdots\cup U_k$ in the general case, where the $U_i$-s are open and affine. Let $U$ be an open subscheme of $X$. We have $U=\cup_{i=1}^k(U\cap U_i)$. Each $U\cap U_i$ is an open monoid subscheme of an affine $s$-noetherian monoid scheme. As shown above, they can be expressed as a finite union of open and affine $s$-noetherian monoid subschemes. Hence, $U$ itself can be expressed in such a way, implying the result.
\end{proof}

\subsection{Sheaves of $\O_X$-sets}

Let $X$ be a monoid scheme and $\mathcal{A}$ a sheaf of $\O_X$-sets. That is to say, $\mathcal{A}$ is a sheaf of sets, together with an action of the monoid $\O_X(U)$ on the set $\mathcal{A}(U)$ for all open  $U\subseteq X$. Further, these actions are compatible for varying $U$. One denotes by $\O_X$-${\sf Sets}$ the category of sheaves of $\O_X$-sets.  

\begin{Le}\label{yonshe} Let $\mathcal{A}$ be a sheaf of $\O_X$-sets. One has the following isomorphism
$$ \Hom_{\O_X}(\O_X,\mathcal{A})\simeq \Gamma(X,\mathcal{A}).$$
\end{Le}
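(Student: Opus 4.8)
The plan is to mimic the classical identity $\Hom_{\mathcal{O}_X}(\mathcal{O}_X,\mathcal{A})\simeq\Gamma(X,\mathcal{A})$ for sheaves of modules, exploiting the fact that $\O_X$ is the free sheaf of $\O_X$-sets on a single generator. The relevant generator is the global section $1\in\Gamma(X,\O_X)$, whose restriction to every open $U\subseteq X$ is the unit $1\in\O_X(U)$. Since $\O_X(U)$ acts on itself by multiplication, it is the free $\O_X(U)$-set on $1$, so any $\O_X(U)$-equivariant map out of $\O_X(U)$ is determined by the image of $1$. First I would define the evaluation map
$$\Phi:\Hom_{\O_X}(\O_X,\mathcal{A})\to\Gamma(X,\mathcal{A}),\qquad \Phi(\phi)=\phi_X(1),$$
where $\phi_X:\O_X(X)\to\mathcal{A}(X)$ denotes the component of $\phi$ on global sections.

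For the inverse, given a global section $s\in\Gamma(X,\mathcal{A})$, I would define for each open $U\subseteq X$ the map $\psi^s_U:\O_X(U)\to\mathcal{A}(U)$ by $\psi^s_U(f)=f\cdot(s|_U)$, where $s|_U$ is the restriction of $s$ to $U$. Two verifications are then needed: that each $\psi^s_U$ is $\O_X(U)$-equivariant, which is immediate from $\psi^s_U(gf)=(gf)\cdot s|_U=g\cdot\psi^s_U(f)$ using associativity of the action; and that the family $(\psi^s_U)_U$ commutes with the restriction maps, which follows from the compatibility of the action with restriction built into the definition of a sheaf of $\O_X$-sets. This yields a morphism of sheaves of $\O_X$-sets and hence a map $\Psi:\Gamma(X,\mathcal{A})\to\Hom_{\O_X}(\O_X,\mathcal{A})$.

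Finally I would check that $\Phi$ and $\Psi$ are mutually inverse. One direction is immediate, namely $\Phi(\Psi(s))=\psi^s_X(1)=1\cdot s=s$. For the other, given $\phi$ I must show that on each open $U$ the equality $\phi_U(f)=f\cdot(\phi_X(1)|_U)$ holds. The key point, and the only step requiring genuine care, is the identity $\phi_X(1)|_U=\phi_U(1|_U)=\phi_U(1)$, which expresses that $\phi$ is a morphism of sheaves (its components commute with restriction) together with the fact that $1$ restricts to $1$. Granting this, equivariance of $\phi_U$ gives $f\cdot\phi_U(1)=\phi_U(f\cdot 1)=\phi_U(f)$, so $\Psi(\Phi(\phi))=\phi$. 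The main obstacle is thus purely bookkeeping: ensuring that the naturality of $\phi$ with respect to restriction is invoked correctly, everything else being a direct consequence of $\O_X$ being free on the global unit.
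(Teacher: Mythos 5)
Your proof is correct and follows exactly the route the paper takes: the paper's proof consists of observing that the assignment $\alpha\mapsto\alpha_X(1)$ induces the desired isomorphism, leaving the verification to the reader. You have simply supplied the routine details (construction of the inverse via $f\mapsto f\cdot s|_U$, equivariance, compatibility with restriction, and the two composites), all of which are carried out correctly.
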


\begin{proof} Recall that a morphism $\alpha:\O_X\rightarrow \mathcal{A}$ of the category $\O_X$-${\sf Sets}$ is a collection of maps $\alpha_U:\O_X(U)\rightarrow \mathcal{A}(U)$, where $U$ is running through the open subsets of $X$. One easily sees that the assignment $\alpha\mapsto\alpha_X(1)$ induces the isomorphism in question.
\end{proof}

It is well-known that the category $\O_X$-${\sf Sets}$ is a topos. The \emph{tensor product} of two $\O_X$-sets $\mathcal{A}$ and $\mathcal{B}$ is the sheafification of the presheaf of $\O_X$-sets given by $U\mapsto{\mathcal{A}}(U)\otimes_{\O_X}{\mathcal{B}}(U)$. Here, $U\subseteq X$ is an open subset of $X$.

Let $(f,f^\#):X\rightarrow Y$ be a morphism of monoid schemes. As already mentioned, the continuous map $f$ induces a geometric morphism of topoi $(f^\bullet, f_\bullet):{\sf Sh}(X)\rightarrow{\sf Sh}(Y)$. The functor $f_\bullet$ respects inverse limits. As such, the sheaf $f_\bullet {\mathcal O}_X$ is a sheaf of monoids on $Y$ and for any $\O_X$-set ${\mathcal A}$, the sheaf $f_\bullet \mathcal{A}$ is a sheaf of $f_\bullet\O_X$-sets. The morphism of monoid sheaves $f^\#:\O_Y\rightarrow f_\bullet\O_X$ allows us to define an action of $\O_Y$ on $f_\bullet \mathcal{A}$. We obtain the functor
$$f_\bullet: \O_X\textnormal{-}{\sf Sets} \rightarrow \O_Y\textnormal{-}{\sf Sets}, \quad \textnormal{given by} \quad \mathcal{A}\mapsto f_\bullet \mathcal{A}.$$
The functor $f_\bullet$ has a left adjoint functor $f^*$, which is constructed as follows. Take an $\O_Y$-set $\mathcal{B}$. Since $f^\bullet:{\sf Sh}(Y)\rightarrow {\sf Sh}(X)$ respects finite limits, $f^\bullet \mathcal{B}$ has an $f^\bullet \O_X$-set structure.

On the other hand, the morphism of monoid sheaves $f^\#:\O_Y\rightarrow f_\bullet \O_X$ induces (by adjointnes) the morphism of monoid sheaves $f^\bullet \O_Y\rightarrow\O_X$. Hence, $\O_X$ can be considered as an $f^\bullet \O_Y$-set and one has
$$f^*(\mathcal{B}):=f^\bullet{\mathcal{B}}\otimes_{f^\bullet \O_Y}{\O_X}.$$

\begin{Le}\label{open-o} Let $U$ be an open subscheme of a monoid scheme $X$. The inclusion $j:U\rightarrow X$ induces a localisation of topoi $j=(j^*,j_*):\O_U\textnormal{-}{\sf Sets} \rightarrow \O_X$-${\sf Sets}$, i.e. $j^*:\O_X$-${\sf Sets} \to \O_U\textnormal{-}{\sf Sets}$ is a localisation of categories.
\end{Le}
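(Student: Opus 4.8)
The plan is to reduce everything to the topological statement already recalled in the excerpt, namely that the restriction functor $j^\bullet:\Sh(X)\rightarrow\Sh(U)$ attached to the open inclusion $j:U\rightarrow X$ is a localisation of categories, and then to carry the $\O$-action along. By the definition given in the section on localisations, it suffices to show that $j^*:\O_X\textnormal{-}{\sf Sets}\rightarrow\O_U\textnormal{-}{\sf Sets}$ commutes with all colimits and finite limits and possesses a full and faithful right adjoint $j_*$; this is exactly the assertion that $j=(j^*,j_*)$ is a localisation of topoi.

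First I would observe that, for an open subscheme, the general formula $j^*(\mathcal{B})=j^\bullet\mathcal{B}\otimes_{j^\bullet\O_X}\O_U$ collapses. Since $U$ carries the restricted structure sheaf $\O_{X|U}$, the canonical comparison $j^\bullet\O_X\rightarrow\O_U$ is an isomorphism of sheaves of monoids on $U$. Hence the tensor product is taken over $\O_U$ itself, and $j^*(\mathcal{B})$ is simply the restriction $j^\bullet\mathcal{B}$ of the underlying sheaf, equipped with its natural $\O_U$-action. Thus $j^*$ is nothing but restriction along $j$.

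Next I would verify the two properties of a localisation of categories. As the inverse image part of the geometric morphism $j$, the functor $j^*$ commutes with finite limits; being a left adjoint, it also commutes with all colimits. (Equivalently, both facts follow because the forgetful functor to $\Sh(-)$ creates limits and colimits, and restriction of sheaves preserves both.) It then remains to check that $j_*$ is full and faithful, which by the criterion recalled in the excerpt is equivalent to the counit $j^*j_*\rightarrow\id_{\O_U\textnormal{-}{\sf Sets}}$ being an isomorphism.

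For this last point I would use that on underlying sheaves the counit $j^\bullet j_\bullet\rightarrow\id_{\Sh(U)}$ is an isomorphism, since restricting a pushforward along an open inclusion back to $U$ recovers the original sheaf. Because $j^*$ and $j_*$ are obtained from $j^\bullet$ and $j_\bullet$ by transporting the evident $\O$-actions, and these actions are determined on the underlying sheaves, the counit of $(j^*,j_*)$ is an isomorphism as well; applying the criterion gives the claim. The main obstacle is the bookkeeping of the identification $j^*\simeq j^\bullet$: one must check that the $\O_U$-module identifications are canonical and compatible with the adjunction, so that the purely topological isomorphisms genuinely lift to the categories of $\O$-sets. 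Once that is in place, the remaining steps are immediate consequences of the stated general facts.
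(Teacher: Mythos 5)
Your proposal is correct and takes essentially the same route as the paper: one identifies $j^*$ with the plain sheaf-theoretic restriction $j^\bullet$ via the isomorphism $j^\bullet\O_X\simeq\O_U$ (since $U$ carries the restricted structure sheaf), and then invokes the previously recalled fact that restriction of sheaves to an open subset is a localisation of categories. The paper's proof is exactly this observation stated in one line; your write-up only adds the explicit bookkeeping that the counit isomorphism on underlying sheaves lifts to sheaves of $\O$-sets.
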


\begin{proof} In this case, $j^*\O_X=\mathcal{U}_X$. Hence, $j^*=j^\bullet$ and the result follows from our discussion at the end of Section \ref{localisationsection}.
\end{proof}

We will usually use $f_*$ instead of $f_\bullet$, in order to synchronise the notation with $f^*$. Whenever used, we will always have $f_*:=f_\bullet$.

\subsection{Quasi-coherent sheaves}

Let $\mathcal{A}$ be a sheaf of $\O_X$-set. It is said to be \emph{quasi-coherent} \cite{Lorscheid} if there is an open and affine subscheme $U=\Spec(M)$ for any point $x\in X$, such that $x\in U$ and the restriction of $\mathcal{A}$ on $U$ is isomorphic to a sheaf of the form $\tilde{A}$. Assume $\mathcal{A}$ is quasi-coherent. It is well-known \cite[Theorem 3.12]{Lorscheid} that the restriction of $\mathcal{A}$ on any open affine subscheme $U=\Spec(M)$ is isomorphic to a sheaf of the form $\tilde{A}$, for some $M$-set $A$.

The category of quasi-coherent sheaves over $X$ is denoted by $\Qc(X)$. It posses all colimits and finite limits since the localisation functor respects them \cite[Lemma 3.16]{Lorscheid}. If $X=Spec(M)$ is affine, then $A\mapsto \tilde{A}$ defines an equivalence of categories $\Qc(X)\simeq M$-$sets$, see \cite[Corollary 3.13]{Lorscheid}.
   
\begin{Le}\label{qcf} Let $f:X\rightarrow Y$ be a morphism of monoid schemes. 
\begin{itemize}
\item[i)] If $\mathcal{B}$ is a quasi-coherent sheaf of $\O_Y$-sets, then $f^*\mathcal{B}$  is a quasi-coherent sheaf of $\O_X$-sets.
\item[ii)] Assume $X$ is $s$-noetherian. If $\mathcal{A}$ is a quasi-coherent sheaf of $\O_X$-sets, then $f_*\mathcal{A}$ is a quasi-coherent sheaf of $f_*\O_X$-sets.
\end{itemize}
\end{Le}

\begin{proof} i) Take $x\in X$ and consider $f(x)\in Y$. Since $\mathcal{B}$ is quasi-coherent, there is a on open and affine subscheme $V=\Spec(N)\subseteq Y$, such that $f(x)\in V$ and the restriction of $\mathcal{B}$ on $V$ is of the form $\tilde{B}$, for some $N$-set $B$. Choose an open and affine subscheme $U=\Spec(M)$ of $X$, such that $f(U)\subseteq V$. The restriction of $f$ gives a morphism $U\rightarrow V$. This is determined by a monoid homomorphism $\phi:N\rightarrow M$. The result follows since $f^*(\mathcal{B})=\widetilde{M\otimes_N B}$.

ii) First, consider the case when both $X=\Spec(M)$ and $Y=\Spec(N)$ are affine. In this case, $f$ is induced by $\phi:N\rightarrow M$. Moreover, $\mathcal{A}=\tilde{A}$ for an $M$-set $A$ and $f_*\mathcal{A}=\tilde{A}$. Here, $N$ acts on $A$ via $\phi$ and the result follows.

Let $V$ be an open subset of a general monoid scheme $Y$. Then $U=f^{-1}(V)$ is $s$-noetherian by Lemma \ref{trevesi}. Hence, we can cover it by a finite number of open affine subschemes $U_i$, $i=1,\cdots,n$. Moreover, we can cover each $U_i\cap U_j$ by a finite number of open affine subschemes $U_{ijk}$.
The restriction of $f_*\mathcal{A}$ on $V$ is the equaliser of the diagram 
$$\xymatrix{ \prod_i f_*(\mathcal{A}_{|U_i}) \ar@<-.5ex>[r] \ar@<.5ex>[r] & \prod_{ijk} f_*(\mathcal{A}_{|U_{ijk}}), }$$
as $\mathcal{A}$ is a sheaf. Since the statement is true in affine case, $f_*(\mathcal{A}_{|U_i})$ and $f_*(\mathcal{A}_{|U_{ijk}})$ are quasi-coherent sheaves. We also know that a finite limit of quasi-coherent sheaves is again quasi-coherent \cite[Lemma 3.16]{Lorscheid}. It follows that $f_*\mathcal{A}$ is quasi-coherent since the product, as well as the equaliser, are specific types of limits.
\end{proof}

\begin{Le} Let $X$ be an $s$-noetherian monoid scheme and $U$ an open subscheme. The inclusion $j:U\rightarrow X$ induces a localisation of categories $j^*:\Qc(X)\rightarrow \Qc(U)$.
\end{Le}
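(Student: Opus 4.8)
The plan is to reduce the statement to the corresponding fact for the ambient topoi of $\O$-sets, combining Lemma \ref{open-o} with Lemma \ref{qcf}. Recall the criterion recorded earlier: $j^*:\Qc(X)\to\Qc(U)$ is a localisation of categories precisely when it commutes with all colimits and finite limits and admits a right adjoint $j_*:\Qc(U)\to\Qc(X)$ that is full and faithful; equivalently, when $j^*$ commutes with all colimits and finite limits and the counit $j^*j_*\to\id_{\Qc(U)}$ is an isomorphism. So the whole proof amounts to showing that the known localisation $(j^*,j_*):\O_U$-$\sc\to\O_X$-$\sc$ of Lemma \ref{open-o} restricts cleanly to the full subcategories of quasi-coherent sheaves.

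First I would observe that $U$ is $s$-noetherian by Lemma \ref{trevesi}, since it is an open subscheme of the $s$-noetherian scheme $X$. By Lemma \ref{qcf} i) the inverse image of a quasi-coherent sheaf is quasi-coherent, so $j^*$ restricts to a functor $\Qc(X)\to\Qc(U)$; and by Lemma \ref{qcf} ii), applied to the morphism $j:U\to X$ whose source $U$ is $s$-noetherian, the direct image $j_*$ sends quasi-coherent sheaves to quasi-coherent sheaves, so $j_*$ restricts to a functor $\Qc(U)\to\Qc(X)$. Since $\Qc(X)$ and $\Qc(U)$ are full subcategories of $\O_X$-$\sc$ and $\O_U$-$\sc$ respectively, the adjunction isomorphism $\Hom_{\O_U\text{-}\sc}(j^*\mathcal{A},\mathcal{B})\simeq\Hom_{\O_X\text{-}\sc}(\mathcal{A},j_*\mathcal{B})$ of Lemma \ref{open-o} specialises, for $\mathcal{A}\in\Qc(X)$ and $\mathcal{B}\in\Qc(U)$, to an isomorphism $\Hom_{\Qc(U)}(j^*\mathcal{A},\mathcal{B})\simeq\Hom_{\Qc(X)}(\mathcal{A},j_*\mathcal{B})$. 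Thus the restricted functors form an adjoint pair $(j^*,j_*)$ between $\Qc(X)$ and $\Qc(U)$.

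It then remains to transfer the two defining properties across this restriction. As noted earlier, colimits and finite limits in $\Qc$ are computed as in the ambient topos of $\O$-sets, quasi-coherent sheaves being closed under colimits and finite limits there (the latter by \cite[Lemma 3.16]{Lorscheid}); since $j^*$ commutes with all colimits and finite limits on $\O_X$-$\sc$ by Lemma \ref{open-o}, its restriction to $\Qc(X)$ does so as well. For the counit, Lemma \ref{open-o} gives that $j_*$ is full and faithful on $\O$-sets, so the counit $j^*j_*\to\id$ is an isomorphism of functors on $\O_U$-$\sc$; restricting this natural isomorphism to the full subcategory $\Qc(U)$ keeps it an isomorphism, so the counit of the restricted adjunction is again an isomorphism. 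This verifies the criterion and shows $j^*:\Qc(X)\to\Qc(U)$ is a localisation of categories. The one substantive input — and the only place the $s$-noetherian hypothesis enters — is that $j_*$ preserves quasi-coherence, i.e. Lemma \ref{qcf} ii) together with Lemma \ref{trevesi}; once that is in hand, the rest is the formal bookkeeping of restricting a localisation of topoi to the full subcategories of quasi-coherent sheaves.
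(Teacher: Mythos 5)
Your proof is correct and follows essentially the same route as the paper's: restrict the localisation of topoi $(j^*,j_*):\O_U\textnormal{-}{\sf Sets}\rightarrow\O_X\textnormal{-}{\sf Sets}$ of Lemma \ref{open-o} to quasi-coherent sheaves, using Lemma \ref{trevesi} and both parts of Lemma \ref{qcf} to see that the restriction is well defined. The paper's proof is just a terser version of the same argument, leaving implicit the bookkeeping you spell out (the restricted adjunction, the counit, and preservation of colimits and finite limits).
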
 

\begin{proof} The restriction of $j=(j^*,j_*):\O_U\textnormal{-}{\sf Sets}\rightarrow\O_X\textnormal{-}{\sf Sets}$ on quasi-coherent sheaves yields a well-defined morphism of topoi by Lemma \ref{qcf}. The morphism $j^*:\O_U\textnormal{-}{\sf Sets}\rightarrow\O_X\textnormal{-}{\sf Sets}$ is a localisation by Lemma \ref{open-o}. Hence, the same is true for $j^*:\Qc(X)\rightarrow \Qc(U)$.
\end{proof}

The following theorem might be of independent interest, as it introduces a previously unknown class of topoi, which is very closely related to the topoi of $M$-sets.

\begin{Th}\label{432.05.03} Let $X$ be an $s$-noetherian monoid scheme. The category $\Qc(X)$ of quasi-coherent sheaves is a locally $s$-noetherian topos.
\end{Th}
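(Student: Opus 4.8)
The plan is to reduce the global statement to the affine case, which is already handled by the earlier machinery, and then glue. By hypothesis, an $s$-noetherian monoid scheme $X$ admits a finite cover by open affine subschemes $U_i=\Spec(M_i)$, $i=1,\dots,n$, where each $M_i$ is an $s$-noetherian monoid. The base case is a single affine piece: by the equivalence $\Qc(\Spec(M))\simeq \set_M$ recorded after \cite[Corollary 3.13]{Lorscheid}, together with Lemma \ref{3651}, the topos $\Qc(U_i)\simeq \set_{M_i}$ is $s$-noetherian (in fact locally $s$-noetherian, since $M_i$ itself is an $s$-noetherian generator of $\set_{M_i}$). So each affine chart gives a locally $s$-noetherian topos.

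Next I would set up the gluing. The idea is to proceed by induction on the number $n$ of charts in the cover. Set $V=U_1\cup\cdots\cup U_{n-1}$ and $U=U_n$; then $X=V\cup U$, and $V$, $U$, and $V\cap U$ are all open subschemes of $X$, hence $s$-noetherian by Lemma \ref{trevesi}. By the inductive hypothesis $\Qc(V)$ is locally $s$-noetherian, $\Qc(U)$ is locally $s$-noetherian by the base case, and $\Qc(V\cap U)$ is locally $s$-noetherian as well (it is itself covered by finitely many affines, or one invokes Lemma \ref{233}(i)). The restriction functors $j_V^*:\Qc(X)\to\Qc(V)$ and $j_U^*:\Qc(X)\to\Qc(U)$, and likewise the restrictions to $V\cap U$, are localisations of categories by the preceding Lemma. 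The crucial point is then to identify $\Qc(X)$ with the gluing of $\Qc(V)$ and $\Qc(U)$ along $\Qc(V\cap U)$, after which Theorem \ref{Int} delivers that $\Qc(X)$ is a locally $s$-noetherian topos.

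The main obstacle is precisely this identification of $\Qc(X)$ with the categorical gluing $\ta$ of $\Qc(V)$ and $\Qc(U)$ over $\Qc(V\cap U)$. One must produce a comparison functor $\Qc(X)\to\ta$ sending a quasi-coherent sheaf $\mathcal A$ to the triple $(\mathcal A|_V,\mathcal A|_U,\,\mathrm{can})$, where the isomorphism component records that both restrictions restrict further to $\mathcal A|_{V\cap U}$, and to check it is an equivalence. Fullness, faithfulness, and essential surjectivity here are exactly the sheaf-gluing axioms: a quasi-coherent sheaf on $X=V\cup U$ is equivalent to a pair of quasi-coherent sheaves on $V$ and $U$ together with an isomorphism of their restrictions to the overlap, since $\{V,U\}$ is an open cover and quasi-coherence is a local condition checked on affines (using that $j^*$ commutes with the relevant limits and colimits, \cite[Lemma 3.16]{Lorscheid}). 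The one technical care needed is that the gluing isomorphism $\alpha$ in the definition of $\ta$ is required to be an isomorphism in $\Qc(V\cap U)$, which matches the descent datum for sheaves exactly.

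Once this equivalence $\Qc(X)\simeq\ta$ is in hand, Theorem \ref{Int} applies verbatim: it asserts that the gluing of two locally $s$-noetherian topoi along localisations is again a locally $s$-noetherian topos, with the projections being localisations. This completes the inductive step and hence the proof. I expect the verification of the Giraud-type local-to-global compatibilities to be routine, so the write-up should concentrate almost entirely on the gluing identification and then cite Theorem \ref{Int}.
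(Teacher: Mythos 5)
Your proposal is correct and follows essentially the same route as the paper: affine base case via $\Qc(\Spec(M))\simeq\set_M$ and Lemma \ref{3651}, then induction on the number of affine charts, identifying $\Qc(V\cup U)$ with the gluing of $\Qc(V)$ and $\Qc(U)$ along $\Qc(V\cap U)$ and invoking Theorem \ref{Int}. The paper states this gluing identification as an observation without proof, so your sketch of the descent argument simply fills in detail the paper leaves implicit.
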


\begin{proof} Let $X=\Spec(M)$ be an affine monoid scheme. In this case, $\Qc(X)$ is equivalent to the category of $M$-sets. The result follows from Lemma \ref{3651}.

For a general monoid scheme $X$, observe that if $U$ and $V$ are open monoid subschemes of $X$, the category $\Qc(U\cup V)$ is equivalent to the gluing of $\Qc(U)$ and $\Qc(V)$ along $\Qc(U\cap V)$. An obvious induction argument, together with Theorem \ref{Int}, implies the result.
\end{proof}

\begin{Rem} The subobject classifier \cite{mm} of the topos $\Qc(X)$, where $X$ is an $s$-noetherian monoids scheme, is obtained by gluing the subobject classifiers of the topoi of quasi-coherent sheaves over affine subschemes. A similar statement for the exponential objects is not true in general. We will not need these facts. Hence, we omit the arguments.
\end{Rem}

\subsection{Line bundles}\label{44123sec}

Recall that a quasi-coherent sheaf $\mathcal{L}$ over a monoid scheme $X$ is called a \emph{line bundle} if it is locally isomorphic to $\O_X$ \cite{p2}. Let $X$ be a monoid scheme and $x\in X$ a point of its underlying space. Denote the maximal ideal of $\O_{X,x}$ by $\m_x$. This is the set of non-invertible elements of the monoid $\O_{X,x}$. 

Let $\mathcal{L}$ be a line bundle over $X$ and $f\in \Gamma(X,\mathcal{L})$. We define
$$X_f:=\{x\in X \ | \ f_x\nin\m_x\mathcal{L}_x \},$$
where $f_x$ is the restriction of $f$ on $\O_{X,x}$. Observe that $X_f$ is an open subset of $X$. We have $X_f=D(f)$ when $X=\Spec(M)$ is affine and $\mathcal{L}=\O_X$.

We will make use of the obvious paring in the following lemma:
$$\Gamma(X,\mathcal{A})\otimes_{\Gamma(X,\O_X)}\Gamma(X,\mathcal{B})\rightarrow\Gamma(X,\mathcal{A}\otimes_{\O_X}\mathcal{B}).$$
The image of $a\otimes b$ under this map is denoted by $ab$.

\begin{Le}\label{44123} Let $X$ be an $s$-noetherian monoid scheme, $\mathcal{L}$ a line bundle on $X$ and $f\in \Gamma(X,\mathcal{L})$. Moreover, let $\mathcal{F}$ be a quasi-coherent sheaf over $X$.
\begin{itemize}
\item [i)] Let $s_1,s_2\in \Gamma(X,\mathcal{F})$ be sections such that restricted on $X_f$, they are equal. There exist a natural number $k>0$, such that $f^ks_1=f^ks_2$ as elements of $\Gamma(X,\mathcal{L}^{\otimes k}\otimes \mathcal{F})$.
\item [ii)] For any $t\in  \Gamma(X_f,\mathcal{F})$, there exists $n>0$ such that $f^nt\in  \Gamma(X_f,\mathcal{L}^{\otimes n}\otimes\mathcal{F})$ is in the image of the restriction map
$$\Gamma(X,\mathcal{L}^{\otimes n}\otimes\mathcal{F}) \rightarrow \Gamma(X_f,\mathcal{L}^{\otimes n}\otimes\mathcal{F}).$$
\end{itemize}
\end{Le}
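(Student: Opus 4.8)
The plan is to reduce everything to the affine $M$-set setting by passing to a finite cover on which the line bundle trivialises, prove (i) directly from the localisation of $M$-sets, and then bootstrap (ii) from (i).

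First I would choose a finite affine open cover $X=\bigcup_{i=1}^r U_i$ with $U_i=\Spec(M_i)$, each $M_i$ being $s$-noetherian, and on which $\mathcal{L}$ restricts to the trivial line bundle $\O_{U_i}$. This is possible because $X$ is $s$-noetherian and $\mathcal{L}$ is locally isomorphic to $\O_X$; refining to basic opens if necessary keeps the pieces affine and $s$-noetherian by Corollary \ref{s-noetherlocalisation}. Fixing a trivialisation $\mathcal{L}|_{U_i}\simeq\O_{U_i}$, the section $f$ corresponds to an element $f_i\in M_i=\Gamma(U_i,\O_X)$, and a check on stalks shows $X_f\cap U_i=D(f_i)$. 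Since $\mathcal{F}$ is quasi-coherent, $\mathcal{F}|_{U_i}\simeq\widetilde{A_i}$ for an $M_i$-set $A_i$, so $\Gamma(U_i,\mathcal{F})=A_i$ and, using the trivialisation, $\Gamma(U_i,\mathcal{L}^{\otimes k}\otimes\mathcal{F})\simeq A_i$ while $\Gamma(X_f\cap U_i,\mathcal{L}^{\otimes k}\otimes\mathcal{F})\simeq(A_i)_{f_i}$; under this dictionary multiplication by $f^k$ becomes the action of $f_i^k$, and restriction to $D(f_i)$ becomes the localisation map $a\mapsto a/1$.

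With this in hand, (i) is immediate: for each $i$ the sections $s_1|_{U_i},s_2|_{U_i}\in A_i$ have the same image in $(A_i)_{f_i}$, so by the definition of localisation of $M$-sets there is $k_i$ with $f_i^{k_i}s_1|_{U_i}=f_i^{k_i}s_2|_{U_i}$ in $A_i$, i.e. $(f^{k_i}s_1)|_{U_i}=(f^{k_i}s_2)|_{U_i}$. Setting $k=\max_i k_i$ and multiplying by $f^{k-k_i}$, the sections $f^k s_1$ and $f^k s_2$ agree on every $U_i$, hence agree globally by the sheaf axiom.

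For (ii) I would first lift $t$ locally: on $X_f\cap U_i=D(f_i)$ the element $t|_{D(f_i)}\in(A_i)_{f_i}$ can be written $a_i/f_i^{n_i}$, so $(f^{n_i}t)|_{X_f\cap U_i}$ is the restriction of the section $a_i\in\Gamma(U_i,\mathcal{L}^{\otimes n_i}\otimes\mathcal{F})$. Putting $n=\max_i n_i$ and $\tau_i:=f^{n-n_i}a_i\in\Gamma(U_i,\mathcal{L}^{\otimes n}\otimes\mathcal{F})$, each $\tau_i$ restricts to $f^n t$ on $X_f\cap U_i$. \textbf{The main obstacle is the gluing}: the $\tau_i$ need not agree on the full overlaps $U_i\cap U_j$, only on $X_f\cap U_i\cap U_j=(U_i\cap U_j)_f$. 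This is exactly where part (i) is used as a separatedness input: since $U_i\cap U_j$ is again $s$-noetherian by Lemma \ref{trevesi} and $\mathcal{L}^{\otimes n}\otimes\mathcal{F}$ is quasi-coherent, applying (i) over $U_i\cap U_j$ yields $m_{ij}$ with $f^{m_{ij}}\tau_i=f^{m_{ij}}\tau_j$ there. Taking $m=\max_{i,j}m_{ij}$ over the finitely many pairs, the sections $f^m\tau_i\in\Gamma(U_i,\mathcal{L}^{\otimes(n+m)}\otimes\mathcal{F})$ now agree on all overlaps and glue to a global section whose restriction to $X_f$ is $f^{n+m}t$; so $N:=n+m$ works. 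The only genuinely delicate points are the bookkeeping of the line-bundle twist under multiplication by $f$ and the self-referential use of (i) inside (ii), both controlled by the finiteness of the cover.
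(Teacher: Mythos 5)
Your proof is correct and takes essentially the same route as the paper's: reduce to a finite affine cover on which $\mathcal{L}$ is trivial, settle the affine case by the basic properties of localisation of $M$-sets, prove (i) by taking the maximum of the local exponents and gluing, and prove (ii) by extending $f^{n_i}t$ locally and then invoking part (i) on the pairwise intersections to make the local extensions agree. If anything, your write-up is more careful than the paper's, which leaves the application of (i) to the overlaps $U_i\cap U_j$ implicit and, in the gluing step of (i), misstates where the sections coincide (they agree on each $X_i$ and hence on all of $X$, not merely on $X_f$).
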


\begin{proof} First consider the case when $X=\Spec(M)$ is affine. Any line bundle (or, more generally, any vector bundle) is trivial in this case, \cite{p2}. Hence, $\mathcal{L}\simeq \O_X$ and so, we can assume that $f\in M$ and $\mathcal{F}=\tilde{A}$ for an $M$-set $A$. Since $X_f=D(f)$, the restriction of $\mathcal{F}$ on $X_f$ is $\tilde{A}_f$, the quasi-coherent sheaf associated to the $M_f$-set $A_f$. The result now follows from the basic properties of localisation.

For the general case, we choose a finite open cover of $X$ by affine subschemes $X_1, \cdots X_m$.  In this case, $X_f=X_{1f}\cup \cdots \cup X_{mf}$.

To see part i), observe that for each $1\leq i \leq m$, there exists a $k_i$  such that the restrictions of $f^{k_i}s_1$ and $f^{k_i}s_2$ on $X_{if}$ coincide. Let $k={\sf max}_{1\leq i\leq m}k_i$ be the maximum of the $k_i$-s. We see that $f^ks_1$ and $f^ks_2$ agree with each other on every $X_{if}$. Hence, they agree on $X_f$. This proves the first assertion.

For the second part, choose a natural number $n_i$ for each $1\leq i \leq m$, such that $f^{n_i}t$ has an extension to $X_i$. We take $n'$ to be the maximal among $n_1,\cdots, n_m$. There exists an element $t_i\in\Gamma(X_i, \mathcal{L}^{\otimes n'}\otimes\mathcal{F})$ for each $1\leq i\leq m$, such that $t_i=f^{n'}t$ restricted to $X_{if}$. The first part of this lemma says that there exists an integer $n>0$, such that $f^nt, \cdots, f^nt_m$ glue to a single global element, which restricts to $f^nt$.
\end{proof}

\section{The points of the topos $\set_M$}\label{poaff}

The aim of this Section is to study the topos points of $\set_M$. We are able to relate such points with prime ideals in the commutative case in Proposition \ref{inj}. If $M/M^\times$ is additionally a finitely generated monoid, the topos points correspond exactly to the localisations $M_\p$ of $M$ with its prime ideals $\p\in\Spec(M)$. In particular, there is a one-to-one relation between the isomorphism class of the topos points of $\set_M$ and the actual points of the topological space $\Spec(M)$. 

\subsection{Filtered $M$-sets}

A left $M$-set $A$ is called \emph{filtered} provided the functor 
$$(-)\otimes_M A:_M\set\rightarrow \set$$ 
commutes with finite limits. Recall that it always commutes with all colimits. The collection of all filtered left $M$-sets is of course a full subcategory of the topos $_M\set$, which we denote by $_M\F$. According to Diaconescu's theorem \cite[Theorem VII. 2 on p. 381]{mm}, one has an equivalence of categories
$$\, _M\F \xrightarrow{\sim} \Pts(\set_M),$$
which sends a filtered left $M$-set $A$ to the (geometric) morphism $f_A:\set\rightarrow \set_M$. Here,
$$(f_A)^\bullet=(-)\otimes_MA:\set_M\rightarrow \set \quad {\rm and} \quad (f_A)_\bullet= \Hom_\set(A,-):\set\rightarrow \set_M.$$
The right $R$-set structure on $ \Hom_\set(A,B)$ is given by $(h\cdot m)(a)=h(ma)$, $h\in  \Hom_\set(A,B)$. The set of isomorphism classes of left filtered $M$-sets will be denoted by $_M\mathsf{F}$.
  
The following well-known fact \cite[p.24]{m} is a very useful tool for checking whether a given $M$-set is filtered.

\begin{Le}\label{Fcond} A left $M$-set $A$ is filtered if and only if the following three conditions hold:
\begin{itemize}
\item[(F1)] $A\not=\emptyset$.
\item[(F2)] Let $m_1,m_2\in M$, $a\in A$ and assume they satisfy the condition
$$m_1a=m_2a.$$
There exist $m\in M$ and $\tilde{a}\in A$ such that $m\tilde{a}=a$ and $m_1m=m_2m$.
\item[(F3)] Let $a_1,a_2\in A$. There are $m_1,m_2\in M$ and $a\in A$ such that $m_1a=a_1$ and $m_2a=a_2$.
\end{itemize}
\end{Le}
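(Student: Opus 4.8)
The goal is to characterise filtered left $M$-sets via the three conditions (F1)--(F3), which amounts to unwinding what it means for $(-)\otimes_M A$ to preserve finite limits. The plan is to exploit the fact that finite limits are generated by the terminal object, binary products, and equalisers; since every functor of the form $(-)\otimes_M A$ automatically preserves the terminal object $\emptyset$ (the empty coproduct) only in the trivial way, the real content is preservation of the terminal $M$-set $*$, of pullbacks, and of finite products. Concretely, I would show that preservation of finite limits is equivalent to preservation of (i) the terminal object, (ii) binary products, and (iii) equalisers of parallel pairs, and then translate each of these into the elementary conditions on $A$.

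First I would handle the ``only if'' direction. Assuming $A$ is filtered, I would test the functor against specific limits. Applying $(-)\otimes_M A$ to the terminal $M$-set $*$ (the one-point $M$-set) must yield the terminal set, a singleton; this forces $* \otimes_M A$ to be nonempty, which unwinds to (F1), $A\neq\emptyset$. Next, preservation of binary products: the canonical map $(B\times_M C)\otimes_M A \to (B\otimes_M A)\times(C\otimes_M A)$ being a bijection, tested on the free $M$-sets $B=C=M$, forces any two elements $a_1,a_2$ of $A$ (seen as $1\otimes a_1$, $1\otimes a_2$ in $M\otimes_M A \cong A$) to have a common ``lower bound'' in the sense of (F3). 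Finally, preservation of equalisers, tested against the pair of maps $M\rightrightarrows M$ given by right multiplication by $m_1,m_2$, encodes (F2): an equality $m_1 a = m_2 a$ in $A$ must, after tensoring, come from the equaliser, which forces the existence of $m,\tilde a$ with $m\tilde a = a$ and $m_1 m = m_2 m$.

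For the ``if'' direction I would run the standard argument that (F1)--(F3) make the comma/index category of $A$ filtered, so that $(-)\otimes_M A$ is a filtered colimit of representables and hence commutes with finite limits. More precisely, writing $A$ as a colimit over its category of elements, each representable $\Hom_M(M,-)\cong \id$ preserves finite limits, and a filtered colimit of finite-limit-preserving functors into $\sc$ again preserves finite limits because in $\sc$ filtered colimits commute with finite limits. The conditions (F1)--(F3) are exactly the statement that the indexing category is filtered: (F1) gives nonemptiness, (F3) gives the connectedness/upper-bound property for objects, and (F2) gives the coequalising property for parallel arrows. I would verify each of these three filteredness axioms from the corresponding condition by a direct diagram chase.

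The step I expect to be the main obstacle is the precise bookkeeping in the equaliser computation for (F2), since one must carefully identify how an element of $A$ satisfying $m_1 a = m_2 a$ corresponds to an element of the tensor product with the equaliser $M$-set, and conversely produce the witnessing data $(m,\tilde a)$; the subtlety is that the equivalence relation defining $\otimes_M$ can identify pairs in nonobvious ways, so one has to track representatives carefully. The other directions, particularly the colimit interchange argument in the converse, are standard and I would cite the general fact that filtered colimits commute with finite limits in $\sc$ rather than reprove it.
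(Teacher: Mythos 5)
Your proposal is correct, but there is nothing in the paper to compare it against: the paper does not prove this lemma, it states it as a well-known fact with a citation to Moerdijk's notes \cite[p.24]{m}. What you have written out is essentially the standard proof of that cited fact (the flat-functor/Diaconescu characterisation): extract (F1), (F3), (F2) by testing preservation of the terminal $M$-set, of the product $M\times M$, and of the equaliser of a parallel pair $M\rightrightarrows M$ of multiplication maps; conversely, observe that (F1)--(F3) say exactly that the category of elements of $A$ is (co)filtered, write $(-)\otimes_M A$ as a colimit, indexed by that category, of copies of the forgetful functor $\Hom_{\set_M}(M,-)$, and invoke the interchange of filtered colimits and finite limits in $\set$. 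So your route is the same as the one the paper delegates to the literature, and it buys the reader a self-contained argument where the paper has only a pointer.

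Three small repairs before this could stand as a proof. First, the aside that $(-)\otimes_M A$ ``automatically preserves the terminal object $\emptyset$'' conflates initial and terminal: $\emptyset$ is the initial object (the empty coproduct); the terminal $M$-set is the one-point set, and its preservation is genuine content (it forces $A\neq\emptyset$, and indeed also connectedness of the action, which follows from (F1) together with (F3)). Second, since the functor $(-)\otimes_M A$ is defined on right $M$-sets while $A$ is a left $M$-set, the $M$-equivariant endomaps of $M$ are the left multiplications $x\mapsto m_ix$, not right multiplications; with that reading your equaliser test is the right one, and note that to extract (F2) you only need surjectivity of the comparison map from $E\otimes_M A$ to $\{a\in A \mid m_1a=m_2a\}$, so the representative-chasing you flag as the main obstacle never actually arises. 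Third, mind the variance in the converse: with (F2) and (F3) as stated, it is the opposite of the category of elements of $A$ that is filtered, and it is over that opposite category that $A$ (hence $(-)\otimes_M A$) decomposes as a colimit; once the directions are fixed, your colimit-interchange argument goes through verbatim.
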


We give the following basic examples to help illustrate the category of topos points of $\set_M$.

\begin{Exm} Let $M=\{1\}$ be the trivial monoid. Our topos is just the category of sets in this case. As such, condition (F2) always holds. Any filtered $M$-set $A$ must have at least one element by (F1), and by (F3), it can have at most one. Thus, $\Pts(\set)$ is equivalent to the category with one object and one morphism. So, $\set$ has a unique point, up to a canonical isomorphism.
\end{Exm}

\begin{Exm}\label{t-233} Let $G$ be a group and $A$ a filtered $G$-set. We have the $G$-set homomorphism
$$\lambda_a:G\rightarrow A, \ \ \ \lambda_a(g)=ga$$
for any $a\in A$. Assume $\lambda_a(g)=\lambda_a(h)$. That is, $ga=ha$. It follows from (F2) by taking $m_1=g$ and $m_2=h$, that $gm=hm$ for some $m\in G$. We obtain $g=h$ since $G$ is a group and hence, $\lambda_a$ is injective.

Take $x\in A$ and use condition (F3) for $a_1=a$ and $a_2=x$. There are $m_1,m_2\in G$ and $a_0\in B$, such that $m_1a_0=a$ and $m_2a_0=x$. Thus, $x=m_2m_1^{-1}b=\lambda_b(m_2m_1^{-1})$ and  $\lambda_b$ is surjective. We have shown that the category $\Pts(\set_G)$ is equivalent to $G$, considered as a one object category.
\end{Exm}

\begin{Exm}\label{ex1} Let us consider $M$ as a left $M$-set. The action is obviously given by the multiplication in $M$. Condition (F1) holds since $1\in M$. To see that (F2) is satisfied, take $m=a$ and $\tilde{a}=1$. Similarly, we can take $m_1=a_1,m_2=a_2$ and $a=1$ for (F3). We call this the \emph{trivial point} of $\set_M$.
\end{Exm}

\begin{Exm} Let $M_1$ and $M_2$ be monoids. The topos $\set_{M_1\times M_2}$ is the 2-categorical product  (often called 2-product or pseudo-product) of $\set_{M_1}$ and $\set_{M_2}$ in the $2$-category of topoi, see \cite[p.419, Exercise VII.14]{mm}. Hence,
$$\Pts(\set_{M_1\times M_2})\simeq \Pts(\set_{M_1})\times\Pts(\set_{M_2})$$
and therefore
$${\sf F}_{M_1\times M_2}\simeq {\sf F}_{M_1}\times {\sf F}_{M_2}.$$
\end{Exm}

\begin{Le}\label{sk_ex} Let $f:M\rightarrow N$ be a homomorphism of monoids. If $A$ is a filtered $M$-set, then
$$f_!(A):=A\otimes_MN$$
is a filtered $N$-set. Here, $N$ is considered as an $M$-set via the homomorphism $f$. This yields a functor
$$f_!:\, \F_M\rightarrow\,\F_N,$$
and hence a functor
$$f_*:\Pts(\set_M)\rightarrow \Pts(\set_N).$$
\end{Le}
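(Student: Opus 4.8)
The plan is to sidestep a direct verification of conditions (F1)--(F3) and instead recognise $f_!$ as composition with the geometric morphism $\set_f$ that has already been constructed. Recall that $\set_f=(f^\bullet,f_\bullet):\set_M\to\set_N$ is a geometric morphism, so its inverse image $f^\bullet:\set_N\to\set_M$ preserves finite limits. By Diaconescu's theorem, a filtered left $M$-set $A$ corresponds to the point $f_A:\set\to\set_M$ with $(f_A)^\bullet=(-)\otimes_M A$. I would then form the composite geometric morphism
$$\set\xrightarrow{f_A}\set_M\xrightarrow{\set_f}\set_N,$$
which is again a geometric morphism, hence a point of $\set_N$. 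The entire content of the lemma reduces to identifying the filtered $N$-set attached to this composite point with $f_!(A)$.

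For the identification I would establish a natural isomorphism
$$Y\otimes_N f_!(A)\ \cong\ f^\bullet(Y)\otimes_M A$$
for every right $N$-set $Y$. This is pure tensor bookkeeping: using associativity of the tensor product and the canonical isomorphism $Y\otimes_N N\cong f^\bullet(Y)$, one obtains $Y\otimes_N(N\otimes_M A)\cong (Y\otimes_N N)\otimes_M A\cong f^\bullet(Y)\otimes_M A$. Consequently the inverse image of the composite point is
$$(-)\otimes_N f_!(A)\ \cong\ \bigl((-)\otimes_M A\bigr)\circ f^\bullet,$$
a composite of two functors each preserving finite limits: $f^\bullet$ by the defining axioms of a geometric morphism, and $(-)\otimes_M A$ precisely because $A$ is filtered. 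Since this composite also preserves all colimits, it is the inverse image functor of a point of $\set_N$, and evaluating it at the generator $N$ returns $f_!(A)$. By the version of Diaconescu's criterion for $N$-sets this forces $f_!(A)$ to be a filtered $N$-set.

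Functoriality is then automatic from this viewpoint: $f_!$ sends a morphism of filtered $M$-sets to the induced morphism of tensor products, and under the equivalence $\F_M\simeq\Pts(\set_M)$ the assignment $A\mapsto f_!(A)$ corresponds exactly to post-composition with the fixed geometric morphism $\set_f$. Post-composition is a genuine functor $\Pts(\set_M)\to\Pts(\set_N)$, and this is the desired $f_*$.

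The step I expect to demand the most care is the left/right bookkeeping in the isomorphism $Y\otimes_N f_!(A)\cong f^\bullet(Y)\otimes_M A$: one must treat $N$ as an $(N,M)$-biset, with left multiplication and right action through $f$, and keep the module sides consistent so that the isomorphism is $N$-equivariant and natural in $Y$. Everything else is a formal consequence of the fact that composites of geometric morphisms are geometric morphisms, combined with Diaconescu's theorem as already recalled.
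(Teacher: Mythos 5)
Your proposal is correct and is essentially the paper's own proof: the paper likewise reduces everything to the isomorphism $(A\otimes_M N)\otimes_N X\simeq A\otimes_M f^\bullet(X)$ and concludes that the tensor functor of $f_!(A)$ preserves finite limits because it is the composite of $f^\bullet$ (restriction of scalars) and $(-)\otimes_M A$, the latter preserving finite limits since $A$ is filtered. Your additional framing via composition of geometric morphisms and evaluation at the generator $N$ is just a repackaging of that same computation, not a different argument.
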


\begin{proof} Take $X$ to be an $N$-set. We need to show that the functor
$$X\mapsto f_!(A)\otimes_N X$$
commutes with finite limits. But this functor is the same as 
$$X\mapsto (A\otimes_M N)\otimes_NX \simeq A\otimes_M f^*(X).$$
Both $f^*:_N\set\rightarrow _M\set$ and $(-)\otimes_M A:_M\set\rightarrow \set$ commute with finite limits. As such, the composite functor also commutes.
\end{proof}

\begin{Co}\label{s1} Let $f:M\rightarrow N$ be an injective monoid homomorphism. Any filtered $M$-set is isomorphic to an $M$-subset of a filtered $N$-set.
\end{Co}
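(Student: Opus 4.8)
The plan is to take the required filtered $N$-set to be $f_!(A)=A\otimes_M N$, which is filtered by Lemma \ref{sk_ex}, and to realise $A$ inside it through the canonical map $\iota\colon A\to A\otimes_M N$, $a\mapsto a\otimes 1$, where $A\otimes_M N$ is regarded as an $M$-set by restricting its $N$-action along $f$. First I would verify that $\iota$ is a homomorphism of $M$-sets: this is immediate from the defining relation $am\otimes n=a\otimes f(m)n$ of the tensor product, which gives $\iota(am)=(am)\otimes 1=a\otimes f(m)=\iota(a)m$. Consequently the image $\iota(A)$ is an $M$-subset of $f_!(A)$, and it remains only to see that $\iota$ is injective, for then it identifies $A$ with that $M$-subset.

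For injectivity I would argue categorically rather than by hand. Since $f$ is injective, the homomorphism $f\colon M\to N$ is a monomorphism in the category of $M$-sets, where $N$ is viewed as an $M$-set via $f$. Because $A$ is filtered, tensoring with $A$ over $M$ preserves finite limits, and any functor preserving finite limits preserves monomorphisms. Hence $A\otimes_M M\to A\otimes_M N$ is injective. Composing with the canonical isomorphism $A\cong A\otimes_M M$ (which sends $a$ to $a\otimes 1$) identifies this map with $\iota$, so $\iota$ is injective. Combined with Lemma \ref{sk_ex}, this exhibits $A$ as the $M$-subset $\iota(A)$ of the filtered $N$-set $f_!(A)$, which is exactly the assertion.

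The point to handle with care is the left/right $M$-set bookkeeping, so that $A\otimes_M N$ genuinely carries compatible $N$- and $M$-actions and the isomorphism $A\cong A\otimes_M M$ is one of $M$-sets; once the conventions of Lemma \ref{sk_ex} are fixed this is routine. I would also note that injectivity of $\iota$ as a map of underlying sets is enough, since the forgetful functor to $\sc$ reflects monomorphisms. An alternative, more hands-on route to injectivity would avoid the preservation-of-monomorphisms principle and instead use the zig-zag description of equality in $A\otimes_M N$ together with conditions (F1)--(F3) of Lemma \ref{Fcond}; I expect this to be the technically heaviest step, which is precisely why the clean argument via the flatness (filteredness) of $A$ is preferable.
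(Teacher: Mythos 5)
Your proposal is correct and follows essentially the same route as the paper: the filtered $N$-set is $A\otimes_M N$ (filtered by Lemma \ref{sk_ex}), and injectivity of $A\simeq A\otimes_M M\to A\otimes_M N$ follows because the filtered $M$-set $A$ makes $A\otimes_M(-)$ preserve finite limits, hence monomorphisms, applied to the inclusion $M\to N$. The extra bookkeeping you supply (that $\iota$ is an $M$-set map and that monomorphisms in these categories are the injective maps) is a harmless elaboration of the same argument.
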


\begin{proof} Let $A$ be a filtered $M$-set. By Lemma \ref{sk_ex}, $A\otimes_MN$ is a filtered $N$-set. The functor $A\otimes_M(-)$ respects injective morphisms since $A$ is filtered. We apply it to the inclusion $M\rightarrow N$ to obtain that
$$A\simeq A\otimes_MM\longrightarrow A\otimes_MN$$
is injective and we are done.
\end{proof}

\subsection{The case of commutative monoids}

From here on onwards, we will assume that $M$ is a commutative monoid. As such, we will no longer make any distinction between left and right $M$-sets. The category of $M$-sets will be written as $\set_M$ and the action of $M$ on $A$ will be written as $ma$.

\begin{Pro}\label{ftp} Let $M$ be a monoid and $A,B$ filtered $M$-sets. Then $A\otimes_MB$ is a filtered $M$-set as well.
\end{Pro}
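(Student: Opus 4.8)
The plan is to verify that $A\otimes_M B$ satisfies the three conditions (F1)--(F3) of Lemma \ref{Fcond}, using the commutativity of $M$ throughout. Since both $A$ and $B$ are filtered, each satisfies (F1)--(F3) individually, and the strategy is to lift these properties through the tensor product. A typical element of $A\otimes_M B$ is written $a\otimes b$, and I will repeatedly use the defining relation $am\otimes b=a\otimes mb$ together with commutativity of $M$ (so that, e.g., $m(a\otimes b)=ma\otimes b=a\otimes mb$ makes sense unambiguously).

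First, (F1) is immediate: since $A\neq\emptyset$ and $B\neq\emptyset$ by (F1) for each factor, we may pick $a\in A$ and $b\in B$, so $a\otimes b\in A\otimes_M B$ is a genuine element and $A\otimes_M B\neq\emptyset$. Next I would treat (F3), which I expect to be the easier of the two remaining conditions. Given two elements $a_1\otimes b_1$ and $a_2\otimes b_2$, I apply (F3) in $A$ to $a_1,a_2$ to get $m_1,m_2\in M$ and $a\in A$ with $m_1a=a_1$, $m_2a=a_2$, and (F3) in $B$ to $b_1,b_2$ to get $n_1,n_2\in M$ and $b\in B$ with $n_1b=b_1$, $n_2b=b_2$. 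Then, using commutativity to move scalars across the tensor, one computes that the element $a\otimes b$ together with the scalars $m_1n_1$ and $m_2n_2$ witnesses (F3) for the pair $a_i\otimes b_i$; concretely $(m_1n_1)(a\otimes b)=m_1a\otimes n_1 b=a_1\otimes b_1$, and similarly for the second.

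The main obstacle is (F2), because the equivalence relation defining $A\otimes_M B$ makes the hypothesis $m_1(a\otimes b)=m_2(a\otimes b)$ weaker than a pointwise equality in $A\times B$. The hypothesis only tells us that $m_1 a\otimes b$ and $m_2 a\otimes b$ become identified after passing to the quotient by $\sim$, i.e. they are connected by a finite zig-zag of the generating relations $(am,b)\sim(a,mb)$. The careful part of the argument is to unwind this zig-zag: I would analyse a chain of elementary moves connecting $(m_1 a,b)$ to $(m_2 a,b)$ in $A\times B$ and extract from it, using (F2) for $A$ and for $B$ separately, a single element $\tilde a\otimes\tilde b$ and a scalar $m$ with $m(\tilde a\otimes\tilde b)=a\otimes b$ and $m_1 m=m_2 m$. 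Commutativity is essential here, as it lets me shuttle the monoid elements freely between the two tensor factors while keeping track of which relation is being applied. I anticipate that the cleanest route is to reduce the zig-zag to a controlled normal form, or alternatively to invoke the characterisation of filtered objects as directed colimits (a filtered $M$-set being a filtered colimit of copies of $M$), so that $A\otimes_M B$ is a filtered colimit of the filtered sets $A\otimes_M M\cong A$; the latter colimit description would make the closure of filteredness under $\otimes_M$ a formal consequence and may be the less error-prone presentation of the same fact.
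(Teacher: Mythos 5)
Your verifications of (F1) and (F3) are correct, but the proof has a genuine gap exactly where you yourself locate the difficulty: condition (F2) is never actually established. The hypothesis $m_1(a\otimes b)=m_2(a\otimes b)$ only says that $(m_1a,b)$ and $(m_2a,b)$ are joined by a finite zig-zag of the generating relations $(am,b)\sim(a,mb)$, and the intermediate pairs in such a zig-zag have no special form, so there is no evident induction that converts the chain into the witnesses $m$ and $\tilde a\otimes\tilde b$ that (F2) demands. Saying you ``would analyse a chain of elementary moves'' and ``anticipate'' a normal form is a statement of intent, not an argument; this unwinding is precisely the hard combinatorial content of the proposition, and it is left undone. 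Your fallback suggestion --- write $B$ as a filtered colimit of copies of $M$, so that $A\otimes_M B\cong\colim_i A$ is a filtered colimit of filtered $M$-sets, and conclude because filtered colimits commute with finite limits in $\set$ --- is a sound strategy and could be completed into a proof, but as written it too is only named, not carried out; in particular the fact that a filtered $M$-set is a filtered colimit of copies of $M$ (a Lazard/Diaconescu-type statement) would need to be quoted or proved.

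For comparison, the paper avoids all element chasing: since $M$ is commutative, $(-)\otimes_M B$ is an endofunctor of $\set_M$, and because finite limits in $\set_M$ are computed on underlying sets, filteredness of $B$ makes this endofunctor preserve finite limits; composing with $(-)\otimes_M A:\set_M\rightarrow\set$, which preserves finite limits by filteredness of $A$, shows that the composite $(-)\otimes_M(A\otimes_M B)$ preserves finite limits (it preserves all colimits automatically), which is exactly the definition of $A\otimes_M B$ being filtered. If you want to salvage your write-up, either switch to this two-line composition argument, or commit fully to one of your two sketched routes and carry it out; with Lemma \ref{Fcond} the zig-zag analysis will be the bulk of the work, not a remark.
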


\begin{proof} 
Consider the commutative diagram of topoi:
$$\xymatrix{ \set_M \ar[rr]^{(-)\otimes_M B}\ar[drr]_{(-)\otimes_M (A\otimes_M B) \quad} & & \set_M\ar[d]^{(-)\otimes_MA} \\ 
			 & & \set. }$$
The composition $(-)\otimes_M(A\otimes_MB)$ commutes with finite limits and all colimits since both functors $(-)\otimes_MA$ and $(-)\otimes_MB$ do.
\end{proof}

\subsubsection{From prime ideals to topos points}

In this subsection, we will construct a map $\gamma_M:\Spec(M)\rightarrow \sF_M$ from the set of prime ideals of $M$ to the set of isomorphism classes of points of the topos $\set_M$.

\begin{Le} \label{221} Let $\p$ be a prime ideal of $M$. The localisation $M_\p$ is a filtered $M$-set, where  $M_\p$ is considered as an $M$-set via the canonical monoid homomorphism $M\rightarrow M_\p$.
\end{Le}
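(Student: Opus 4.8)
The plan is to verify the three filteredness conditions (F1), (F2), (F3) from Lemma \ref{Fcond} directly for the $M$-set $M_\p$, using the explicit description of the localisation together with the primality of $\p$. Recall that elements of $M_\p$ are fractions $\frac{m}{s}$ with $m\in M$ and $s\in M\setminus\p$, the action of $M$ being $a\cdot\frac{m}{s}=\frac{am}{s}$.

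Condition (F1) is immediate, since $\frac{1}{1}\in M_\p$, so $M_\p\neq\emptyset$. For (F3), given two elements $\frac{m_1}{s_1},\frac{m_2}{s_2}\in M_\p$, I would put everything over the common ``denominator'' $s_1s_2$: setting $a=\frac{1}{s_1s_2}$ and choosing the monoid elements $m_1s_2$ and $m_2s_1$ of $M$, one checks that $(m_1s_2)\cdot a=\frac{m_1s_2}{s_1s_2}=\frac{m_1}{s_1}$ and similarly $(m_2s_1)\cdot a=\frac{m_2}{s_2}$, which is exactly the required amalgamation. The only real content lies in (F2): suppose $m_1,m_2\in M$ and $\frac{m}{s}\in M_\p$ satisfy $m_1\cdot\frac{m}{s}=m_2\cdot\frac{m}{s}$, i.e. $\frac{m_1m}{s}=\frac{m_2m}{s}$ in $M_\p$. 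By the definition of equality of fractions, there is some $u\in M\setminus\p$ with $m_1m u=m_2m u$ in $M$. The idea is then to take $\tilde a=\frac{1}{su}\in M_\p$ and $m=mu\in M$ (in the notation of (F2)); one verifies that $(mu)\cdot\frac{1}{su}=\frac{mu}{su}=\frac{m}{s}$ recovers the original element, while $m_1(mu)=m_2(mu)$ holds by construction, giving the equality $m_1m=m_2m$ demanded by (F2).

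The step requiring genuine attention — and where primality of $\p$ actually enters — is ensuring that the witnessing element we multiply by is itself a \emph{unit} in $M_\p$, i.e. that its ``denominator data'' stays outside $\p$, so that the construction in (F2) does not inadvertently collapse or land in a degenerate fraction. Since $s,u\in M\setminus\p$ and $\p$ is prime, the product $su$ again lies in $M\setminus\p$ (this is precisely the multiplicative closedness of the complement of a prime ideal), so $\frac{1}{su}$ is a legitimate element of $M_\p$ and the argument goes through. I expect this to be the main obstacle in the sense that it is the one point where the hypothesis ``$\p$ prime'' is indispensable; without it the complement $M\setminus\p$ need not be a submonoid and the localisation $M_\p$ would not even be well-defined, let alone filtered.

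Altogether, once (F1)--(F3) are checked, Lemma \ref{Fcond} yields that $M_\p$ is filtered, which is the assertion of the lemma. I would present the verification of (F2) last and in the greatest detail, since (F1) and (F3) are essentially formal, reserving the emphasis for how multiplicative closedness of $M\setminus\p$ — equivalently, primality of $\p$ — guarantees that the amalgamating element lies in $M_\p$.
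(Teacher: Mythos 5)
Your proof is correct and takes essentially the same approach as the paper: a direct verification of (F1)--(F3), with witnesses that coincide with the paper's (your $\tilde a=\frac{1}{su}$ and element $mu$ for (F2) are exactly the paper's $\tilde a=\frac{1}{s^2t}$ and $m=m_3st$ once one writes $u=st$, and your (F3) choice $a=\frac{1}{s_1s_2}$, $m_1s_2$, $m_2s_1$ is identical). The only cosmetic difference is that you absorb the denominator into the element witnessing equality of fractions, which is harmless precisely because $M\setminus\p$ is multiplicatively closed, as you note.
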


\begin{proof} The set $M_\p$ is not empty since $1\in M$. Hence, (F1) holds. Assume $m_1a=m_2a$ for $m_1,m_2\in M$ and $a\in M_\p$. We can write $a=\frac{m_3}{s}$, where $m_3\in M$ and $s\in M\setminus\p$. There is an element $t\nin \p$, such that $m_1m_3st=m_2m_3st$. This is true since $\frac{m_1m_3}{s}=\frac{m_2m_3}{s}$. Take $m=m_3st$ and $\tilde{a}=\frac{1}{s^2t}$. We see that condition (F2) holds. 

To show (F3), take two elements $a_1=\frac{m_1'}{s_1}$ and $a_2=\frac{m_2'}{s_2}$ in $M_\p$. Put $a=\frac{1}{s_1s_2}$, $m_1=m_1's_2$ and $m_2=m_2's_1$. We see that $m_1a=a_1$ and $m_2a=a_2$, proving the assertion.
\end{proof}

This lemma shows that we have a map
$$\gamma_M:\Spec(M)\rightarrow \sF_M$$
from the prime ideals of a commutative monoid $M$ to the isomorphism classes of the topos points of $\set_M$. As previously shown \cite{p1} however, $\Spec(M)$ is in fact a lattice. As such, it has two intertwined operations. On their own, both of them make it into a commutative monoid. One is simply the union (note that in the monoid world, the union of ideals is again an ideal) and the other is defined as follows: For two prime ideals $\p$ and $\q$, define $\p\Cap\q$ to be the union of all the prime ideals contained in both $\p$ and $\q$. That is,
\begin{equation}\label{pq} \p\Cap\q:=\bigcup_i\p_i \ | \ \p_i\subseteq\p\cap\q.
\end{equation}
The combination of Lemma \ref{ftp} and Example \ref{ex1} show that $\sF_M$ is a commutative monoid. It is straightforward to see that $M_\p\otimes_MM_\q\simeq M_{\p\Cap\q}$. Hence, $\gamma_M$ is a monoid homomorphism. By ii) Lemma \ref{212}, $M_\p$ and $M_\q$  are isomorphic as $M$-sets if and only if $\p=\q$. We sum up our discussion in the following proposition.

\begin{Pro}\label{inj} Let $M$ be a commutative monoid. There exists an injective homomorphism of commutative monoids
$$\gamma_M:\Spec(M)\rightarrow \sF_M,$$
given by $\p\mapsto M_\p$.
\end{Pro}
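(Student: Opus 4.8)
The plan is to establish Proposition~\ref{inj} by assembling the three already-stated ingredients: that $\gamma_M$ lands in filtered $M$-sets (Lemma~\ref{221}), that the target $\sF_M$ carries a commutative monoid structure compatible with $\gamma_M$, and that $\gamma_M$ is injective. Since Lemma~\ref{221} already shows each $M_\p$ is a filtered $M$-set, the map $\gamma_M \colon \Spec(M) \to \sF_M$, $\p \mapsto M_\p$, is well-defined on the level of sets; the remaining work is to promote it to an injective monoid homomorphism.

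First I would pin down the monoid structure on $\sF_M$. By Example~\ref{ex1} the trivial point $M$ (as an $M$-set over itself) is filtered and serves as the unit, and Proposition~\ref{ftp} shows that $\otimes_M$ preserves filteredness, so $\otimes_M$ descends to a binary operation on isomorphism classes $\sF_M$. Associativity, commutativity, and unitality of this operation are inherited from the corresponding standard isomorphisms $A \otimes_M B \simeq B \otimes_M A$, $(A \otimes_M B) \otimes_M C \simeq A \otimes_M (B \otimes_M C)$, and $A \otimes_M M \simeq A$, all of which hold for $M$-sets over a commutative $M$; I would simply cite these. On the source side, $\Spec(M)$ is a commutative monoid under the operation $\Cap$ defined in~\eqref{pq}, with the improper prime $M$ itself (or the maximal prime, depending on convention) as unit; this is the lattice structure from \cite{p1} that the text invokes.

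Next I would verify that $\gamma_M$ is a homomorphism, which reduces to the single natural isomorphism $M_\p \otimes_M M_\q \simeq M_{\p\Cap\q}$ asserted in the text, together with the fact that $\gamma_M$ sends the unit of $\Spec(M)$ to the trivial point $M \simeq M_{M}$ (localisation at the empty complement). For the key isomorphism I would argue that both sides represent the localisation of $M$ inverting exactly the elements lying outside $\p\Cap\q$: tensoring two localisations $M_\p \otimes_M M_\q$ inverts the submonoid generated by $(M\setminus\p)$ and $(M\setminus\q)$, and one checks that a prime is contained in $\p \Cap \q$ precisely when its complement contains this submonoid, matching the universal property of $M_{\p\Cap\q}$. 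Finally, injectivity is handed over directly to part~ii) of Lemma~\ref{212}, which states that $M_\p \simeq M_\q$ as $M$-sets forces $\p = \q$.

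The main obstacle is the compatibility isomorphism $M_\p \otimes_M M_\q \simeq M_{\p \Cap \q}$: it is the one genuinely non-formal claim, since it links the tensor (a colimit construction on $M$-sets) with the lattice operation $\Cap$ (defined via unions of contained primes), and getting the submonoid being inverted to match $M \setminus (\p \Cap \q)$ requires the characterization that $\p \Cap \q = \bigcup\{\p_i : \p_i \subseteq \p \cap \q\}$ is exactly the largest prime whose complement is the saturation of the submonoid generated by $(M\setminus\p)(M\setminus\q)$. Once this identification is in hand the rest is bookkeeping, so I would allocate the care there and treat the monoid axioms and the homomorphism property for the unit as routine.
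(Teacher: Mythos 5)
Your proposal follows the paper's own argument step for step: well-definedness of $\gamma_M$ via Lemma \ref{221}, the two monoid structures via Proposition \ref{ftp} and Example \ref{ex1} on one side and the operation $\Cap$ (with the maximal prime $M\setminus M^\times$ as unit) on the other, the homomorphism property via the isomorphism $M_\p\otimes_M M_\q\simeq M_{\p\Cap\q}$, and injectivity via part ii) of Lemma \ref{212}. The only difference is that you actually sketch why $M_\p\otimes_M M_\q\simeq M_{\p\Cap\q}$ holds — identifying the tensor product with the localisation at the submonoid generated by $(M\setminus\p)(M\setminus\q)$ and $\p\Cap\q$ with the complement of its saturation — a point the paper dismisses as straightforward; your sketch of it is correct.
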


\subsubsection{From topos points to prime ideals}

The aim of this subsection is to prove that the homomorphism $\gamma_M$, defined above, is an isomorphism if $M/M^\times$ is finitely generated. Here, $M^\times$ denotes the subgroup of invertible elements. Recall that for an $M$-set $A$ and an element $m\in M$, we defined the homomorphism $l_m:A\rightarrow A$ by $l_m(a)=ma$.

\begin{Le}\label{212} Let $A$ be an $M$-set.
\begin{itemize} 
\item[i)] The set $\p_A$, or simply $\p$ when there is no ambiguity, defined by
$$\p=\{m\in M \ | \ l_m:A\rightarrow A \ {\rm is \ not \ an \ isomorphism} \},$$
is a prime ideal of $M$. Moreover, the action of $M$ on $A$ extends to an action of  $M_\p$ on $A$. Hence, $A$ is an $M_\p$-set.
\item [ii)] If $A=M_\p$, then $\p_A=\p$.
\end{itemize}
\end{Le}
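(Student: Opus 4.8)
The statement has two parts. For part i), I must verify that $\p = \{m \in M \mid l_m : A \to A \text{ is not an isomorphism}\}$ is a prime ideal, and that the $M$-action on $A$ extends to an $M_\p$-action. For part ii), I must show $\p_{M_\p} = \p$ for the specific $M$-set $A = M_\p$. The plan is to establish part i) by a direct verification using the defining properties of $l_m$, and then deduce part ii) by unwinding what $l_m$ being an isomorphism means when $A = M_\p$.

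First I would prove that $\p$ is an ideal. Since $M$ is commutative, $l_{mn} = l_m \circ l_n = l_n \circ l_m$ for all $m,n \in M$. If $m \in \p$, so that $l_m$ is not an isomorphism, then for any $n \in M$ the map $l_{mn} = l_n \circ l_m$ cannot be an isomorphism either: were it one, $l_m$ would admit a left inverse and, by the commuting factorization $l_{mn} = l_m \circ l_n$, also a right inverse (as maps of sets, being injective resp. surjective), forcing $l_m$ to be an isomorphism, a contradiction. Hence $mn \in \p$, so $\p$ is an ideal. Next, to see $\p$ is prime, suppose $ab \in \p$ but $a \notin \p$ and $b \notin \p$; then $l_a$ and $l_b$ are both isomorphisms, so their composite $l_{ab}$ is an isomorphism, contradicting $ab \in \p$. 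Thus $\p$ is prime. (One should also note $\p \neq M$, i.e. $1 \notin \p$, since $l_1 = \id_A$ is trivially an isomorphism — this is needed for primeness to be non-vacuous.) For the extension to an $M_\p$-action: every $s \in M \setminus \p$ has $l_s : A \to A$ an isomorphism by definition, so one defines the action of $\frac{m}{s}$ by $l_m \circ l_s^{-1}$. I would check this is well-defined on equivalence classes of fractions and respects the monoid structure of $M_\p$, which is routine given that the $l_m$ commute with one another.

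For part ii), I would specialize to $A = M_\p$ and identify exactly when $l_m : M_\p \to M_\p$ is an isomorphism. If $m \in M \setminus \p$, then $\frac{m}{1}$ is already invertible in $M_\p$, so $l_m$ is multiplication by a unit and hence an isomorphism; thus $m \notin \p_{M_\p}$, giving $\p_{M_\p} \subseteq \p$. Conversely, if $m \in \p$, I would argue $l_m$ fails to be an isomorphism on $M_\p$: the element $\frac{1}{1}$ cannot lie in the image, for $\frac{m}{1}\cdot \frac{x}{s} = \frac{1}{1}$ in $M_\p$ would make $m$ invertible in $M_\p$, which is impossible since $m \in \p$ and the units of $M_\p$ are precisely the images of $M \setminus \p$ (an element of $\p$ stays in the maximal ideal $\p M_\p$). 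Hence $l_m$ is not surjective, so $m \in \p_{M_\p}$, giving $\p \subseteq \p_{M_\p}$ and therefore equality.

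The main obstacle I anticipate is the converse inclusion in part ii) — showing that $l_m$ genuinely fails to be an isomorphism for $m \in \p$, rather than merely suspecting it. The clean way is to use that the invertible elements of the local monoid $M_\p$ are exactly the fractions $\frac{u}{s}$ with $u,s \in M \setminus \p$, so that the image of any $m \in \p$ lands in the unique maximal ideal and cannot become a unit; this immediately obstructs surjectivity of $l_m$. The elementary verifications (that $\p$ is an ideal, the well-definedness of the $M_\p$-action, and commutativity bookkeeping) are routine once commutativity is used to make all the $l_m$ commute, so I would keep those brief.
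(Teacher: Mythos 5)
Your proposal is correct and follows essentially the same route as the paper: the ideal property via factoring $l_{mn}$ in both orders using commutativity, primeness via composition of isomorphisms, the $M_\p$-action via $\frac{m}{s}\cdot a := l_m(l_s^{-1}(a))$, and part ii) by observing that $\frac{1}{1}\in\im(l_m)$ would force $m\notin\p$ (your "units of $M_\p$" phrasing is just a repackaging of the paper's direct computation $xyt\notin\p$). The only cosmetic difference is that in the ideal step the paper constructs an explicit two-sided inverse of $l_m$ from the inverse of $l_{xm}$, whereas you deduce bijectivity from injectivity and surjectivity of the factors; both hinge on the same commutation.
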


\begin{proof} i) Take $m\in\p$ and $x\in M$. Assume $l_{xm}$ is an isomorphism and let $g:A\rightarrow A$ be the inverse of $l_{xm}$. Then $l_x\circ g=g\circ l_x$ since $g$ is a map of $M$-sets and $g\circ l_{xm}=\id=l_{xm}\circ g$. We have
$$l_m\circ l_x\circ g=l_{mx}\circ g=l_{xm}\circ g=\id,$$
and
$$l_x\circ g \circ l_m=g\circ l_x\circ l_m=g\circ l_{xm}=\id$$
since $l_{xm}=l_x\circ l_m$. Hence, $l_m$ is an isomorphism, which contradicts our assumption. This proves that $\p$ is an ideal.

To see that it is prime, take $m,n\nin \p$. The composition $l_{mn}=l_m\circ l_n$ is an isomorphism since both $l_m$ and $l_n$ are isomorphisms. Thus, $mn\nin \p$.

For the last statement, take an element $a\in A$ and $x=\frac{m}{s}\in M_\p$. The map $l_s$ is an isomorphism since $s\nin \p$. Put
$$xa:=ml_s^{-1}(a).$$
It is straightforward to check that this defines an action of $M_\p$ on $A$.

ii) Take $s\nin \p$. The map $l_s$ is an isomorphism. Hence, $s\nin \p_A$ and we see that $\p_A\subseteq \p$. Assume, there exists $x\in \p$ such that $l_x$ is an isomorphism. Then $1\in \im(l_x)$. So, $x\cdot \frac{y}{s}=1$ in $M_\p$ for some $y\in M$ and $s\nin \p$. It follows that $xyt\nin \p$ for some $t\nin \p$. We see that $x\nin \p$ since $\p$ is prime. This contradiction implies the result.
\end{proof}

\subsubsection{Conservative $M$-sets}

We call an $M$-set $A$ \emph{conservative}, provided any $m\in M$ for which $l_m:A\rightarrow A$ is an isomorphism, is an invertible element of $M$.

\begin{Le} \label{231} Let $A$ be an $M$-set. Then $A$ is conservative as an $M_\p$-set, where $\p=\p_A$ is the prime ideal defined in Lemma \ref{212}.
\end{Le}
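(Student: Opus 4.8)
The plan is to unwind the $M_\p$-action on $A$ provided by Lemma \ref{212} and read off conservativity directly. Recall from that lemma that $\p = \p_A$ is prime, that $A$ carries an $M_\p$-action, and that for a fraction $x = \frac{m}{s} \in M_\p$ (so $m \in M$, $s \nin \p$) this action is $xa = m\,l_s^{-1}(a)$, the map $l_s$ being invertible on $A$ precisely because $s \nin \p$. Expressed through the original $M$-action maps, this reads $l_x = l_m \circ l_s^{-1}$ as self-maps of $A$.

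First I would take an arbitrary $x = \frac{m}{s} \in M_\p$ and assume $l_x : A \to A$ is an isomorphism. Since $s \nin \p$, the endomorphism $l_s$, and therefore $l_s^{-1}$, is an automorphism of $A$; composing, $l_m = l_x \circ l_s$ is an isomorphism too. But the defining property of $\p = \p_A$ in Lemma \ref{212} is exactly that $l_m$ is an isomorphism if and only if $m \nin \p$. Hence $m \nin \p$.

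Finally I would check that $m \nin \p$ makes $x$ a unit of $M_\p$: as $m$ and $s$ both lie in the multiplicative set $M \setminus \p$, the fraction $\frac{s}{m}$ is a legitimate element of $M_\p$ inverse to $x$. This shows every $x$ acting by an isomorphism is invertible in $M_\p$, which is precisely the assertion that $A$ is conservative over $M_\p$. There is no substantial obstacle here; the lemma is a bookkeeping consequence of Lemma \ref{212}. The one place to stay careful is the observation that the denominator never affects whether $l_x$ is an isomorphism, since the automorphism $l_s^{-1}$ factors out, so that conservativity of the $M_\p$-action is controlled entirely by the numerator, which is exactly the content of the definition of $\p_A$.
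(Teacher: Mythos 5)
Your proof is correct and is essentially identical to the paper's argument: both factor $l_x$ through $l_s^{-1}$ and $l_m$, use invertibility of $l_s$ (since $s \nin \p_A$) to conclude $l_m$ is an isomorphism, and then invoke the definition of $\p_A$ to get $m \nin \p$, hence invertibility of $\frac{m}{s}$ in $M_\p$. The only difference is that you spell out the final step (exhibiting $\frac{s}{m}$ as the inverse), which the paper leaves implicit.
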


\begin{proof} Any element $x\in M_\p$ can be written as $x=\frac{m}{s}$, where $s\nin \p$. We have  $l_x=l_s^{-1}\circ l_m$. Assume $l_x:A\rightarrow A$ is an isomorphism. Then $l_m=l_s\circ l_x$ is also an isomorphism. Thus, $m\nin \p$ and therefore, $x$ is invertible in $M_\p$.
\end{proof}
 
\subsubsection{Source} 

Let $A$ be an $M$-set. We call an element $a\in A$ a \emph{source}, provided $a=mb$ implies that $l_m:A\rightarrow A$ is an isomorphism. Here, $m\in M$ and $b\in A$ .

\begin{Le}\label{fp} Let $\p$ be a prime ideal of $M$ and assume that $M_\p$ is a conservative $M$-set. Then $M_\p\simeq M$ and $1$ is a source of $M_\p$.
\end{Le}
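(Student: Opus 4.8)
The plan is to exploit the conservativity hypothesis together with part ii) of Lemma \ref{212} to pin down the prime ideal $\p$ exactly, and then to observe that the resulting localisation must be trivial. The guiding principle is that conservativity is a very rigid condition: it says no genuinely non-invertible element may act invertibly on $M_\p$, whereas Lemma \ref{212} tells us precisely which elements do act invertibly.

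First I would determine which $m\in M$ induce an isomorphism $l_m:M_\p\to M_\p$. By part ii) of Lemma \ref{212} we have $\p_{M_\p}=\p$, which by the definition of $\p_A$ means that $l_m$ is an isomorphism exactly when $m\notin\p$. The conservativity of $M_\p$ asserts that whenever $l_m$ is an isomorphism, $m$ lies in $M^\times$. Combining the two gives $M\setminus\p\subseteq M^\times$. Conversely, as $\p$ is a proper prime ideal it contains no invertible element, since $u\in\p\cap M^\times$ would force $1=uu^{-1}\in\p$ and hence $\p=M$; therefore $M^\times\subseteq M\setminus\p$. Thus $M\setminus\p=M^\times$, so the multiplicative set $S=M\setminus\p$ at which we localise consists solely of units of $M$.

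Given this, I would show that the canonical homomorphism $M\to M_\p$ is an isomorphism of $M$-sets. Since every $s\in S=M^\times$ is already invertible in $M$, each fraction $\frac{m}{s}$ equals $\frac{ms^{-1}}{1}$, which gives surjectivity; and $\frac{m_1}{1}=\frac{m_2}{1}$ means $m_1s=m_2s$ for some unit $s$, whence $m_1=m_2$, giving injectivity. Hence $M_\p\simeq M$. For the source property I would transport the question along this isomorphism, which sends $1\in M_\p$ to $1\in M$ and intertwines the maps $l_m$ on the two sides; being a source is clearly preserved by $M$-set isomorphisms. In $M$ it then suffices to note that an equation $1=mb$ exhibits $m$ as invertible, so $l_m:M\to M$ is an isomorphism, and therefore $1$ is a source.

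The only genuinely non-formal step, and the one I expect to be the crux, is the first: recognising that conservativity together with Lemma \ref{212} forces the complement of $\p$ to be exactly $M^\times$, so that $\p$ is the unique maximal ideal $M\setminus M^\times$. Once this identification is made, both the triviality of the localisation $M_\p\simeq M$ and the verification of the source condition are entirely routine.
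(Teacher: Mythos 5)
Your proof is correct and takes essentially the same route as the paper: conservativity forces every element of $M\setminus\p$ to be invertible in $M$ (the paper gets this directly from the observation that such elements act invertibly on $M_\p$, which is the same fact you extract from part ii) of Lemma \ref{212}), whence $M\simeq M_\p$ and the source property of $1$ follows. Your additional identification $M\setminus\p=M^\times$, i.e.\ that $\p$ is exactly the maximal ideal, is correct but not needed for the conclusion.
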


\begin{proof} Take $s\in M\setminus\p$. The homomorphism $l_s:M_\p\rightarrow M_\p$, $x\mapsto sx$ is an isomorphism since $s$ is invertible in $M_\p$. The definition of conservativeness implies that $s$ is already invertible in $M$ and as such, $M\simeq M_\p$. It is now clear that the unit, indeed every invertible element, is a source.
\end{proof}

\begin{Le}\label{141a} Let $A$ be a filtered and conservative $M$-set and take an element $a\in A$. The map
$$\lambda_a:M\rightarrow A, \ \ \lambda_a (m)=ma$$
is an isomorphism if $a$ is a source of $A$.
\end{Le}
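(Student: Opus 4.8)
The plan is to show directly that $\lambda_a$ is both injective and surjective as a map of $M$-sets; since a bijective homomorphism of $M$-sets is automatically an isomorphism (its set-theoretic inverse is again $M$-equivariant), this suffices. First I would note that $\lambda_a$ is indeed a homomorphism of $M$-sets, as $\lambda_a(nm)=(nm)a=n(ma)=n\lambda_a(m)$. The whole argument then rests on a single recurring move: whenever the filtering conditions (F2)--(F3) hand us an element $m\in M$ with $a=mb$ for some $b\in A$, the hypothesis that $a$ is a source forces $l_m:A\rightarrow A$ to be an isomorphism, and then conservativeness of $A$ upgrades this to the statement that $m\in M^\times$ is genuinely invertible in $M$. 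This passage from ``$l_m$ is an isomorphism of $A$'' to ``$m$ is invertible in $M$'' is exactly what the two hypotheses, source and conservative, are designed to supply, and it is the conceptual heart of the proof.

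For surjectivity, I would take an arbitrary $x\in A$ and apply condition (F3) of Lemma \ref{Fcond} to the pair $a_1=a$, $a_2=x$, obtaining $m_1,m_2\in M$ and $a'\in A$ with $m_1a'=a$ and $m_2a'=x$. Since $a$ is a source and $a=m_1a'$, the map $l_{m_1}$ is an isomorphism, so $m_1\in M^\times$ by conservativeness. Writing $a'=m_1^{-1}a$ then gives $x=m_2a'=(m_2m_1^{-1})a=\lambda_a(m_2m_1^{-1})$, so $x$ lies in the image of $\lambda_a$.

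For injectivity, suppose $\lambda_a(m_1)=\lambda_a(m_2)$, that is $m_1a=m_2a$. Condition (F2) then provides $m\in M$ and $\tilde a\in A$ with $m\tilde a=a$ and $m_1m=m_2m$. Again, $a=m\tilde a$ together with the source property shows $l_m$ is an isomorphism, whence $m\in M^\times$. Cancelling the invertible $m$ in the equation $m_1m=m_2m$ yields $m_1=m_2$.

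I do not expect a serious obstacle here; the statement is essentially an assembly of the preceding lemmas. The only point requiring care is to invoke the source hypothesis on the correct witnessing factorisation $a=m_ia'$ (respectively $a=m\tilde a$) supplied by the filtering axioms, and then to be disciplined about applying conservativeness to conclude $m_i\in M^\times$ (respectively $m\in M^\times$) rather than merely that $l_{m_i}$ is invertible on $A$. One should also keep straight that the source property is stated for factorisations of $a$ itself, which is precisely the element produced on the left-hand side by (F2) and (F3), so no extra manipulation is needed to bring the hypotheses to bear.
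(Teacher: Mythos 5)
Your proof is correct and follows essentially the same route as the paper's: surjectivity via (F3) applied to the pair $(a,x)$, injectivity via (F2), and in each case the combination ``source $\Rightarrow$ $l_m$ is an isomorphism'' followed by ``conservative $\Rightarrow$ $m\in M^\times$'' to cancel. The only cosmetic difference is that you spell out the bijective-implies-isomorphism step and the equivariance of $\lambda_a$, which the paper leaves implicit.
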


\begin{proof} Take $b\in A$. By the property (F3), there are $c\in A$, $m_1,m_2\in M$, such that $a=m_1c$ and $b=m_2c$. It follows that $m_1$ is invertible since $A$ is a conservative $M$-set and $a$ is a source. Hence, $b=m_2m_1^{-1}a$ and $\lambda_a$ is surjective.
	
Assume $\lambda_a(m_1)=\lambda_a(m_2)$, that is $m_1a=m_2a$. By (F2), there are elements $b\in A$ and $m\in M$, such that $a=mb$ and $m_1m=m_2m$. Since $a$ is a source and $A$ conservative, $m$ must be invertible and thus, $m_1=m_2$. This shows that $\lambda_a$ is injective and hence, an isomorphism.
\end{proof}

\begin{Le} \label{surj} Let $A$ be a filtered $M$-set and  $m\in M$. The map $l_m:A\rightarrow A$, given by $a\mapsto ma$, is an isomorphism if it is surjective.
\end{Le}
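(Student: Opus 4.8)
The plan is to deduce that $l_m$ is injective and then invoke the standard fact that a bijective morphism of $M$-sets is automatically an isomorphism (its set-theoretic inverse is again $M$-equivariant, using that $M$ is commutative, which also guarantees that $l_m$ is a morphism of $M$-sets in the first place). So I assume $l_m$ is surjective, take $a_1,a_2\in A$ with $ma_1=ma_2$, and aim to prove $a_1=a_2$.

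The first step is to bring $a_1$ and $a_2$ to a common ``denominator'' by means of the filtration conditions of Lemma \ref{Fcond}. By (F3) there exist $m_1,m_2\in M$ and $a\in A$ with $a_1=m_1a$ and $a_2=m_2a$. Substituting into $ma_1=ma_2$ yields $(mm_1)a=(mm_2)a$ in $A$. Applying (F2) to the scalars $mm_1,mm_2$ and the element $a$ then produces $\mu\in M$ and $\tilde a\in A$ with $\mu\tilde a=a$ and, crucially, an identity \emph{in the monoid} $M$, namely $mm_1\mu=mm_2\mu$. At this point $a_1=(m_1\mu)\tilde a$ and $a_2=(m_2\mu)\tilde a$.

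The main obstacle is that $M$ need not be cancellative, so the monoid identity $mm_1\mu=mm_2\mu$ cannot simply be ``divided'' by $m$; some extra hypothesis is genuinely needed, as the example $M=\{1,m,0\}$ with $m^2=0$ and $A=M$ (filtered by Example \ref{ex1}) shows, where $l_m$ is neither injective nor surjective. This is exactly where surjectivity of $l_m$ enters: it allows me to write $\tilde a=m\tilde b$ for some $\tilde b\in A$, which converts the monoid-level equation into an equality of elements of $A$. Using commutativity and the action axioms I would then compute
$$a_1=(m_1\mu)\tilde a=(mm_1\mu)\tilde b=(mm_2\mu)\tilde b=(m_2\mu)\tilde a=a_2,$$
the central equality being the monoid identity supplied by (F2). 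This gives injectivity, hence bijectivity, and so $l_m$ is an isomorphism. I expect no difficulty beyond careful bookkeeping of the action axioms.
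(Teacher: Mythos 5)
Your proof is correct and follows exactly the same route as the paper's: apply (F3) to get a common element $a$, apply (F2) to the equation $(mm_1)a=(mm_2)a$ to get a monoid identity, and then use surjectivity of $l_m$ to push that identity back into $A$ (your $\mu,\tilde a,\tilde b$ are the paper's $m_3,b,c$). The added remarks on bijective $M$-maps being isomorphisms and the non-cancellative example $M=\{1,m,0\}$ are sound but not needed.
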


\begin{proof} Let $a_1,a_2\in A$ and assume $ma_1=ma_2$ holds. Property (F3) says that there exist $a\in A$ and $m_1,m_2\in M$, such that $m_1a=a_1, m_2a=a_2$. Thus, $mm_1a=mm_2a$. By (F2), there exist an element $b\in A$ and $m_3\in M$, such that $a=m_3b$ and $mm_1m_3=mm_2m_3$. There exist $c\in A$ such that $b=mc$ since $l_m$ is surjective. We have
$$a_1=m_1a=m_1m_3b=m_1m_3mc=m_2m_3mc=m_2m_3b=m_2a=a_2.$$
Hence, $l_m$ is injective and as such, an isomorphism.
\end{proof}

We will say that a commutative monoid $M$ is \emph{almost finitely generated} if there are is a finite number of non-invertible elements $c_1,\cdots,c_k\in M\setminus M^\times$, such that any $m\in M$ can be written as a product $m=ge_1^{\alpha_1}\cdots e_k^{\alpha_k}$. Here, $g\in M^\times$ is invertible and $\alpha_1,\cdots,\alpha_k\in \mathbb{N}$ are natural numbers. The elements $e_1,\cdots, e_k$ are called \emph{almost generators}. Clearly, this class encompasses any group, as well as any finitely generated monoid. Moreover, if $M$ is almost finitely generated and $\p$ is a prime ideal of $M$, then $M_\p$ is also almost finitely generated. Indeed, this is true for all localisations, since every localisation in a monoid is isomorphic to a localisation with a prime ideal, see \cite{chww}.

An other way of saying that $M$ is almost finitely generated is that $M/M^\times$ is finitely generated. 

\begin{Le} \label{zn} Let $M$ be an almost finitely generated monoid and $A$ a filtered and conservative $M$-set. There exists an element $a\in A$, such that the canonical map $\lambda_a:M\rightarrow A$, $m\mapsto ma$  is an isomorphism of $M$-sets.
\end{Le}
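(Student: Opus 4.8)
The plan is to reduce the statement to the existence of a \emph{source} in $A$ and then to produce one directly. By Lemma \ref{141a}, if $a\in A$ is a source then $\lambda_a:M\to A$ is an isomorphism; since $A$ is filtered and conservative by hypothesis, it therefore suffices to exhibit a single source. The first step is to reformulate what it means to be a source in terms of the almost generators $e_1,\dots,e_k$. I claim that $a$ is a source if and only if $a\notin e_jA$ for every $j$. Indeed, suppose $a\notin e_jA$ for all $j$ and write $a=mb$ with $m\in M$, $b\in A$. If $m$ were not invertible, then writing $m=g\,e_1^{\alpha_1}\cdots e_k^{\alpha_k}$ with $g\in M^\times$ forces some $\alpha_i\geq 1$, whence $a=e_i(\,g\,e_1^{\alpha_1}\cdots e_i^{\alpha_i-1}\cdots e_k^{\alpha_k}\,b\,)\in e_iA$, a contradiction; so $m\in M^\times$ and $l_m$ is an isomorphism, i.e.\ $a$ is a source. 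Conversely, if $a=e_jc$ then $l_{e_j}$ would be an isomorphism, forcing $e_j\in M^\times$ by conservativity, which is absurd.

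Second, I introduce the divisibility preorder on $A$ defined by $x\leq y$ if $y\in Mx$, and record two facts. On one hand, each subset $e_jA$ is \emph{upward closed}: if $x=e_ja$ and $y=mx$, then $y=e_j(ma)\in e_jA$; consequently $U_j:=A\setminus e_jA$ is downward closed. On the other hand, each $U_j$ is nonempty: were $e_jA=A$, the map $l_{e_j}$ would be surjective, hence an isomorphism by Lemma \ref{surj}, and then $e_j$ would be invertible by conservativity, contradicting that $e_j$ is a non-invertible almost generator. Note that (F1) guarantees $A\neq\emptyset$, so the degenerate case $k=0$ (that is, $M$ a group) is settled at once by taking any element, which is then automatically a source.

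Finally, I combine these with the filtering condition. Condition (F3) says precisely that any two elements of $A$ admit a common lower bound for $\leq$, and a routine induction extends this to any finite family. Choosing $x_j\in U_j$ for $j=1,\dots,k$ and a common lower bound $c$ of $x_1,\dots,x_k$, downward-closedness of each $U_j$ gives $c\in U_j$ for every $j$, i.e.\ $c\notin e_jA$ for all $j$. By the first step $c$ is a source, and Lemma \ref{141a} finishes the proof. The one genuinely delicate point, and the main obstacle, is establishing that a source exists at all, i.e.\ that $\bigcap_j U_j\neq\emptyset$; the argument above deliberately circumvents any naive length or grading function (which can fail in the presence of idempotents in $M/M^\times$) by exploiting that the sets $e_jA$ are upward closed while filteredness makes $A$ downward directed, so that finitely many nonempty downward-closed sets necessarily meet in a common lower bound.
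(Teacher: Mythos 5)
Your proposal is correct and is essentially the paper's own argument: the paper likewise uses conservativity together with Lemma \ref{surj} to pick elements $a_i\notin e_iA$, uses (F3) to produce a common ``lower bound'' $a$ of the $a_i$, and shows $a$ is a source via the factorization $m=ge_1^{\alpha_1}\cdots e_k^{\alpha_k}$ — your upward-closedness of $e_jA$ is precisely the paper's step $a_i=m_ia=e_i(m_im'b)$, and Lemma \ref{141a} concludes in both. Your repackaging through the divisibility preorder and the characterization of sources as $A\setminus\bigcup_j e_jA$ is a purely organizational difference, not a different route.
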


\begin{proof} Let $e_1,\cdots, e_k$ be almost generators of $M$. The map $l_{e_i}:A\rightarrow A$ is not an isomorphism for any $i=1,\cdots,k$ since $A$ is conservative. It follows from Lemma \ref{surj} that the maps $l_{e_i}$ are not surjective. Hence, there is an element $a_i\in A$ for every $i$, such that $a_i$ is not in the image of $l_{e_i}$. This means that the equations $e_ix=a_i$ have no solutions in $A$. We know from (F3) that there exists an element $a\in A$ and a finite collection $m_1,\cdots, m_k\in M$, such that
$$a_i=m_ia, \ \ \ i=1,\cdots, k.$$
We claim that $a$ is a source of $A$. Let $a=mb$, where $b\in A$ and $m\in M$. Our assumption that $M$ is almost finitely generated implies that the element $m$ can be written in the form
$$m=ge_1^{\alpha_1}\cdots e_k^{\alpha_k}, \ \ \ \alpha_i\in\N.$$
Assume there exists an $i\in\{1,\cdots,k\}$, such that $\alpha_i>0$. We have $m=e_im'$ and thus,
$$a_i=m_ia=m_imb=m_i(e_im')b=e_ix,$$
where $x=m_im'b$. This contradicts the assumption that $a_i=e_ix$ has no solution. As such, $\alpha_1=\cdots =\alpha_k=0$ and $m=g$ is invertible in $M$. It follows that $a$ is a source. Since $M$ is conservative, $\lambda_a:M\rightarrow A$ is an isomorphism by Lemma \ref{141a}.
\end{proof}

We are now in position to prove our main theorem of this section.

\begin{Th}\label{thm} Let $M$ be an almost finitely generated monoid. There exists an isomorphism of commutative monoids
$$\gamma_M:\Spec(M)\rightarrow \sF_M,$$
given by $p\mapsto M_p$. In particular, there exist a prime ideal $\p$ for any point $f:\set\rightarrow \set_M$ of the topos $\set_M$, such that the functors 
$$f^*:\set_M\rightarrow \set \  \ {\rm and} \ \ (-)_{\p}:\set_M\rightarrow \set$$
are isomorphic.
\end{Th}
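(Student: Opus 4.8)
The plan is to build on Proposition \ref{inj}, which already gives that $\gamma_M$ is an injective homomorphism of commutative monoids; it therefore only remains to prove surjectivity. Via Diaconescu's theorem an isomorphism class in $\sF_M$ corresponds to an isomorphism class of filtered $M$-set, so I would start from an arbitrary filtered $M$-set $A$ and aim to exhibit a prime ideal $\p$ with $A\simeq M_\p$ as $M$-sets. The natural candidate is $\p:=\p_A$, the prime ideal attached to $A$ in Lemma \ref{212}; recall that the same lemma upgrades $A$ to an $M_\p$-set, and Lemma \ref{231} records that $A$ is conservative as an $M_\p$-set.

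The heart of the argument is then to reduce to the situation handled by Lemma \ref{zn} over the localised monoid $M_\p$. Since $M$ is almost finitely generated, so is $M_\p$ (as noted just before Lemma \ref{zn}), and $A$ is conservative over $M_\p$. If I can also show that $A$ is filtered as an $M_\p$-set, then Lemma \ref{zn} produces an element $a\in A$ for which $\lambda_a:M_\p\rightarrow A$ is an isomorphism of $M_\p$-sets; restricting along $M\rightarrow M_\p$ then gives $A\simeq M_\p$ as $M$-sets, that is $[A]=\gamma_M(\p)$, which establishes surjectivity.

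I expect the step ``$A$ is filtered over $M_\p$'' to be the main obstacle, since filteredness is only assumed over $M$. The cleanest route I see is through the localisation morphism $f:M\rightarrow M_\p$ of Section \ref{localisationsection}: writing $A'$ for $A$ viewed as an $M_\p$-set, one has $f^\bullet(A')=A$ because the $M_\p$-action extends the original $M$-action, and the identity $f_!f^\bullet\simeq\id$ established there yields $A'\simeq f_!(A)=A\otimes_M M_\p$ as $M_\p$-sets. Now $f_!(A)$ is a filtered $M_\p$-set by Lemma \ref{sk_ex} applied to $f$, precisely because $A$ is filtered over $M$; hence $A'$ is filtered, as required. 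One could alternatively try to verify (F1)--(F3) for $A$ over $M_\p$ directly, but checking (F2) with denominators is fiddly, so I would avoid that.

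Finally, for the ``in particular'' clause: under Diaconescu's equivalence a point $f:\set\rightarrow\set_M$ has inverse image $f^*\simeq(-)\otimes_M A$ for the corresponding filtered $M$-set $A$. Having shown $A\simeq M_\p$, I would conclude $f^*\simeq(-)\otimes_M M_\p=(-)_\p$, the localisation functor, which is exactly the asserted isomorphism of functors.
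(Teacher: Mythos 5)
Your proposal is correct and follows essentially the same route as the paper's proof: pass via Diaconescu's theorem to a filtered $M$-set $A$, form the prime ideal $\p=\p_A$ of Lemma \ref{212}, observe that $A$ is conservative (Lemma \ref{231}) and filtered as an $M_\p$-set, and conclude $A\simeq M_\p$ from Lemma \ref{zn}, with injectivity supplied by Proposition \ref{inj}. The only difference is that you explicitly justify the step that $A$ is filtered over $M_\p$ (using $f_!f^\bullet\simeq\id$ from the localisation section together with Lemma \ref{sk_ex}), a point the paper's proof asserts without argument, so your write-up is if anything slightly more complete.
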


\begin{proof} Let $A$ be a filtered $M$-set and consider the prime ideal $\p=\p_A$ defined in Lemma \ref{221}. Then $A$ becomes an $M_\p$-set, which is conservative and filtered over $M_\p$. Hence, Lemma \ref{zn} implies that $A$ is isomorphic to $M_\p$ and we are done.
\end{proof}

\subsubsection{$\Delta(P)$: Ordering of prime ideals as topos points} Let $\ta$ be a topos and $P=(P^\bullet,P_\bullet)\in\Pts(\ta)$ a point. Let $\End(P)$ be the monoid of endomorphisms of $P$, which is the monoid of natural transformations of the inverse image functor $P^\bullet$.

Let $X$ be an object of $\ta$, $x\in P^\bullet(X)$ and $\alpha\in\End(P)$. The set $P^\bullet(X)$ has a natural left $\End(P)$-set structure given by
$$\alpha\cdot x:=\alpha(X)(x).$$
It follows that the functor $P$ has a canonical decomposition
$$\xymatrix{ \set\ar@/_1.5pc/[rr]_P\ar[r]^{\mathcal{U}} & \set_{\End(P)}\ar[r]^{\hat{P}} & \ta. }$$
Here, $\mathcal{U}$ is the trivial topos point of $\set_{\End(P)}$, whose inverse image functor is the forgetful functor and $\hat{P}$ is a geometric morphism. Let $Q:\set\rightarrow\set_{\End(P)}$ be a topos point of $\set_{\End(P)}$. The composition $\hat{P}\circ Q$ defines a topos point of $\ta$. The subcategory of $\Pts(\ta)$ obtained in this way will be denoted by $\Delta(P)$. These are the topos points of $\ta$ that factor through $\set_{\End(P)}$. We clearly have $P\in\Delta(P)$. This can be thought of as the `\emph{canonical neighbourhood}' of the topos point $P$ in the `space' of all points of $\ta$.

We aim to show that this agrees with the Zariski topology for almost finitely generated commutative monoids.

Let $M$ be a commutative monoid and $\p,\q\in\Spec(M)$ its prime ideals. Denote by $P,Q:\set\rightarrow \set_{M_\p}$ the topos points associated to these prime ideals. In other words, we have $P^\bullet=-\otimes_MM_\p$ and $Q^\bullet=-\otimes_MM_\q$.

\begin{Le}\label{x4} Let $F:\set\rightarrow \set_{M_\p}$ be a topos point such that the diagram
$$\xymatrix{ \set_M\ar[r]^{-\otimes_MM_\p}\ar[dr]_{-\otimes_MM_\q} & \set_{M_\p}\ar[r]^{F^\bullet} & U \\
			 & \set_{M_\q}\ar[ur]_{U} & }$$
commutes. Here, $U$ denotes the forgetful functor. We have $F^\bullet=-\otimes_{M_\p}M_\q$, where we identify $\q=\q M_\p\in\Spec(M_\p)$.
\end{Le}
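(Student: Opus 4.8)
The plan is to exploit the description of colimit-preserving functors recalled in Section~\ref{020804}: since $F^\bullet\colon\set_{M_\p}\to\set$ is the inverse image of a topos point, it preserves all colimits and is therefore isomorphic to $-\otimes_{M_\p}B$, where $B=F^\bullet(M_\p)$ carries the left $M_\p$-set structure $x\cdot b=F^\bullet(l_x)(b)$, $x\in M_\p$. Thus the whole statement reduces to \emph{identifying $B$, as an $M_\p$-set}, with the localisation $(M_\p)_{\q M_\p}=M_\q$. I would fix once and for all the natural isomorphism $\Phi\colon F^\bullet\circ(-\otimes_M M_\p)\xrightarrow{\sim}U\circ(-\otimes_M M_\q)$ that expresses commutativity of the diagram, both sides being viewed as functors $\set_M\to\set$.

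First I would determine the underlying set of $B$. Evaluating $\Phi$ at the free $M$-set $M$ and using $M\otimes_M M_\p\cong M_\p$ and $M\otimes_M M_\q\cong M_\q$ yields a bijection $B=F^\bullet(M_\p)\cong M_\q$. Before computing the action it is convenient to observe that the hypothesis already forces $\q\subseteq\p$: for $s\notin\p$ the map $l_s\colon M_\p\to M_\p$ is an isomorphism by part~ii) of Lemma~\ref{212}, hence so is $F^\bullet(l_s)$; applying naturality of $\Phi$ to the morphism $l_s\colon M\to M$ of $\set_M$ identifies $F^\bullet(l_s)$ with multiplication by $\tfrac{s}{1}$ on $M_\q$, which must then be an isomorphism, whence $s\notin\q$ again by Lemma~\ref{212}. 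Consequently $M\setminus\p\subseteq M\setminus\q$, the ideal $\q M_\p$ is a genuine prime of $M_\p$, and $(M_\p)_{\q M_\p}=M_\q$.

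The heart of the argument, and the step I expect to be the main obstacle, is to pin down the $M_\p$-action on $B$ rather than merely its underlying set. Here I would write an arbitrary $x\in M_\p$ as $x=\tfrac{m}{s}$ with $m\in M$, $s\notin\p$, and factor the corresponding $M_\p$-set endomorphism as $l_x=l_s^{-1}\circ l_m$ of $M_\p$, exactly as in the proof of Lemma~\ref{231}; this is legitimate because $l_s$ is invertible. Both $l_m$ and $l_s$ arise by applying $-\otimes_M M_\p$ to the endomorphisms $l_m,l_s\colon M\to M$ of the free $M$-set, so naturality of $\Phi$ identifies $F^\bullet(l_m)$ and $F^\bullet(l_s)$ with multiplication by $\tfrac{m}{1}$ and $\tfrac{s}{1}$ on $M_\q$; since $s\notin\q$, the latter is invertible there. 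Hence $F^\bullet(l_x)=F^\bullet(l_s)^{-1}\circ F^\bullet(l_m)$ is, under the bijection $B\cong M_\q$, multiplication by $\tfrac{m}{s}$, that is, by the image of $x$ under the localisation map $M_\p\to M_\q$.

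This shows that the $M_\p$-set $B$ is precisely $M_\q=(M_\p)_{\q M_\p}$ with its action restricted along $M_\p\to M_\q$, and therefore $F^\bullet\cong -\otimes_{M_\p}B=-\otimes_{M_\p}M_\q$, as claimed. I would close by noting a more functorial route that bypasses the explicit action computation: since $f_!f^\bullet\cong\id$ for the localisation $f\colon M\to M_\p$ (Section~\ref{localisationsection}), every $M_\p$-set $Y$ has the form $f^\bullet(Y)\otimes_M M_\p$, so the commuting diagram immediately gives $F^\bullet(Y)\cong f^\bullet(Y)\otimes_M M_\q$ naturally in $Y$, and transitivity of localisation identifies this with $Y\otimes_{M_\p}M_\q$; the hands-on computation above is what makes the $M_\p$-structure on this localisation transparent.
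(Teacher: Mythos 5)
Your proof is correct and takes essentially the same route as the paper's: represent $F^\bullet$ as $-\otimes_{M_\p}B$ with $B=F^\bullet(M_\p)$, then evaluate the commuting diagram at the free $M$-set $M$ to identify $B$ with $M_\q$. The paper's own proof stops at the resulting identification $A\simeq M_\q$; your naturality argument pinning down the $M_\p$-action on $B$ (and deriving $\q\subseteq\p$ along the way, which is needed for $\q M_\p$ to be a prime of $M_\p$) merely fills in details the paper leaves implicit.
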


\begin{proof} We know that the inverse image $F^\bullet$ of $F$ is given by $-\otimes_{M_\p}A$, where $A$ is a filtered $M_\p$-set, and simultaneously, an $\End(F)$-set. This yields the following commutative diagram
	$$\xymatrix{ \set_M\ar[r]^{-\otimes_MM_\p}\ar[dr]_{-\otimes_MM_\q} & \set_{M_\p}\ar[r]^{-\otimes_{M_\p}A} & \set_{\End(F)}\ar[r]^U & \set \\
		& \set_{M_\q}\ar[rru]_U. }$$
	Here, $U$ denotes the forgetful functor. Commutativity implies that
	$$U((-\otimes_MM_\p)\otimes_{M_\p}A)\simeq U(-\otimes_MM_\q).$$
	In particular
	$$(M\otimes_MM_\p)\otimes_{M_\p}A\simeq M\otimes_MM_\q,$$
	and hence, $A\simeq M_\q$.
\end{proof}

This Lemma has the following direct corollary.

\begin{Co}\label{x5} We have $Q\in\Delta(P)$ if and only if $\q\subseteq \p$.
\end{Co}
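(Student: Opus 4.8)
The plan is to read the corollary straight off Lemma \ref{x4} together with the tensor description of $\Delta(P)$. First I would record the identification $\End(P)\simeq M_\p$: natural endomorphisms of $P^\bullet=-\otimes_MM_\p$ correspond to $M$-set endomorphisms of $M_\p$, and each such endomorphism is multiplication by $\phi(1)$, so $\End_M(M_\p)\simeq M_\p$. Consequently $\set_{\End(P)}=\set_{M_\p}$ and $\hat P^\bullet$ is the functor $-\otimes_MM_\p:\set_M\to\set_{M_\p}$ (this is exactly the setup implicit in Lemma \ref{x4}). By the definition of $\Delta(P)$, the statement $Q\in\Delta(P)$ means precisely that there is a topos point $F:\set\to\set_{M_\p}$ with $\hat P\circ F\cong Q$. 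Writing $F^\bullet=-\otimes_{M_\p}A$ for a filtered $M_\p$-set $A$ (via the equivalence $\,_M\F\simeq\Pts(\set_M)$ of Diaconescu), the composite inverse image is
$$(\hat P\circ F)^\bullet = F^\bullet\circ\hat P^\bullet \simeq (-\otimes_MM_\p)\otimes_{M_\p}A\simeq -\otimes_MA,$$
by associativity of the tensor product. Under Diaconescu's equivalence the condition $\hat P\circ F\cong Q$ thus becomes an isomorphism of $M$-sets $A\simeq M_\q$; and unwinding the forgetful functors shows this is exactly the commutativity of the triangle in Lemma \ref{x4}. So the corollary amounts to deciding when that triangle can be completed.

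For the implication $\q\subseteq\p\Rightarrow Q\in\Delta(P)$, I would use that the prime ideals of $M_\p$ are precisely the primes of $M$ contained in $\p$, so $\q M_\p$ is a genuine prime of $M_\p$ and $M_\q\simeq(M_\p)_{\q M_\p}$. Applying Lemma \ref{221} over the base $M_\p$, this localisation is a filtered $M_\p$-set $A$, and the resulting point $F$ satisfies $(\hat P\circ F)^\bullet\simeq -\otimes_MM_\q=Q^\bullet$ by the computation above. Hence $Q\simeq\hat P\circ F\in\Delta(P)$.

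For the converse $Q\in\Delta(P)\Rightarrow\q\subseteq\p$, I would invoke Lemma \ref{x4} directly: a point $F$ witnessing $Q\in\Delta(P)$ makes the relevant triangle commute, so Lemma \ref{x4} gives $F^\bullet\simeq -\otimes_{M_\p}M_\q$, and in particular the $M$-set $M_\q$ carries a compatible $M_\p$-set structure. This forces every element of $M\setminus\p$ to act on $M_\q$ by an isomorphism. By Lemma \ref{212} the set of elements acting non-invertibly on $M_\q$ is the prime ideal $\p_{M_\q}=\q$; therefore $(M\setminus\p)\cap\q=\emptyset$, i.e. $\q\subseteq\p$.

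The routine parts are the tensor juggling and the standard bijection between primes of a localisation and primes below $\p$. The main thing to get right is the bookkeeping of the first paragraph: pinning down $\End(P)\simeq M_\p$ and verifying that membership in $\Delta(P)$ is literally the commuting-triangle hypothesis of Lemma \ref{x4}, after which each implication is a single line. The only genuinely non-formal input is Lemma \ref{212}, which converts the existence of an $M_\p$-structure on $M_\q$ into the inclusion $\q\subseteq\p$; this is the step where the geometry of the prime ideals actually enters, so I expect it to be the conceptual crux rather than a calculational obstacle.
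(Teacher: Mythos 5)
Your proof is correct and follows exactly the route the paper intends: the paper states this as a direct corollary of Lemma \ref{x4}, and your argument is precisely the natural fleshing-out of that — the identification $\End(P)\simeq M_\p$ making membership in $\Delta(P)$ equivalent to the commuting triangle, Lemma \ref{221} applied over $M_\p$ (plus the standard correspondence between primes of $M_\p$ and primes contained in $\p$) for the ``if'' direction, and Lemma \ref{212} to extract $\q\subseteq\p$ from the induced $M_\p$-structure on $M_\q$ for the ``only if'' direction. If anything, your explicit use of Lemma \ref{212} is slightly more careful than the paper, whose Lemma \ref{x4} leaves the inclusion $\q\subseteq\p$ implicit in the identification $\q=\q M_\p\in\Spec(M_\p)$.
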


We will also mention the following little fact, thought it is not needed for the purposes of this paper.

\begin{Le} We have $Q\in\Delta(P)$ if and only if $\Delta(Q)\subseteq \Delta(P)$.
\end{Le}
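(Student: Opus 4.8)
The plan is to reduce everything to Corollary \ref{x5} together with the inclusion relation on prime ideals. Throughout I work, as in the preceding discussion, with $M$ an almost finitely generated commutative monoid, so that by Theorem \ref{thm} every topos point of $\set_M$ is, up to isomorphism, of the form $P_\r$ with inverse image $-\otimes_M M_\r$ for a unique prime ideal $\r\in\Spec(M)$; let $P$ and $Q$ be the points attached to $\p$ and $\q$. The first thing I would record is that Corollary \ref{x5}, read with its two points ranging over all such $P_\r$, yields the uniform statement: for primes $\r,\mathfrak s\in\Spec(M)$ one has $P_\r\in\Delta(P_{\mathfrak s})$ if and only if $\r\subseteq\mathfrak s$. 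Equivalently, under the identification $\Pts(\set_M)\simeq\Spec(M)$ of Theorem \ref{thm}, the neighbourhood $\Delta(P_{\mathfrak s})$ is exactly the set of primes contained in $\mathfrak s$.

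Granting this, the backward implication is immediate, and in fact holds for an arbitrary topos: since $Q\in\Delta(Q)$ (noted in the text as the fact $P\in\Delta(P)$), any inclusion $\Delta(Q)\subseteq\Delta(P)$ forces $Q\in\Delta(P)$. No commutativity or finiteness hypothesis enters here.

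For the forward implication I would argue by transitivity of $\subseteq$. Assume $Q\in\Delta(P)$; by Corollary \ref{x5} this means $\q\subseteq\p$. Let $R\in\Delta(Q)$ be arbitrary, say $R\simeq P_\r$; again by Corollary \ref{x5} we have $\r\subseteq\q$. Combining these gives $\r\subseteq\q\subseteq\p$, whence $\r\subseteq\p$ and therefore $R\in\Delta(P)$. Since $R$ was arbitrary, $\Delta(Q)\subseteq\Delta(P)$, which completes the equivalence.

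The only step requiring genuine care — the main obstacle — is the uniformity claimed in the first paragraph: Corollary \ref{x5} is phrased for a fixed pair of primes, so to characterise membership $R\in\Delta(P)$ for \emph{all} points $R$ one must know that every topos point of $\set_M$ comes from a prime, which is precisely where the almost finite generation of $M$ (through Theorem \ref{thm}) is used. If this hypothesis were dropped, the `hidden' points of Section \ref{connes} would fall outside the prime-ideal dictionary, and the forward implication would instead have to be established directly at the level of topoi, by showing that the canonical factorisation through $\set_{\End(P)}$ composes well — that is, that $Q\in\Delta(P)$ forces $\Delta(Q)$ to factor through $\hat P$ as well. That general statement is genuinely more delicate, but it is not needed for the present purposes.
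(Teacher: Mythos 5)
Your backward direction is exactly the paper's (``one side is clear''), and your forward argument is internally sound --- but it proves a strictly weaker statement than the one in the paper. The lemma sits under the running hypothesis ``Let $M$ be a commutative monoid'', with $P$, $Q$ the points attached to the primes $\p,\q$; no finiteness assumption is made, and the paper's own proof uses none. Your forward implication invokes Theorem \ref{thm} to write an arbitrary $R\in\Delta(Q)$ as $P_\r$ for a prime $\r$, and this is precisely the step that requires $M/M^\times$ to be finitely generated. For a general commutative monoid (for instance the free monoid on an infinite set studied in Section \ref{connes}), $\Delta(Q)$ may contain hidden points that are not prime localisations, and your transitivity argument $\r\subseteq\q\subseteq\p$ never sees them. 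So the case you explicitly set aside is not a dispensable refinement: it is exactly the case in which the lemma says something beyond the combinatorics of $\Spec(M)$.

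Moreover, the general argument you defer as ``genuinely more delicate'' is in fact the paper's proof, and it is short. If $Q\in\Delta(P)$, then Lemma \ref{x4} together with Corollary \ref{x5} produces a geometric morphism $F$ with $F^\bullet=-\otimes_{M_\p}M_\q$, so that $Q^\bullet\simeq F^\bullet\circ(-\otimes_M M_\p)$. Now take any $Q_1\in\Delta(Q)$; by the very definition of $\Delta(Q)$ there is a point $G$ of $\set_{M_\q}$ with $Q_1^\bullet=G^\bullet\circ(-\otimes_M M_\q)$, whence $Q_1^\bullet\simeq\bigl(G^\bullet\circ F^\bullet\bigr)\circ(-\otimes_M M_\p)$, exhibiting $Q_1$ as a point factoring through $\set_{M_\p}$, i.e.\ $Q_1\in\Delta(P)$. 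No classification of the members of $\Delta(Q)$ is needed --- only the factorisation of $-\otimes_M M_\q$ through $-\otimes_M M_\p$, which is available for every commutative $M$. Replacing your appeal to Theorem \ref{thm} by this composition argument repairs the proof and recovers the statement in its intended generality.
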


\begin{proof} One side is clear. Assume $Q\in\Delta(P)$ and let $Q_1\in\Delta(Q)$. There exists a geometric morphism $G:\set\rightarrow \set_{M_\q}$ such that $G^\bullet\circ (-\otimes_MM_\q)=Q_1^\bullet$. Since $Q\in\Delta(P)$, we know from Lemma \ref{x4} and Corollary \ref{x5} that there exists a topos point $F:\set\rightarrow \set_{M_\p}$, with $F^\bullet=-\otimes_{M_\p}M_\q$. In particular, we have $F^\bullet\circ (-\otimes_MM_\p)\simeq-\otimes_MM_\q$. Hence,
$$Q_1^\bullet=G^\bullet\circ(-\otimes_MM_\q)=(G^\bullet\circ F^\bullet)\circ(-\otimes_MM_\p),$$
showing that $Q_1\in\Delta(P)$.
\end{proof}

Let $M$ be a commutative monoid. Consider $M^{sl}=M/\sim$, where $\sim$ is the congruence generated by $m^2=m$ for every $m\in M$. Explicitly, two elements $a,b\in M$ become equivalent under this congruence if there are elements $u,v\in M$ and integers $m,n\in \N$, such that $a^n=bu$ and $av=b^m$. This is the universal semilattice associated to $M$. We have the following result (see \cite{p1} for the first half and Proposition \ref{inj} for the second half):

\begin{Pro}\label{55125} Let $M$ be a commutative monoid. There are injective homomorphisms of commutative monoids
$$M^{sl}\xrightarrow{\alpha_M}\Spec(M)\xrightarrow{\gamma_M}{\sf F}_M,$$
which are isomorphisms when $M$ is finitely generated. The monoid structures are induced by the quotient, the operation $\Cap$ and the tensor product respectively.
\end{Pro}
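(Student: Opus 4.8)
The plan is to build the chain by assembling the right-hand arrow from results proved in this section and citing \cite{p1} for the left-hand arrow, then composing. I would treat the two maps separately, since each carries its own monoid structure (namely $M^{sl}$ the idempotent multiplication, $\Spec(M)$ the operation $\Cap$, and $\sF_M$ the tensor product), before checking that the composite inherits all the stated properties.

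For the arrow $\gamma_M\colon\Spec(M)\to\sF_M$, the bulk of the work is already done. Proposition \ref{inj} provides a well-defined injective homomorphism $\p\mapsto M_\p$: the assignment lands in $\sF_M$ by Lemma \ref{221} (each $M_\p$ is filtered, hence a topos point), it is multiplicative because $M_\p\otimes_M M_\q\simeq M_{\p\Cap\q}$, and it is injective by part ii) of Lemma \ref{212}, which gives $\p_{M_\p}=\p$. To promote injectivity to bijectivity in the finitely generated case, I would observe that a finitely generated monoid is in particular almost finitely generated, so Theorem \ref{thm} applies and $\gamma_M$ is then an isomorphism. No further surjectivity argument is needed.

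For the arrow $\alpha_M\colon M^{sl}\to\Spec(M)$, I would recall the explicit description from \cite{p1}. To an element $m$ one associates its smallest face $F_m=\{x\in M\mid x\text{ divides }m^n\text{ for some }n\}$ and the prime ideal $\p_m=M\setminus F_m$; since $F_{m^2}=F_m$, the assignment $m\mapsto\p_m$ descends to $M^{sl}$. The two facts I would verify (and which \cite{p1} supplies) are, first, that $\alpha_M$ is injective, which holds because $\p_m=\p_{m'}$ forces $F_m=F_{m'}$, and the explicit description of the congruence $\sim$ makes this equivalent to $m\sim m'$; and second, that $\alpha_M$ is multiplicative for $\Cap$, which rests on the identity $F_{mn}=\langle m,n\rangle^{\mathrm{face}}$, so that $\p_m\Cap\p_n$, the largest prime contained in $\p_m\cap\p_n$, coincides with $\p_{mn}$. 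The isomorphism in the finitely generated case reduces to showing every prime of such an $M$ is of the form $\p_m$, a finiteness input I would take from \cite{p1}.

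Finally I would compose. A composite of monoid homomorphisms is a monoid homomorphism, a composite of injections is injective, and a composite of isomorphisms is an isomorphism, so the chain $M^{sl}\xrightarrow{\alpha_M}\Spec(M)\xrightarrow{\gamma_M}\sF_M$ acquires all the asserted properties. The step I expect to be most delicate is bookkeeping rather than analytic: keeping the three distinct monoid structures straight and confirming that each map is a homomorphism for the intended operations. All the genuinely substantial content has been absorbed upstream, into Theorem \ref{thm} for $\gamma_M$ and into \cite{p1} for $\alpha_M$.
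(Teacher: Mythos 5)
Your proposal is correct and follows essentially the same route as the paper, which justifies the statement exactly this way: Proposition \ref{inj} for the arrow $\gamma_M$, Theorem \ref{thm} (via the observation that finitely generated implies almost finitely generated) for its bijectivity, and a citation of \cite{p1} for the arrow $\alpha_M$. Your face-theoretic unpacking of $\alpha_M$ (the sets $F_m$, the identity $\p_{mn}=\p_m\Cap\p_n$, and the characterisation of the congruence) merely spells out what the paper delegates to \cite{p1}, and those verifications are accurate.
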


We note that $\Spec$ is a contravariant functor, whereas $sl$ and $F$ are both covariant functors. The composite map $\gamma_M\circ \alpha_M$ sends an element $[f]\in M^{sl}$ (which can be thought of as a class of an element $f\in M$) to the point of the topos of $M$-sets corresponding to the filtered $M$-set $M_f$. Based on this fact, one easily sees that $\gamma_M\circ \alpha_M$ defines a natural transformation from the functor $M\mapsto M^{sl}$ to the functor $M\mapsto {\sf F}_M$.

We will see in Section \ref{algam} that when $M$ is not finitely generated, neither $\alpha_M$ nor $\gamma_M$ are isomorphisms in general.

\section{Graded monoids and topoi}\label{060804}

From the point of view of monoid schemes, commutative monoids are the dual objects of affine monoid schemes. One can hope that the results of the previous section are just the affine versions of far more general results about monoid schemes. 

We will see in the next section that this is the case, at least for monoid schemes which are open subschemes of `projective monoid schemes'. The latter is based on the theory of graded monoids, much like in classical algebraic geometry. Hence, this section aims at collecting some basic results on graded monoids.

\subsection{Graded sets}\label{grsets}

Let $\{X_i\}_{i\in \Z}$ be a set indexed by the integers. This is called a \emph{graded set} and is dented by $X_*$. The disjoint union ${\sf Tot}(X_*):=\coprod _{i\in \mathbb{Z}}X_i$ is called the  \emph{total} or \emph{underlying set} of the graded set $X_*$.

A \emph{graded map} between $X_*$ and $Y_*$, denoted by $f_*:X_*\rightarrow Y_*$, is given by a family of maps $\{f_i:X_i\rightarrow Y_i\}_{i\in \Z}$. Any such family induces a well defined map ${\sf Tot}(f_*):{\sf Tot}(X_*)\rightarrow {\sf Tot}(Y_*)$ of underlying sets, which sends an element $x\in X_i$ to $f_i(x)$.

The category of graded sets and graded maps is denoted by $_{\sf gr}\set$. It is the product of $\Z$-copies of $\set$. Clearly, ${\sf Tot}: _{\sf gr}\set\rightarrow \set$ is the forgetful functor.

The category $_{\sf gr}\set$  is a topos. The limits and colimits are computed degreewise. Observe that the forgetful functor $\sf Tot$ preserves colimits. But it does not preserves finite products. The terminal object of $_{\sf gr}\set$ is denoted by ${\bf 1}_*$. The $i$-th set ${\bf 1}_i$ is the one element set, say $\{i\}$, for any $i\in \mathbb{Z}$.  It ia clear that ${\sf Tot}({\bf 1}_*)=\mathbb{Z}$.

\subsubsection{Graded product}

Let $X_*$ and $Y_*$ be graded sets. We define the graded product $X_*\boxtimes Y_*$ by
$$(X_*\boxtimes Y_*)_i=\coprod_{j+k=i}X_j\times Y_k.$$
We see that $\boxtimes$ defines a symmetric monoidal structure on the category $_{\sf gr}\set$. The unit object for $\boxtimes$ is $\mathbb{I}_*$, where
$$\mathbb{I}_i=\begin{cases} \emptyset, & i\not=0,\\ \{1\}, & i=0.\end{cases}$$
One easily checks that
$${\sf Tot}(X_*\boxtimes Y_*)\simeq {\sf Tot}(X_*)\times {\sf Tot}(Y_*)$$
and ${\sf Tot}(\mathbb{I}_*)\simeq \{1\}$. In other words, the functor $\sf Tot$ is symmetric monoidal.

\subsubsection{Shifting}\label{shifting}

The category $_{\sf gr}\set$ posses a special class of endofunctors called \emph{shiftings}. Let $X_*$ be a graded set and $i$ an integer. Denote by $X_*(i)$ the graded set given by
$$(X_*(i))_j=X_{i+j}.$$
This yields endofunctors $S_i:$ $_{\sf gr}\set\rightarrow$ $_{\sf gr}\set$, given by $X_*\mapsto X_*(i)$ for all $i\in \mathbb{Z}$. Observe that ${\sf Tot}(X_*(i))={\sf Tot}(X_*)$.

\subsection{Graded monoids}

A monoid object in the symmetric monoidal category $(_{\sf gr}\set,\boxtimes,\mathbb{I})$ is called a \emph{graded monoid}. In other words, a graded monoid is a graded set $M_*$, together with a distinguished element $1\in M_0$ and product maps $M_i\times M_j\rightarrow M_{i+j}$, $i,j\in \Z$. Further, these must satisfy the associativity and the unit condition. It follows that ${\sf Tot}(M_*)$ is a monoid, called the \emph{underlying monoid} of the graded monoid $M_*$. Let $M'_*$ and $M''_*$ be graded monoids. Their graded product $M_*=M_*'\boxtimes M_*''$ has the structure of a graded monoid given by
$$(M'_*\boxtimes M''_*)\boxtimes (M'_*\boxtimes M''_*)\simeq (M'_*\boxtimes M'_*)\boxtimes (M''_*\boxtimes M''_*)\rightarrow(M'_*\boxtimes M''_*).$$
The last map is induced by the graded monoid structures on $M_*'$ and $M_*''$.

Let $M_*$ and $M_*'$ be graded monoids. A graded map $f_*:M_*\rightarrow M_*'$ is called a \emph{homomorphism of graded monoids}, or \emph{graded morphism} for short, if one has $f_{i+j}(xy)=f_i(x)f_j(y)$ for all $i,j\in \Z$, $x\in M_i$ and $y\in M_j$. It is clear that $M_*\mapsto {\sf Tot}(M_*)$ defines a covariant functor from the category of graded monoids to the category of monoids.
 
Let $M_*$ be a graded monoid and $m\in {\sf Tot}(M_*)$. Denote by $|m|$ the unique integer for which $m\in M_{|m|}$. This gives us a map $|-|:M_*\rightarrow \Z$, called the \emph{degree map}. We have $|m_1m_2|=|m_1|+|m_2|$, making it a monoid homomorphism.

Conversely, let $M$ be a monoid and $\mu:M\rightarrow \Z$ a monoid homomorphism. This allows us to see $M$ as the underlying monoid of a graded monoid $M_*$ for which $M_i=\mu^{-1}(i)$.

We can regard $\Z$ and $\N$ as the underlying monoids of graded monoids by considering the identity and the inclusion map respectively. The graded product $\boxtimes$ allows us to do the same for $\Z^d$ and $\N^d$, $d\geq 1$ as well.

A graded map $f_*:M_*\rightarrow M_*'$ is a \emph{morphism of graded monoids} if and only if the diagram
$$\xymatrix{ M_*\ar[rr]^f\ar[dr]_{|-|} & & M_*'\ar[dl]^{|-|}
			 \\ & \Z & }$$
commutes.

It is quite useful to consider a monoid $M$ as a one object category. Covariant (resp. contravariant) functors over this category are nothing but left (resp. right) $M$-sets. One can define a category $\cat(M_*)$ in a similar vain by starting with a graded monoid $M_*$. Objects of $\cat(M_*)$ are integers. The set of homomorphism $\Hom_{\cat(M_*)}(i,j)$ between two objects (integers) $i,j\in \mathbb{Z}$ is $M_{j-i}$. In particular, $\Hom_{\cat(M_*)}(i,j)=\emptyset$ if $i>j$. The composition is induced by the monoid structure of $M_*$.

\subsection{Graded $M_*$-sets}

Let $M_*$ be a graded monoid. A \emph{graded left $M_*$-set} is a graded set $X_*$, together with a morphism $M_*\boxtimes X_*\rightarrow X_*$ satisfying the obvious axioms. Equivalently, it is a collection of maps $M_i \times X_j\rightarrow X_{i+j}, \ (m,x)\mapsto mx$ satisfying the properties:
$$1x=x \quad {\rm and } \quad (mn)x=m(nx) \quad \textnormal{for any} \quad x\in X_i, \ m\in M_k, \ n\in M_j.$$
Graded right $M_*$-sets are similarly defined.  

Let $X_*$ and $Y_*$ be graded left $M_*$-sets. A morphism $f_*:X_*\rightarrow Y_*$ of graded sets is called a \emph{morphism of graded left $M_*$-sets} if $f_{i+j}(mx)=mf_j(x)$ for any $m\in M_i$, $x\in X_j$.  We let $_{M_*}\set$ be the category of graded left $M_*$-sets and graded morphisms of left $M_*$-sets. Accordingly, $ \Hom_{_{{\rm gr}{M_*}}\set}(X_*,Y_*)$ denotes the set of all graded morphisms of left $M_*$-sets from $X_*$ to $Y_*$. Since ${\sf Tot}$ is a symmetric monoidal functor, we see that the set ${\sf Tot}(X_*)$ has a natural left ${\sf Tot}(M_*)$-set structure for any graded left $M_*$-set $X_*$. This gives us a functor
$$_{{\rm gr}M_*}\set \rightarrow _{{\sf Tot}(M_*)}\set$$
from graded left $M_*$-sets to left ${\sf Tot}(M_*)$-sets. Likewise also for graded right $M_*$-sets.

\begin{Le}\label{catMs} Assume $M_*$ is a graded monoid. 
\begin{itemize}
\item [i)] Let $X_*$ be a graded left $M_*$-set. Then
$i\mapsto X_i$ extends to a covariant functor
$$\f(X_*):\cat(M_*)\rightarrow \set.$$
Moreover, this assignment is an equivalence of categories:
$$\f:_{{\rm gr}M_*}\set \xrightarrow{\simeq} {\sf Funct}( \cat(M_*), \set).$$
\item[ii)] Let $X_*$ be a graded right $M_*$-set. Then
$i\mapsto X_{-i}$ extends to a contravariant functor
$$\f^{op}(X_*):\cat(M_*) \rightarrow \set.$$
Moreover, this assignment is an equivalence of categories:
$$\f^{op}:\set_{{\rm gr}M_*} \xrightarrow{\simeq} {\sf Funct}(\cat(M_*)^{op}, \set).$$
\end{itemize}
\end{Le}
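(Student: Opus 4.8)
The plan is to exhibit, for each of the two statements, an explicit equivalence between graded $M_*$-sets and functors over the category $\cat(M_*)$, and then check functoriality and the equivalence properties. The key observation is that the definition of $\cat(M_*)$ was rigged precisely so that the data of a graded left $M_*$-set coincides, object by object and morphism by morphism, with the data of a covariant functor $\cat(M_*)\to\set$. So the proof is essentially an unwinding of definitions, and I expect the only genuine work to be in checking that the correspondence respects composition correctly (and, in part ii, that the variance is reversed in the right way).

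For part i), given a graded left $M_*$-set $X_*$, I would define $\f(X_*)$ on objects by $i\mapsto X_i$. On a morphism $m\in M_{j-i}=\Hom_{\cat(M_*)}(i,j)$, I would let $\f(X_*)(m):X_i\to X_j$ be the map $x\mapsto mx$; this lands in $X_j$ precisely because the action sends $M_{j-i}\times X_i$ to $X_{i+(j-i)}=X_j$. The unit axiom $1x=x$ gives $\f(X_*)(\id_i)=\id_{X_i}$, and the associativity axiom $(mn)x=m(nx)$ gives functoriality $\f(X_*)(mn)=\f(X_*)(m)\circ\f(X_*)(n)$, matching the composition in $\cat(M_*)$. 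A morphism $f_*:X_*\to Y_*$ of graded left $M_*$-sets is exactly a natural transformation $\f(X_*)\Rightarrow\f(Y_*)$: the naturality square at $m\in M_{j-i}$ is the condition $f_j(mx)=mf_i(x)$, which is the definition of a morphism of graded $M_*$-sets. This makes $\f$ a functor into ${\sf Funct}(\cat(M_*),\set)$. To see it is an equivalence, I would check it is fully faithful (immediate from the above, since morphisms on both sides are literally the same data) and essentially surjective: given any functor $F:\cat(M_*)\to\set$, set $X_i:=F(i)$ and define the action $M_i\times X_j\to X_{i+j}$ by $(m,x)\mapsto F(m)(x)$, where $m\in M_i=\Hom(j,i+j)$; the functor axioms of $F$ translate back into the unit and associativity axioms, so $X_*$ is a graded left $M_*$-set with $\f(X_*)\cong F$.

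For part ii), the argument is parallel but with attention to variance. Given a graded right $M_*$-set $X_*$, the action has the form $X_j\times M_i\to X_{i+j}$. I would define $\f^{op}(X_*)$ on objects by $i\mapsto X_{-i}$ and on a morphism $m\in M_{j-i}=\Hom_{\cat(M_*)}(i,j)$ by the map $X_{-i}\to X_{-j}$ sending $x\mapsto xm$; here $m\in M_{j-i}$ acts on $X_{-j}$ (note $(-j)+(j-i)=-i$), so $x\mapsto xm$ indeed goes $X_{-j}\to X_{-i}$, giving a \emph{contravariant} assignment. Thus $\f^{op}(X_*)$ is a functor $\cat(M_*)^{op}\to\set$, and the same bookkeeping with the right-action axioms yields functoriality and the identification of morphisms of graded right $M_*$-sets with natural transformations. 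Essential surjectivity and full faithfulness follow exactly as before.

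The main (and only) obstacle is one of \emph{bookkeeping}: keeping the degree shifts and the direction of composition consistent, especially the sign convention $i\mapsto X_{-i}$ in part ii) that is forced by the fact that $\Hom_{\cat(M_*)}(i,j)=M_{j-i}$ is nonempty only for $i\le j$, so that a right action (which lowers or raises degree the opposite way to a left action) must be encoded contravariantly. Once the indexing is pinned down correctly, every verification reduces to a direct comparison of the action axioms with the functor axioms, and no nontrivial computation is required. I would therefore present part i) in full and remark that part ii) follows by the same argument applied to the opposite category, after the index reversal $i\mapsto X_{-i}$.
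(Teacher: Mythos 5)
Your proposal is correct and takes essentially the same approach as the paper: the paper's proof likewise defines $\f(X_*)$ on a morphism $m\in M_{j-i}$ by $x\mapsto mx$ and then exhibits the explicit inverse functor (your essential-surjectivity construction, $X_i:=F(i)$ with $mx:=F(m)(x)$, is exactly that inverse), and it handles part ii) by the same observation that right multiplication by $m\in M_{j-i}$, viewed as a morphism $i\rightarrow j$, yields a map $X_{-j}\rightarrow X_{-i}$, forcing contravariance. The only blemish is the momentary slip in part ii) where you first write the map as $X_{-i}\rightarrow X_{-j}$ before correcting it to $X_{-j}\rightarrow X_{-i}$ within the same sentence; your final bookkeeping is right.
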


\begin{proof} i) We already described $\f(X_*)$ on objects: 
$$(\f(X))(i)=X_i.$$
It act on morphisms of $\cat(M_*)$ as follows: Let $m\in M_{j-i}$. The map
$$(\f(X))(m):(\f(X))(i)=X_i\rightarrow X_j=(\f(X))(j)$$
is given by
$$X_i\ni x\mapsto mx\in X_j.$$
Conversely, take a  covariant functor $F:\cat(M_*)\rightarrow \set$ and define $\fx(F)_*$ to be the graded set $(F(i))_{i\in \Z}$. Moreover, the product $mx$ is defined as $F(m)(x)\in F(j)$ for $m\in M_{j-i}$ and $x\in F(i)$. We obtain the functor
$$\fx: {\sf Funct}( \cat(M_*), \set) \rightarrow {} _{{\rm gr}M_*}\set,$$
which is the inverse of $\f$.

ii) This is quite similar to i). We just remark that the right multiplication by $r\in M_{j-i}$ defines a map $X_{-j}\rightarrow X_{-i}$ when considered as a morphism $i\rightarrow j$.
\end{proof}

The category $\set_{{\rm gr}M_*}$ is a topos, as it is a category of presheaves. In particular, it has limits and colimits, which are given componentwise. For example, if $X_*$ and $Y_*$ are graded right $M_*$-sets, then
$$(X_*\coprod Y_*)_i=X_i\coprod Y_i, \ \ (X_*\times Y_*)_i=X_i\times Y_i.$$
We see that the obvious forgetful functor
$${\sf Tot}: \set_{{\rm gr}M_*} \rightarrow \set_{M_*}$$
respects the coproduct. It does, however, not preserve the product in general.

\subsection{The bifunctor $\boxtimes_{M_*}$}

Let $M_*$ be a graded monoid, $X_*$ a graded right $M_*$-set and $Y_*$ a graded left $M_*$-set. Define $X_* \boxtimes _{M_*} Y_*$ to be the coequaliser of the diagram
$$\xymatrix{ X_* \boxtimes M_*\boxtimes Y_* \ar@<-.5ex>[r] \ar@<.5ex>[r] & X_*\boxtimes Y_*. }$$ 
Here, the first arrow is induced by the right action of $M_*$ on $X_*$. The second arrow is induced by the left action of $M_*$ on $Y_*$. The $0$-th component of  $X_* \boxtimes _{M_*} Y_*$ is denoted by $X_*\odot_{M_*} Y_*$. Thus, $X_*\odot_{M_*} Y_*$ is the set of equivalence classes of pairs $(x,y)$, where $x\in X_i$ and $y\in Y_{-i}$. The equivalence relation is generated by $(um,v)\sim (u,mv)$. Here, $u\in X_{i}$, $m\in M_j$ and $v\in Y_{-i-j}$.  

\begin{Le} Let $M_*$, $X_*$ and $Y_*$ be as above. One has the following isomorphism of sets:
$$i) \hspace{9em} {\sf Tot}(X_* \boxtimes _{M_*} Y_*)={\sf Tot}(X_*)\otimes_{{\sf Tot}(M_*)}{\sf Tot}(Y_*).\hspace{9em}$$
$$ii) \hspace{11em} X_*\odot_{M_*} Y_*\simeq \f^{o}(X_*)\otimes_{\cat(M_*)}\f(Y_*).\hspace{11em}$$

\noindent The right hand side of the second equivalence is the tensor product in the category of functors \cite[p.256]{mm}.
\end{Le}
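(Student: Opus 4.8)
The plan is to treat the two statements separately, in each case reducing the claimed isomorphism to a comparison of coequalisers.

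For part i), I would apply the forgetful functor ${\sf Tot}$ to the defining coequaliser
$$\xymatrix{ X_* \boxtimes M_*\boxtimes Y_* \ar@<-.5ex>[r] \ar@<.5ex>[r] & X_*\boxtimes Y_* }$$
of $X_* \boxtimes_{M_*} Y_*$. Since ${\sf Tot}$ preserves all colimits (Section \ref{grsets}), it carries this coequaliser to a coequaliser in $\set$; and since ${\sf Tot}$ is symmetric monoidal, the two objects become ${\sf Tot}(X_*)\times {\sf Tot}(M_*)\times {\sf Tot}(Y_*)$ and ${\sf Tot}(X_*)\times {\sf Tot}(Y_*)$. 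It then remains to identify the two parallel arrows with the maps induced by the right action of ${\sf Tot}(M_*)$ on ${\sf Tot}(X_*)$ and its left action on ${\sf Tot}(Y_*)$. Granting this, the resulting coequaliser is exactly the one defining the monoid tensor product ${\sf Tot}(X_*)\otimes_{{\sf Tot}(M_*)}{\sf Tot}(Y_*)$, which gives i).

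For part ii), I would unwind the tensor product of functors as a coend. By \cite[p.256]{mm}, the object $\f^{op}(X_*)\otimes_{\cat(M_*)}\f(Y_*)$ is the coequaliser
$$\xymatrix{ \coprod_{i\leq j} M_{j-i}\times X_{-j}\times Y_i \ar@<-.5ex>[r] \ar@<.5ex>[r] & \coprod_i X_{-i}\times Y_i, }$$
where, by Lemma \ref{catMs}, a morphism $m\in M_{j-i}=\Hom_{\cat(M_*)}(i,j)$ acts on $\f^{op}(X_*)$ as the right multiplication $X_{-j}\rightarrow X_{-i}$, $x\mapsto xm$, and on $\f(Y_*)$ as the left multiplication $Y_i\rightarrow Y_j$, $y\mapsto my$. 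Hence the two arrows send $(m,x,y)$ to $(xm,y)$ and to $(x,my)$ respectively. Reindexing the summands by $p=-i$, the coequalised set becomes $\coprod_p X_p\times Y_{-p}$ and the generating relation becomes $(um,v)\sim (u,mv)$ with $u\in X_p$, $m\in M_q$ and $v\in Y_{-p-q}$. This is verbatim the description of $X_*\odot_{M_*}Y_*$, yielding the desired bijection.

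The routine content of i) is coherence bookkeeping: checking that the monoidal comparison isomorphism for ${\sf Tot}$ intertwines the action-induced arrows correctly, which is where I expect the only real care to be needed. In ii) the single point requiring attention is the index matching: one must confirm that summing over morphisms $m\in \Hom_{\cat(M_*)}(i,j)$ corresponds to $|m|=j-i$, and that the two generating relations coincide after the reindexing $p=-i$. Neither difficulty is serious; the substance of the lemma lies entirely in the identifications of $\f$ and $\f^{op}$ already supplied by Lemma \ref{catMs}.
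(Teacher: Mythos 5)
Your proof is correct. The paper states this lemma without any proof (it is treated as a routine consequence of the definitions), so there is nothing to compare against; your verification — applying the colimit-preserving, symmetric monoidal functor ${\sf Tot}$ to the defining coequaliser of $X_*\boxtimes_{M_*}Y_*$ for i), and unwinding the functor tensor product of \cite[p.256]{mm} as a coequaliser over $\Hom_{\cat(M_*)}(i,j)=M_{j-i}$ followed by the reindexing $p=-i$ for ii) — is exactly the implicit argument, and your identifications of the parallel arrows with the ${\sf Tot}(M_*)$-actions and of the generating relation $(um,v)\sim(u,mv)$ are accurate.
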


\begin{De} A graded left $M_*$-set $X_*$ is called \emph{filtered} if the following conditions hold:
\begin{itemize}
\item [i)] There is an integer $i$ such that $X_i\not=\emptyset$.
\item [ii)] Let $a\in X_i$ and $b\in X_j$. There exists an integer $k$ and elements $u\in M_{i-k}$, $v\in M_{j-k}$, $c\in M_k$, such that $uc=a$ and $vc=b$.
\item [iii)] Let $a\in X(i)$ and $u,v\in M_j$ such that $ua=va$. There is an integer $k$ and elements $b\in X(k)$, $w\in M_{i-k}$, such that $uw=vw$ and $wb=a$. 
\end{itemize}
\end{De}

In other words, $X_*$ is filtered if and only if $\f(X)$ is a filtered functor \cite[p386]{mm}. 

\begin{Le} Assume $X_*$ is a filtered graded left $M_*$-set. The graded left $M_*$-set $X_*(i)$ is filtered for any integer $i$.
\end{Le}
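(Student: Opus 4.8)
The plan is to verify the three defining conditions of filteredness for $X_*(i)$ directly, reducing each to the corresponding condition for $X_*$ by a single reindexing of the degree. Throughout, write $Y_* := X_*(i)$, so that $Y_j = X_{i+j}$ by the definition of the shift functor $S_i$ in Section \ref{shifting}. The crucial observation is that $S_i$ merely relabels degrees and leaves the $M_*$-action untouched: an element of $Y_j$ is literally an element of $X_{i+j}$, and the action of $m\in M_{j'-j}$ sending $Y_j\to Y_{j'}$ is the same as the action $X_{i+j}\to X_{i+j'}$ of $m\in M_{(i+j')-(i+j)}$. Consequently each condition transfers after the substitution $k\mapsto k-i$ on the auxiliary integer.

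First I would check (i): since $X_*$ is filtered there is some $m$ with $X_m\neq\emptyset$, so taking $j=m-i$ gives $Y_j=X_m\neq\emptyset$. For (ii), given $a\in Y_p=X_{i+p}$ and $b\in Y_q=X_{i+q}$, I apply condition (ii) for $X_*$ to obtain an integer $k$, elements $u\in M_{(i+p)-k}$, $v\in M_{(i+q)-k}$ and $c\in X_k$ with $uc=a$ and $vc=b$; setting $k':=k-i$ yields $c\in X_k=Y_{k'}$, $u\in M_{p-k'}$ and $v\in M_{q-k'}$, which is precisely the data required by (ii) for $Y_*$. Condition (iii) is entirely analogous: an equation $ua=va$ with $a\in Y_p$ and $u,v\in M_q$ is the very same equation in $X_{i+p}$, so condition (iii) for $X_*$ supplies $k$, $b\in X_k$ and $w\in M_{(i+p)-k}$ with $uw=vw$ and $wb=a$, and the substitution $k'=k-i$ turns this into the data $b\in Y_{k'}$, $w\in M_{p-k'}$ needed for $Y_*$.

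More conceptually, I would instead invoke the equivalence $\f$ of Lemma \ref{catMs}, under which $X_*$ is filtered exactly when $\f(X)$ is a filtered functor. The shift corresponds to precomposition, $\f(X_*(i))=\f(X)\circ T_i$, with the translation automorphism $T_i\colon\cat(M_*)\to\cat(M_*)$, $n\mapsto n+i$ (which is the identity on hom-sets and invertible via $T_{-i}$). Since $T_i$ is an isomorphism of categories, the category of elements of $\f(X)\circ T_i$ is isomorphic to that of $\f(X)$, so one is filtered if and only if the other is.

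There is no genuine obstacle here: the content of the statement is simply that filteredness is invariant under the degree-shifting automorphisms of $\cat(M_*)$. The only point demanding care is the bookkeeping of indices, namely checking that the substitution $k'=k-i$ places each element in the correct graded piece, which the explicit verification above handles. (I read the ``$c\in M_k$'' in condition (ii) and the ``$X(i)$''/``$X(k)$'' in condition (iii) as typographical slips for $c\in X_k$ and $X_i$/$X_k$, since otherwise the degrees of $uc$ and $wb$ fail to match those of $a$.)
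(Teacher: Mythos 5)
Your proposal is correct. The paper states this lemma without any proof, treating it as evident, and your direct verification supplies exactly the routine index bookkeeping that was left implicit: the shift leaves the $M_*$-action untouched, so each filteredness condition for $X_*(i)$ reduces to the corresponding condition for $X_*$ under the substitution $k\mapsto k-i$. Your conceptual alternative---that $\f(X_*(i))=\f(X_*)\circ T_i$ for the degree-translation isomorphism $T_i$ of $\cat(M_*)$, so the categories of elements are isomorphic---is also valid, and your reading of ``$c\in M_k$'' and ``$X(i)$'', ``$X(k)$'' in the paper's definition as slips for $c\in X_k$ and $X_i$, $X_k$ is the right one.
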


The classical version of Diaconescu theorem implies the following result.

\begin{Pro} The category of points of the topos $\set_{{\rm gr}M}$ is equivalent to the category of graded filtered left $M_*$-sets. The inverse image functor $P_{X_*}:\set_{{\rm gr}{M_*}}\rightarrow \set$ of the point corresponding to a graded filtered left $M_*$-set $X_*$ is given by $(-)\odot _{M_*}X_*$.
\end{Pro}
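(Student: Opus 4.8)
The plan is to exhibit $\set_{{\rm gr}M_*}$ as a presheaf topos and then invoke the classical Diaconescu theorem \cite{mm}, which asserts that for a small category ${\bf C}$ the points of ${\sf Funct}({\bf C}^{op},\set)$ form a category equivalent to the category of flat (equivalently, filtered) functors ${\bf C}\to\set$, the inverse image functor attached to a flat functor $F$ being the tensor functor $(-)\otimes_{\bf C}F$.

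First I would use part ii) of Lemma \ref{catMs} to fix the equivalence $\f^{op}\colon\set_{{\rm gr}M_*}\xrightarrow{\simeq}{\sf Funct}(\cat(M_*)^{op},\set)$. Since $\cat(M_*)$ has the integers as its (small) set of objects and the sets $M_{j-i}$ as hom-sets, it is a genuine small category, so Diaconescu applies verbatim with ${\bf C}=\cat(M_*)$. This already identifies $\Pts(\set_{{\rm gr}M_*})$ with the category of filtered functors $\cat(M_*)\to\set$.

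Next I would transport this description across part i) of Lemma \ref{catMs}. The equivalence $\f$ matches a graded left $M_*$-set $X_*$ with the functor $\f(X_*)\colon\cat(M_*)\to\set$, and, as recorded immediately after the definition of filteredness, $X_*$ is a filtered graded left $M_*$-set precisely when $\f(X_*)$ is a filtered functor. Composing the two equivalences therefore yields the asserted equivalence between $\Pts(\set_{{\rm gr}M_*})$ and the category of filtered graded left $M_*$-sets.

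Finally, to pin down the inverse image functor I would unravel the tensor product supplied by Diaconescu's theorem. For a graded right $M_*$-set $Z_*$, regarded as the presheaf $\f^{op}(Z_*)$, the inverse image of the point attached to $X_*$ sends $Z_*$ to $\f^{op}(Z_*)\otimes_{\cat(M_*)}\f(X_*)$, and by part ii) of the lemma computing $\boxtimes_{M_*}$ this coend is canonically $Z_*\odot_{M_*}X_*$. Hence $P_{X_*}=(-)\odot_{M_*}X_*$, as claimed. The whole argument is a matter of assembling cited results; the only step demanding genuine attention is the variance bookkeeping, namely tracking left versus right graded $M_*$-sets against covariant versus contravariant functors on $\cat(M_*)$, so that Diaconescu's $(-)\otimes_{\cat(M_*)}\f(X_*)$ lands on exactly the $\odot_{M_*}$ appearing in the statement.
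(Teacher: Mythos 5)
Your proposal is correct and follows exactly the route the paper intends: the paper's entire proof is the remark that ``the classical version of Diaconescu theorem implies the result,'' relying on the preparatory material you assembled --- Lemma \ref{catMs} identifying graded $M_*$-sets with (co/contravariant) functors on $\cat(M_*)$, the observation that $X_*$ is filtered precisely when $\f(X_*)$ is a filtered functor, and the lemma identifying $\odot_{M_*}$ with the functor tensor product $\otimes_{\cat(M_*)}$. You have merely made explicit the bookkeeping the paper leaves implicit, so the two arguments coincide.
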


\subsection{Localisations of a commutative graded monoid}

We will henceforth assume that $M_*$ is commutative and will no longer make any distinction between left and right $M_*$-sets.
Let $M_*$ be a graded monoid and $S$ a submonoid of ${\sf Tot}(M_*)$. One sets $S_i=S\cap M_i$, $i\in \Z$. This gives us the graded set $S_*$, which is in fact a graded submonoid of $M_*$. We can form the localisation $S^{-1}({\sf Tot}(M_*))$.

The degree map ${\sf Tot}(M_*)\rightarrow \Z$ has an extension
$$S^{-1}({\sf Tot}(M_*)) \rightarrow \Z,$$
where $$\left \mid \frac{m}{s}\right \mid=|m|-|s|.$$
Thus, $S^{-1}({\sf Tot}(M_*))$ is an underlying monoid of a graded monoid $S_*^{-1}M_*$. Here, $(S_*^{-1}M_*)_i$ is the set of all fractions $\frac{m}{s}$, $m\in M_{|m|}$, $s\in S_{|s|}$ for which $|m|=|s|+i$.

This construction has an obvious continuation to graded $M_*$-sets. If $A_*$ is a graded $M_*$-set, then $S_*^{-1}A_*$ is a graded $S_*^{-1}M_*$-set. The $i$-th degree consists of all fractions $\frac{a}{s}$, $a\in A_{|m|}$, $s\in S_{|s|}$ for which $|a|=|s|+i$.

Of special interest are the cases when $S=\{f^n \ | \ n\in \N\}$ and when $S$ is the complement of a prime ideal $\p$ of ${\sf Tot}(M_*)$. Their corresponding localisations are written as $A_{*f}$ and $A_{*\p}$. We will denote the $0$-th degree of $A_{*f}$ and $A_{*\p}$ by $A_{*(f)}$ and $A_{*(\p)}$ respectively. For example, $A_{*(f)}=\{ \frac{m}{f^k}; |m|=k|f|\}$ and $A_{*(\p)}=\{\frac{m}{r};|m|=|r|\}$. It follows from the definition that $A_{*(f)}\simeq A_*\odot _{M_*} M_{*f}$ and similarly, $A_{*(\p)}\simeq A_*\odot _{M_*} M_{*\p}$.

\subsection{Positively graded monoids}

A graded monoid $M_*$ is called \emph{positively graded} if $M_i=\emptyset$ for $i<0$. Henceforth, all graded monoids are going to be assumed to be positively graded. Denote
$$M_+=\coprod_{d>0}M_d.$$
Then $M_+$ is a prime ideal of ${\sf Tot}(M_*)= \coprod_{d\geq 0}M_d$.

Let $\m$ be an ideal of ${\sf Tot}(M_*)$ and set $\m_d= \m\cap M_d$. Then $\m_0$ is an ideal of $M_0$ and $\m=\coprod_{d\ge 0} \m_d$. We have $\m=\m_0\coprod \m_+$, where $\m_+=\coprod_{d>0}\m_d$.

We omit the proof of the following theorem as it is identical to the proof of Theorem \ref{thm}.

\begin{Th}\label{pgrprime} Let $M_*$ be a commutative and positively graded monoid. Assume that $M_0$ is a finitely generated monoid, $M_1$ is a finitely generated $M_0$-set and $M_0$ and $M_1$ generate ${\sf Tot}(M_*)$ as a monoid. Any filtered $M_*$-set $X_*$ is of the form $M_{*(\p)}(i)$. Here, $i\in\Z$ is an integer and $\p$  is a prime ideal of ${\sf Tot}(M_*)$. The corresponding point of the topos  $\set_{{\rm gr}M_*}$ is given by $A_*\mapsto (A_{*}(i))_{(\p)}$.
\end{Th}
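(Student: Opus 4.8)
The plan is to follow the proof of Theorem \ref{thm} essentially verbatim, replacing each ungraded construction by its graded counterpart and carefully tracking the grading shifts. By the Diaconescu-type proposition preceding the statement, points of $\set_{{\rm gr}M_*}$ are the same as filtered graded $M_*$-sets, and the inverse image of the point attached to $X_*$ is $(-)\odot_{M_*}X_*$; so it suffices to prove that every filtered $X_*$ is isomorphic to a shift $M_{*\p}(i)$ of the graded localisation at a prime $\p\subseteq{\sf Tot}(M_*)$. The formula for the inverse image functor is then automatic, since $A_*\odot_{M_*}M_{*\p}(i)\simeq(A_*(i))\odot_{M_*}M_{*\p}\simeq(A_*(i))_{(\p)}$ by the shift-compatibility of $\boxtimes_{M_*}$ and the identity $B_{*(\p)}\simeq B_*\odot_{M_*}M_{*\p}$ recorded in the localisation subsection. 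The first step is a graded analogue of Lemma \ref{212}: a homogeneous $m\in M_d$ gives a graded morphism $l_m:X_*\to X_*(d)$, and I would set
$$\p=\p_{X_*}:=\{\, m\in{\sf Tot}(M_*)\ \mid\ l_m:X_*\to X_*(|m|)\ \text{is not an isomorphism}\,\}.$$
Reading $l_{mn}=l_m\circ l_n$ as a composite of shifts, the computation of Lemma \ref{212} carries over to show that $\p$ is a prime ideal and that the $M_*$-action on $X_*$ extends to the graded localisation $M_{*\p}$; the argument of Lemma \ref{231} then shows that $X_*$ is \emph{conservative} over $M_{*\p}$, i.e. $l_s$ is an isomorphism only when $s$ is a unit of $M_{*\p}$. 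One also checks, using the localisation arguments of Lemma \ref{221}, that $X_*$ remains filtered over $M_{*\p}$.

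Next I would record the two graded sublemmas that feed into the source argument. Condition (iii) of the definition of a filtered graded set is precisely the graded form of (F2), so the proof of Lemma \ref{surj} transports word for word: a degreewise surjective $l_m$ is an isomorphism. Likewise, conditions (ii) and (iii) give the graded form of Lemma \ref{141a}: if $c\in X_k$ is a \emph{source} (meaning $c=mb$ forces $l_m$ to be an isomorphism), then $\lambda_c:M_*\to X_*(k)$, $m\mapsto mc$, is an isomorphism of graded $M_*$-sets. This is where the grading shift $(k)$, and hence the integer $i$ of the statement, enters. Finally, the finiteness hypotheses supply a \emph{finite} set of homogeneous generators $e_1,\dots,e_n$ of the monoid ${\sf Tot}(M_*)$: a finite monoid generating set of $M_0$ together with a finite set of $M_0$-generators of $M_1$ suffices, because $M_0$ and $M_1$ generate ${\sf Tot}(M_*)$. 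This finite generating set is the graded replacement for the almost generators used in Lemma \ref{zn}.

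With these in place, the source is produced as in Lemma \ref{zn}. For each generator $e_t$ that is \emph{not} a unit of $M_{*\p}$ (equivalently $e_t\in\p$), conservativity makes $l_{e_t}$ non-invertible, hence non-surjective by the graded Lemma \ref{surj}, so there is a homogeneous $a_t$ with $e_tx=a_t$ unsolvable. Using condition (ii) finitely many times I would find a single homogeneous $c\in X_k$ and elements $m_t$ with $a_t=m_tc$. To see that $c$ is a source, write $c=\xi b$ with $\xi=\tfrac{m}{s}$ homogeneous in $M_{*\p}$, and suppose $\xi$ is not a unit, so that $m\in\p$; since $\p$ is prime and $m$ is a product of the $e_t$, some non-unit generator $e_{t}$ divides $m$, whence $c=e_{t}b'$ and $a_{t}=m_{t}c=e_{t}(m_{t}b')$ lies in the image of $l_{e_{t}}$, a contradiction. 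Hence $\xi$ is a unit, $c$ is a source, and the graded Lemma \ref{141a} gives $\lambda_c:M_{*\p}\xrightarrow{\sim}X_*(k)$, that is $X_*\simeq M_{*\p}(-k)$, which is the asserted form.

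I expect the real work to be bookkeeping rather than conceptual. The first nuisance is the grading: one must consistently treat $l_m$ as a degree-raising map, fix the convention that ``isomorphism'' means degreewise bijectivity of $X_*\to X_*(|m|)$, and follow the degree $k$ of the source through to the final shift. The more delicate point is the interaction of conservativity with the graded localisation $M_{*\p}$: here the units of $M_{*\p}$ are exactly the homogeneous fractions $\tfrac{m}{s}$ with $m\notin\p$, and one must verify that these are precisely the elements on which $l$ acts invertibly, so that the finite generating set splits cleanly into units (which may appear freely when decomposing $m$) and non-units (each of which yields an obstruction $a_t$). I would stress that it is this last step --- locating a non-unit generator inside $m$ using the primality of $\p$ --- that lets the argument go through \emph{without} having to know that $M_{*\p}$ itself satisfies the finiteness hypotheses, which is the one place where a literal transcription of the proof of Theorem \ref{thm} would otherwise require an extra lemma.
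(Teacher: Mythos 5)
Your proposal is correct and takes essentially the same route as the paper: the paper omits the proof of Theorem \ref{pgrprime} entirely, stating only that it is identical to the proof of Theorem \ref{thm}, and your write-up is exactly that graded transcription --- the prime $\p_{X_*}$ from the graded Lemma \ref{212}, conservativity and filteredness over $M_{*\p}$, the graded Lemmas \ref{surj} and \ref{141a}, the source construction of Lemma \ref{zn}, with the shift $(k)$ tracked correctly and the stated inverse-image formula recovered via $A_*\odot_{M_*}M_{*\p}(i)\simeq(A_*(i))_{(\p)}$. Even your one deliberate deviation --- using primality of $\p$ to extract a non-unit generator from the numerator of a fraction, instead of first proving that $M_{*\p}$ inherits the finiteness hypotheses --- is just an inlined form of the paper's own observation (made before Lemma \ref{zn}) that localisations of almost finitely generated monoids are almost finitely generated, so it is a streamlining within the same argument rather than a different approach.
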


\section{The topos of quasi-coherent sheaves over the monoid scheme $\Proj(M_*)$}\label{poproj}

Let $X$ be a monoid scheme. We have seen (Theorem \ref{432.05.03}) that the category $\Qc(X)$ of quasi-coherent sheaves over $X$ is an $s$-noetherian topos in favourable cases. The stalk at a point $x\in X$ defines a point of the topos $\Qc(X)$. We have seen that the converse is also true if $X=\Spec(M)$ is affine and $M$ is almost finitely generated. That is, any point of the topos $\Qc(X)$ is isomorphic to the stalk functor at a point of $X$.

We wish to generalise this result to the non-affine case. We will show that this is true for a large class of monoid schemes, namely projective monoid schemes.

The graded monoids in this section are all assumed to be commutative and positively graded.

\subsection{ Monoid scheme $\Proj(M_*)$}

Let $M_*$ be a positively graded monoid. We can construct the monoid scheme $\Proj(M_*)$ just like in classical algebraic geometry \cite{chww}. We recall the definition:

Denote by $V(M_+)$ the set of all prime ideals $\p$ of ${\sf Tot}(M_*)$ for which $M_+\subseteq \p$. The map
$$\Spec(M_0)\rightarrow V(M_+), \quad \p_0\mapsto \p_0\coprod M_+$$
is a homeomorphism since $M_+=\coprod_{d>0}M_d$ is a prime ideal of ${\sf Tot}(M_*)= \coprod_{d\geq 0}M_d$. We define $\Proj(M_*)$ as a topological space by
$$\Proj(M_*):=\{\p\in \Spec({\sf Tot}(M_*)) \ | \ \, M_+\setminus \p\not=\emptyset\}=\Spec({\sf Tot}(M_*))\setminus V(M_+).$$
Hence, $\Proj(M_*)$ is an open subset of $\Spec({\sf Tot}(M_*))$. Take an element $f\in {\sf Tot}(M_*)$. Then $D(f)\subseteq \Proj(M_*)$ if and only if $f\in M_+$. The collection of open sets $\{D(f)\}_{f\in M_+}$ is an open cover of $\Proj(M_*)$. 

We can regard $\Proj(M_*)$ as a monoid subscheme of $\Spec({\sf Tot}(M_*))$. But it also has another monoid scheme structure. We will need to introduce additional notation in order to describe it.

Let $f\in M_d$, $d\geq 1$ and denote by ${\sf Tot}(M_*)_{(f)}$ the subset of degree zero elements of ${\sf Tot}({M_*})_f$. That is
$${\sf Tot}(M_*)_{(f)}:=\left\{\frac{a}{f^k}\in {\sf Tot}({M_*})_f \ | \ \, a\in M_{kd}\right \}.$$
According to \cite{chww}, there exists a unique monoid scheme structure on $\Proj(M_*)$, for which the structure sheaf $\O_X$, $X=\Proj(M_*)$ is given by
$$\Gamma(D(f), \O_X)={\sf Tot}(M_*)_{(f)}, \ \ f\in M_+.$$
The stalk at $\p\in \Proj(M_*)$ is ${\sf Tot}(M_*)_{(\p)}$ and the restriction of the structural sheaf on $D(f)$, $f\in M_+$ is isomorphic to the affine monoid scheme $\Spec(M_{*(f)})$.

\subsection{The functor $\iota^*:\set_{{\rm gr}M_*}\rightarrow \Qc(\Proj(M_*))$}

Our next aim is to relate the category $\Qc(\Proj(M_*))$ of quasi-coherent sheaves over $X=\Proj(M_*)$ to the category $\set_{{\rm gr}M_*}$ of graded $M_*$-sets.

Assume $A_*$ is a graded $M_*$-set. There exist a unique sheaf on $\Proj(M_*)$, denoted by $\iota^*(A_*)$, such that
$$\Gamma(D(f),\iota^*(A_*))={{\sf Tot}(A_*)}_{(f)}$$
for each $f\in M_d$, $d\geq 1$.
It is clear from the description that $\iota^*(A_*)$ is a quasi-coherent sheaf. This yields the functor
$$\iota^*:\set_{{\rm gr}M_*}\rightarrow \Qc(\Proj(M_*)).$$
The functor $A_*\mapsto {{\sf Tot}(A_*)}_{(f)}$ respects colimits and finite limits. Hence, $\iota^*$ also preserves all colimits and finite limits.

\begin{Le}\label{722} Let $\alpha_*:A_*\rightarrow B_*$ be a morphism of graded $M_*$-sets. The following conditions are equivalent:
\begin{itemize}
\item [(i)] The morphism $\iota^*(\alpha_*):\iota(A_*)\rightarrow \iota^*(B_*)$ is a monomorphism.
\item [ii)] Let $f\in M_d$, $d\geq 1$ and $x,y\in A_k$ such that $\alpha_k(x)=\alpha_k(y)$. There exists $m\in \N$ such that $xf^m=yf^m$.
\end{itemize}
\end{Le}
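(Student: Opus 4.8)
The plan is to reduce the statement to an affine, chart-by-chart computation and then unwind the localisation. Since the sets $D(f)$ with $f\in M_+$ form an open cover of $\Proj(M_*)$ and a morphism of sheaves is a monomorphism precisely when each of its restrictions to an open cover is one (monomorphisms are detected on stalks, and every stalk sits inside some $D(f)$), the morphism $\iota^*(\alpha_*)$ is a monomorphism if and only if $\iota^*(\alpha_*)|_{D(f)}$ is a monomorphism for every $f\in M_+$. On the chart $D(f)\cong\Spec(M_{*(f)})$ one has $\Qc(D(f))\simeq\set_{M_{*(f)}}$, and under this equivalence $\iota^*(A_*)|_{D(f)}$ is the quasi-coherent sheaf attached to the $M_{*(f)}$-set ${\sf Tot}(A_*)_{(f)}$. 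As monomorphisms in $\set_{M_{*(f)}}$ are exactly the injective maps of $M_{*(f)}$-sets, condition (i) is equivalent to the assertion that for every $f\in M_+$ the map ${\sf Tot}(A_*)_{(f)}\to{\sf Tot}(B_*)_{(f)}$ is injective.

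It then remains to match this family of injectivity conditions with (ii). First I would record the explicit description of equality in the localisation: for $a\in A_{kd}$ and $a'\in A_{k'd}$ one has $\frac{a}{f^{k}}=\frac{a'}{f^{k'}}$ in ${\sf Tot}(A_*)_{(f)}$ if and only if $af^{k'+n}=a'f^{k+n}$ for some $n$, and similarly in $B_*$. The decisive structural fact is that $\alpha_*$ is a graded morphism, so it commutes with multiplication by $f$, i.e. $\alpha(af^{j})=\alpha(a)f^{j}$.

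For the implication (ii)$\Rightarrow$(i) I would take two elements of ${\sf Tot}(A_*)_{(f)}$ with equal images, bring them to a common denominator $f^{N}$, and represent them by $x=af^{k'+n}$ and $y=a'f^{k+n}$, which then lie in the same graded piece $A_{Nd}$ and satisfy $\alpha(x)=\alpha(y)$ by the previous paragraph. Applying (ii) to $x,y$ yields an $m$ with $xf^{m}=yf^{m}$, which is precisely the relation witnessing equality of the two fractions; hence $\iota^*(\alpha_*)|_{D(f)}$ is injective for every $f$, giving (i). This direction is routine once the localisation relation is written out.

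The hard direction is (i)$\Rightarrow$(ii), and the main obstacle is a \emph{degree mismatch}: an equality $\alpha_k(x)=\alpha_k(y)$ with $x,y\in A_k$ is only \emph{seen} by the degree-zero sections ${\sf Tot}(A_*)_{(f)}$ when $d\mid k$, since $\frac{x}{f^{j}}$ is homogeneous of degree zero only if $|x|$ is a multiple of $d=|f|$. When $d\mid k$ the argument is immediate: $\frac{x}{f^{k/d}}$ and $\frac{y}{f^{k/d}}$ lie in ${\sf Tot}(A_*)_{(f)}$, have equal image, and injectivity on $D(f)$ delivers $xf^{m}=yf^{m}$. For the general case I would bridge the gap by writing the homogeneous element $f$ as a product of generators of lower degree (this is where the generation hypothesis on $M_+$ underlying the projective setting enters) and reducing condition (ii) for $f$ to condition (ii) for those factors: if $g$ has degree $1$ and $xg^{m}=yg^{m}$, then multiplying through by the complementary factors of $f$ promotes this to a relation $xf^{m'}=yf^{m'}$. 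Thus the essential case is $f\in M_1$, where $\frac{x}{f^{k}}$ is automatically of degree zero, and the genuinely delicate point of the whole argument is exactly this passage from the degree-zero sections back to the ungraded relation demanded in (ii).
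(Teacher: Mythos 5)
Your overall route coincides with the paper's: cover $\Proj(M_*)$ by the charts $D(f)$, use that a morphism of sheaves is a monomorphism if and only if its restriction to each member of an open cover is one, identify $\Qc(D(f))$ with the category of $M_{*(f)}$-sets, and translate the monomorphism condition into injectivity of the localised maps $A_{*(f)}\rightarrow B_{*(f)}$. Your (ii)$\Rightarrow$(i) computation (common denominator, then the localisation relation $af^{k'+n}=a'f^{k+n}$) is exactly the unwinding that the paper leaves implicit, and it is correct.

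The real content of your write-up is the direction (i)$\Rightarrow$(ii), where the paper's proof consists of the single clause ``which happens if and only if ii) holds.'' You correctly isolate the degree-mismatch problem: a coincidence $\alpha_k(x)=\alpha_k(y)$ with $d\nmid k$ is invisible in the degree-zero localisation $A_{*(f)}$, so injectivity of $A_{*(f)}\rightarrow B_{*(f)}$ alone says nothing about such $x,y$. Your repair --- factor $f$ through degree-one elements, obtain the relation over $g\in M_1$ from injectivity on $D(g)$ (where $\frac{x}{g^k}$ does have degree zero), and multiply by the complementary factors to convert $xg^m=yg^m$ into $xf^m=yf^m$ --- is sound, but be aware that it invokes the hypothesis that ${\sf Tot}(M_*)$ is generated by $M_0$ and $M_1$, which the paper only imposes from Section \ref{74fin} onwards, i.e.\ \emph{after} this lemma; the lemma itself is stated for an arbitrary positively graded commutative monoid. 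This is not something one can argue around: without such a hypothesis the implication (i)$\Rightarrow$(ii) is false. Take $M_*$ free on one generator $t$ of degree $2$, let $A_*$ be the free graded $M_*$-set on two generators $x,y$ of degree $1$, let $B_*$ be free on one generator $z$ of degree $1$, and let $\alpha$ send $xt^i$ and $yt^i$ to $zt^i$. Every homogeneous element of positive degree in $M_*$ has even degree while every element of $A_*$ and $B_*$ has odd degree, so $A_{*(t^j)}=\emptyset=B_{*(t^j)}$ for all $j\geq 1$ and (i) holds trivially; yet $\alpha_1(x)=\alpha_1(y)$ while $xt^{jm}\not=yt^{jm}$ for all $m$, so (ii) fails. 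So your proof is the correct one for the lemma as it is actually used (in Lemma \ref{762} and beyond, where the finiteness conditions are in force), and the ``delicate point'' you flag is a genuine gap in the paper's own one-line argument, which nowhere addresses the case $d\nmid k$.
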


\begin{proof} The family $\{D(f)\}_{f\in M_d, \ d\geq 1}$, forms an open cover. As such, $\iota^*(\alpha_*)$ is a monomorphism if and only if the restriction of $\iota^*(\alpha_*)$ on $D(f)$ is a monomorphism for each such $f$. The latter condition is equivalent to injectivity of the induced map $A_{*(f)}\rightarrow B_{*(f)}$, which happens if and only if ii) holds.
\end{proof}

\begin{Le} \label{723} Let $\alpha_*:A_*\rightarrow B_*$ be a morphism of graded $M_*$-sets. The following conditions are equivalent:
\begin{itemize}
\item [(i)] The morphism $\iota^*(\alpha_*):\iota^*(A_*)\rightarrow \iota^*(B_*)$ is an epimorphism.
\item [ii)] Let $f\in M_d$, $d\geq 1$ and $z\in B_k$. There exists $m\in \N$, such that $zf^m\in \im(\alpha_{mdk})$.
\end{itemize}
\end{Le}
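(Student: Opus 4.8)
The plan is to mirror, step for step, the argument used for Lemma \ref{722}, replacing ``monomorphism/injective'' by ``epimorphism/surjective'' throughout, and to reduce the assertion to an elementary computation of fractions inside a localised graded monoid. First I would invoke the open cover $\{D(f)\}_{f\in M_d,\,d\geq 1}$ of $\Proj(M_*)$. Since epimorphisms of quasi-coherent sheaves are detected locally, the morphism $\iota^*(\alpha_*)$ is an epimorphism if and only if its restriction to each $D(f)$ is one. Each $D(f)$ is the affine monoid scheme $\Spec(M_{*(f)})$, and under the equivalence $\Qc(\Spec(M_{*(f)}))\simeq \set_{M_{*(f)}}$ the restriction of $\iota^*(\alpha_*)$ is exactly the induced map of $M_{*(f)}$-sets $A_{*(f)}\to B_{*(f)}$. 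As epimorphisms in a topos of the form $\set_{M_{*(f)}}$ are precisely the surjective maps, condition (i) is equivalent to the surjectivity of $A_{*(f)}\to B_{*(f)}$ for every $f\in M_+$.

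The heart of the proof is then to unwind this surjectivity in terms of homogeneous elements. Recall that $B_{*(f)}$ consists of the fractions $b/f^l$ with $b\in B_{ld}$, and that the map sends $a/f^l$ to $\alpha(a)/f^l$. A fraction $b/f^l$ lies in the image precisely when $\alpha(a)/f^{l'}=b/f^l$ for some $a$; unwinding this equality of fractions in the localisation produces integers $m,m'\in\N$ with $bf^m=\alpha(a)f^{m'}$. Since $\alpha_*$ is a morphism of graded $M_*$-sets we have $\alpha(a)f^{m'}=\alpha(af^{m'})$, so the equality says exactly that $bf^m\in\im(\alpha)$ in the appropriate degree. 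Reading this equivalence in both directions converts the surjectivity of $A_{*(f)}\to B_{*(f)}$ into the elementwise statement (ii), and in particular the implication (ii)$\Rightarrow$(i) is immediate: given $b/f^l\in B_{*(f)}$, condition (ii) yields an $m$ with $bf^m=\alpha(a)$, whence $b/f^l=\alpha(a)/f^{l+m}$ is hit.

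I expect the main obstacle to lie in the careful bookkeeping of degrees in this denominator-clearing step, and specifically in the implication (i)$\Rightarrow$(ii). The surjectivity of the degree-zero maps $A_{*(f)}\to B_{*(f)}$ directly controls only those numerators whose degree is a multiple of $d=|f|$, whereas (ii) asserts the conclusion for an \emph{arbitrary} homogeneous $z\in B_k$. The delicate point is therefore to pass from local surjectivity on the charts back to the condition for every $z$, which one does by using that $f$ becomes invertible on $D(f)$, so that multiplication by $f$ identifies degrees differing by $d$, together with the fact that the charts $D(f)$, $f\in M_+$, jointly cover $\Proj(M_*)$. Once the fraction computation of the previous paragraph is set up correctly, both implications follow in the same formal manner as in Lemma \ref{722}.
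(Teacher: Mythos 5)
Your overall route is the same as the paper's: cover $\Proj(M_*)$ by the affine charts $D(f)$, use that $\iota^*(\alpha_*)$ is an epimorphism if and only if each restriction is, identify that restriction, via $\Qc(D(f))\simeq\set_{M_{*(f)}}$, with the induced map $A_{*(f)}\rightarrow B_{*(f)}$, and recall that epimorphisms of $M_{*(f)}$-sets are exactly the surjections. Your denominator-clearing computation and the implication (ii)$\Rightarrow$(i) are correct; this much is precisely what the paper does (and largely leaves implicit).

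The problem is the implication (i)$\Rightarrow$(ii), at exactly the point you flag as delicate. Surjectivity of $A_{*(f)}\rightarrow B_{*(f)}$ only concerns fractions $b/f^l$ whose numerator has degree $ld$, and the mechanism you offer to reach an arbitrary $z\in B_k$ --- that $f$ is invertible on $D(f)$, so multiplication by $f$ ``identifies degrees differing by $d$'', together with the covering by the $D(f)$ --- cannot close this gap: multiplication by $f$ preserves the degree modulo $d$, so if $d\nmid k$ the elements $zf^m$ never have degree divisible by $d$ and never occur as numerators of elements of $B_{*(f)}$ at all. Indeed, under the standing hypotheses of this subsection (commutative and positively graded; the finiteness conditions of Section \ref{74fin} are imposed only afterwards) the implication is false. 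Take ${\sf Tot}(M_*)$ free on one generator $t$ placed in degree $2$, let $A_*=M_*$, let $B_*=M_*\coprod M_*(1)$, and let $\alpha_*$ be the coproduct inclusion. Every element of the summand $M_*(1)$ has odd degree, while every homogeneous $f\in M_+$ is some $t^j$ of even degree; hence $M_*(1)$ contributes nothing to any $B_{*(f)}$, each map $A_{*(f)}\rightarrow B_{*(f)}$ is a bijection of one-point sets, and (i) holds. Yet for $z\in M_*(1)_{-1}=B_{-1}$ and $f=t$, the element $zf^m$ lies in the summand $M_*(1)$ for every $m$, hence is never in $\im(\alpha_*)$, so (ii) fails. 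The statement is rescued by the hypotheses of Section \ref{74fin}, under which the argument can be completed: the charts $D(f)$ with $f\in M_1$ already cover $\Proj(M_*)$; for such $f$ the degree obstruction disappears, since any $z\in B_k$ yields the degree-zero fraction $z/f^k$ and surjectivity then gives $zf^m\in\im(\alpha_*)$ for some $m$; and (ii) for a general homogeneous $f=uf_1\cdots f_d$ (with $u\in M_0$, $f_i\in M_1$) follows from (ii) for $f_1$ by multiplying the resulting equation $zf_1^{n}=\alpha(a)$ with $(uf_2\cdots f_d)^{n}$. To be fair, the paper's own proof is silent on this point as well --- it simply asserts that surjectivity of the maps $A_{*(f)}\rightarrow B_{*(f)}$ is equivalent to (ii) --- but your proposal commits to a specific mechanism for bridging the degrees, and that mechanism does not work.
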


\begin{proof} The family $\{D(f)\}_{f\in M_d, \ d\geq 1}$, forms an open cover of $\Proj(M_*)$. Hence, $\iota^*(\alpha_*)$ is an epimorphism if and only if the restriction of $\iota^*(\alpha_*)$ on $D(f)$ is an epimorphism for each $f\in M_d, d\geq 1$. The sheaves involved here are quasi-coherent and $D(f)$ is affine. In this case, this condition is equivalent to the surjectivity of the canonical map $A_{*(f)}\rightarrow B_{*(f)}$. The latter holds if and only if part ii) of this Lemma holds.
\end{proof}

\subsection{Some finiteness conditions}\label{74fin}

From this section onwards, up to Section \ref{connes}, we will assume that the positively graded monoid $M_*$ satisfies the following properties:
\begin{itemize}
\item[(i)] $M_0$ is a almost finitely generated monoid.
\item[(ii)] $M_1$ is a finitely generated $M_0$-set.
\item[(iii)] ${\sf Tot}(M_*)$ is generated by $M_0$ and $M_1$ as a monoid.
\end{itemize}

\begin{Le} Let $f_1,\cdots, f_s\in M_1$ generate $M_1$ as an $M_0$-set. The open subsets $D(f_1),\cdots, D(f_s)$ cover $\Proj(M_*)$.
\end{Le}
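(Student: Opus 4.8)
The claim is that if $f_1,\dots,f_s\in M_1$ generate $M_1$ as an $M_0$-set, then the open sets $D(f_1),\dots,D(f_s)$ cover $\Proj(M_*)$. Recall that by construction $\Proj(M_*)=\{\p\in\Spec({\sf Tot}(M_*))\mid M_+\setminus\p\neq\emptyset\}$, and that for $f\in M_+$ we have $D(f)=\{\p\mid f\nin\p\}$. So the statement to prove is purely set-theoretic: for every prime $\p$ in $\Proj(M_*)$ there is some index $i$ with $f_i\nin\p$. The plan is to show that if $\p$ contains all the $f_i$, then $\p$ must contain all of $M_+$, contradicting $\p\in\Proj(M_*)$.

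\textbf{Reduction to containment of $M_+$.}
First I would fix a prime $\p\in\Proj(M_*)$ and argue by contraposition: suppose $f_i\in\p$ for every $i\in\{1,\dots,s\}$; I want to derive $M_+\subseteq\p$. Take any homogeneous element $m\in M_d$ with $d\geq 1$. By hypothesis (iii) of Section \ref{74fin}, ${\sf Tot}(M_*)$ is generated as a monoid by $M_0$ and $M_1$, so $m$ is a product of elements of $M_0$ and $M_1$; since $m$ has positive degree $d$, at least $d\geq 1$ of these factors lie in $M_1$. Thus I can write $m=n\cdot y_1\cdots y_d$ with $n\in M_0$ and each $y_j\in M_1$. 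Now invoke the hypothesis that $f_1,\dots,f_s$ generate $M_1$ as an $M_0$-set: each $y_j$ equals $c_j f_{i_j}$ for some $c_j\in M_0$ and some index $i_j$. Hence $m$ is a multiple of $f_{i_1}$, say $m=m'f_{i_1}$ with $m'\in{\sf Tot}(M_*)$.

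\textbf{Conclusion via the prime/ideal property.}
Since $f_{i_1}\in\p$ and $\p$ is an ideal, $m=m'f_{i_1}\in\p$. As $m$ was an arbitrary homogeneous element of positive degree and $M_+=\coprod_{d>0}M_d$, this shows $M_+\subseteq\p$, i.e. $M_+\setminus\p=\emptyset$, which contradicts $\p\in\Proj(M_*)$. Therefore at least one $f_i\nin\p$, so $\p\in D(f_i)$, and since $\p$ was arbitrary the sets $D(f_1),\dots,D(f_s)$ cover $\Proj(M_*)$.

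\textbf{Main point to be careful about.}
The argument is essentially elementary, and the only step demanding genuine attention is the factorisation of an arbitrary $m\in M_+$ into factors from $M_0$ and $M_1$ and the subsequent extraction of an $f_i$-divisor; here one must use both generation hypotheses (iii) and the $M_0$-set generation of $M_1$ together, and check that a positive-degree element really does pick up at least one $M_1$-factor (this is where positive grading is used, guaranteeing $d\geq 1$ forces a degree-one generator to appear). No finiteness of $M_0$ or of the $M_0$-set $M_1$ is actually needed for covering; finiteness of the generating set merely makes the cover finite, which is what we want. One need not even use the prime property beyond the ideal property, since divisibility by a single $f_i$ already places $m$ in $\p$.
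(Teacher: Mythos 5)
Your proof is correct and takes essentially the same route as the paper: the paper assumes some $\p\in\Proj(M_*)$ contains all the $f_i$, picks a witness $f\in M_+\setminus\p$, factors it as $f_0f_1^{n_1}\cdots f_s^{n_s}$ using hypotheses (ii) and (iii), and extracts an $f_i$-divisor to get a contradiction. Your version merely reorganises this by factoring an arbitrary positive-degree element to conclude $M_+\subseteq\p$ first; the core step (degree additivity forces a degree-one factor, which is an $M_0$-multiple of some $f_i$, so the ideal property applies) is identical.
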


\begin{proof} Assume they do not. There exists a prime ideal $\p\in \Proj(M_*)$ such that $f_1, \cdots, f_s\in \p$. We can find an element $f=f_0f_1^{n_1}\cdots f_s^{n_s}\in M_+$ such that $f\not\in \p$, since $\p\in\Proj(M_*)$. Here, $f_0\in M_0$, $n_i\geq 0$ and $I=1,\cdots,s$. We see that $n_i>0$ for at least one $i>0$ since $f\not \in M_+$. It follows that $f\in (f_i)\subseteq \p$, which contradicts our assumptions on $f$. As usual, $(f_i)$ denotes the principal ideal generated by $f_i$.
\end{proof}

We will need the shifting functors on $\Qc(\Proj(M_*))$, which are defined as follows: Denote by $\O_X(n)$, $n\in \Z$, the sheaf $\iota^*(M_*(n))$, where $X=\Proj(M_*)$. Let $\mathcal{F}$ be  a quasi-coherent sheaf. We set
$$\mathcal{F}(n):=\mathcal{F}\otimes _{\mathcal O_X}\O_X(n).$$

\begin{Le} Let $X=\Proj(M_*)$. The sheaf $\O_X(n)$ is a line bundle for any $n\in \Z$.
\end{Le}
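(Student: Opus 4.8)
The plan is to prove directly that $\O_X(n)$ is locally isomorphic to $\O_X$; since $\O_X(n)=\iota^*(M_*(n))$ was already observed to be quasi-coherent, this is exactly what is needed by the definition of a line bundle in Section \ref{44123sec}. The essential choice is the cover to work over. Rather than use arbitrary $D(f)$, I would use the cover of $X=\Proj(M_*)$ by the affine opens $D(f_1),\dots,D(f_s)$ where $f_1,\dots,f_s\in M_1$ generate $M_1$ as an $M_0$-set. The Lemma immediately preceding this statement guarantees that these $D(f_i)$ cover $\Proj(M_*)$, and this is precisely where the finiteness condition (iii) of Section \ref{74fin} (generation of ${\sf Tot}(M_*)$ by $M_0$ and $M_1$) is used. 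The point I want to exploit is that every trivialising element $f_i$ lives in degree one.

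Next I would compute both restrictions over a fixed $D(f)$ with $f\in M_1$. Writing $R={\sf Tot}(M_*)_f$ for the localisation, graded by $|\tfrac{a}{f^k}|=|a|-k$, the defining formula $\Gamma(D(f),\iota^*(A_*))={\sf Tot}(A_*)_{(f)}$ yields
$$\Gamma(D(f),\O_X)=R_0=\{\tfrac{a}{f^k}\mid a\in M_k\},\qquad \Gamma(D(f),\O_X(n))=R_n=\{\tfrac{a}{f^k}\mid a\in M_{k+n}\},$$
the degree-zero and degree-$n$ parts of $R$ respectively. The second identity is obtained by unwinding $A_*=M_*(n)$, where $(M_*(n))_j=M_{n+j}$: the degree-zero condition defining ${\sf Tot}(M_*(n))_{(f)}$ forces the $M_*(n)$-degree of the numerator $a$ to equal $|f^k|=k$, i.e. $a\in M_{n+k}$. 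Here I would also recall that $\Gamma(D(f),\O_X)=M_{*(f)}=R_0$ and that $D(f)=\Spec(M_{*(f)})$ is affine.

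The trivialisation is then immediate. Since $f\in M_1$ becomes invertible in $R$, the homogeneous element $f^n\in R_n$ is a unit of degree $n$ (with $f^n=\tfrac{1}{f^{-n}}$ when $n<0$), so multiplication by $f^n$ is an $R_0$-linear bijection $R_0\xrightarrow{\ \sim\ }R_n$ whose inverse is multiplication by $f^{-n}$. Because $D(f)$ is affine and quasi-coherent sheaves on it correspond to $M_{*(f)}$-sets, this isomorphism of $M_{*(f)}=R_0$-sets corresponds to an isomorphism $\O_X(n)|_{D(f)}\simeq\O_X|_{D(f)}$. As the $D(f_i)$ cover $X$, the sheaf $\O_X(n)$ is locally isomorphic to $\O_X$, hence a line bundle.

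I expect the only genuine obstacle to be the grading bookkeeping, specifically confirming that the trivialising section $f^n$ sits in the correct degree $n$ and is truly a unit of $R$. This is exactly why the cover must be taken by the $D(f)$ with $\deg f=1$: for a generator $f$ of degree $d>1$ the analogous argument only identifies $R_0$ with $R_n$ when $d\mid n$, and the statement would fail for a $\Proj$ of a graded monoid not generated in degrees $0$ and $1$.
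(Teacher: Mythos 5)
Your proof is correct and follows essentially the same route as the paper: restrict to the cover by $D(f)$ with $f\in M_1$ (which exists by the preceding lemma, using the finiteness conditions), identify $\Gamma(D(f),\O_X(n))$ with the degree-$n$ part of ${\sf Tot}(M_*)_f$, and observe that since $f$ is invertible there, multiplication by $f^n$ trivialises the restriction, i.e.\ ${\sf Tot}(M_*)(n)_{(f)}$ is free of rank one over ${\sf Tot}(M_*)_{(f)}$ with generator $f^n$. Your closing remark about why degree-one trivialising elements are essential is a correct and worthwhile clarification of what the paper leaves implicit.
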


\begin{proof} We regard ${\sf Tot}(M_*)(n)_{(f)}$ as a ${\sf Tot}(M_*)_{(f)}$-set. Let $f\in M_1$. The restriction of $\O_X(n)$ on $D(f)$ is isomorphic to the quasi-coherent sheaf on $\Spec({\sf Tot}(M_*))_{(f)}$, associated to ${\sf Tot}(M_*)(n)_{(f)}$. Since $f$ is invertible in ${\sf Tot}(M_*)_{(f)}$, ${\sf Tot}(M_*)(n)_{(f)}$ is free of rank one over ${\sf Tot}(M_*)_{(f)}$ with generator $f^n$.
\end{proof}

\subsection{The geometric morphism $\iota:\Qc(\Proj(M_*)) \rightarrow \set_{{\rm gr}M_*}$}

Recall that $M_*$ satisfies the finiteness conditions listed at the beginning of Section \ref{74fin}. We have already mentioned that the functor
$$\iota^*:\set_{{\rm gr}M_*}\rightarrow \Qc(\Proj(M_*))$$
respects finite limits and all colimits. It has an adjoint $i_*$, which is given as follows: Take a quasi-coherent sheaf $\mathcal{F}$ on $X=\Proj(M_*)$ and define the graded set $\Gamma_*(\mathcal{F})$ by
$$\Gamma_n(\mathcal{F})=\Gamma(X,\mathcal{F}(n)), \ n\in \Z.$$
We will define an action of $M_*$ on $\Gamma_*(\mathcal{F})$. Take $m\in M_k$. Since $(M_*(k))_0=M_k$, the element $\frac{m}{1}$ can be considered as an element of $(M_*(k))_{*(f)}$ for any $f\in M_1$. This is the same as $\Gamma(D(f), \iota((M_*)(k))$. Such elements satisfy the gluing condition and hence, define a global element in 
$\Gamma(X, \O_X(k))$. It will still be denoted by $m$. Take an element $x\in \Gamma_i(\mathcal{F})$. By Lemma \ref{yonshe}, we have
$$\Hom_{\O_X}(\O_X, \mathcal{G})\simeq \Gamma(X, \mathcal{G})$$
for any sheaf of $\O_X$-sets $\mathcal{G}$. It follows that  $m$ and $x$ define morphisms of coherent sheaves $\O_X\rightarrow \O_X(k)$ and $\O_X\rightarrow \mathcal{F}(i)$ respectively. We take the tensor product to obtain
$$\O_X=\O_X\otimes_{\O_X}\O_X\rightarrow \O_X(k)\otimes_{\O_X}\mathcal{F}(i)=\mathcal{F}(k+i).$$
This is an element in $\Gamma_{k+i}(\mathcal{F})$, which we define to be $mx$. This gives us an action of $M_*$ on $\Gamma_*(\mathcal{F})$.

The assignment $\mathcal{F}\mapsto \Gamma_*(\mathcal{F})$ defines the functor $\iota_*: \Qc(\Proj(M_*)) \rightarrow \set_{{\rm gr}M_*}$.

\begin{Le} The functor $\iota^*$ is the left adjoint of $\iota_*$.
\end{Le}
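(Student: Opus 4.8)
The plan is to exhibit, for every graded $M_*$-set $A_*$ and every quasi-coherent sheaf $\mathcal{F}$ on $X=\Proj(M_*)$, a bijection
$$\Hom_{\Qc(X)}(\iota^*(A_*),\mathcal{F})\longrightarrow \Hom_{{\rm gr}M_*}(A_*,\iota_*(\mathcal{F})),$$
natural in both variables. I would organise everything around the single identity $\iota^*(A_*)(n)\simeq \iota^*(A_*(n))$, which I record first. Indeed $\iota^*(A_*)(n)=\iota^*(A_*)\otimes_{\O_X}\O_X(n)$ and $\O_X(n)=\iota^*(M_*(n))$; restricting to $D(f)$ with $f\in M_1$, the line bundle $\O_X(n)$ is free of rank one with generator $f^n$ (as shown above), so multiplication by $f^n$ yields ${\sf Tot}(A_*)_{(f)}\otimes_{{\sf Tot}(M_*)_{(f)}}{\sf Tot}(M_*(n))_{(f)}\simeq {\sf Tot}(A_*(n))_{(f)}$, and these isomorphisms agree on the overlaps $D(f)\cap D(g)$, hence glue. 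With this in hand $\Gamma_n(\iota^*(A_*))=\Gamma(X,\iota^*(A_*(n)))$, and there is a canonical map $\eta_n\colon A_n\rightarrow \Gamma(X,\iota^*(A_*(n)))$ sending $a$ to the global section equal to $\tfrac{a}{1}$ on each $D(f)$. Running over $n$, these assemble into a unit $\eta_{A_*}\colon A_*\rightarrow \iota_*\iota^*(A_*)$, which one checks is a morphism of graded $M_*$-sets.

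For the forward map $\Phi$, given $\psi\colon\iota^*(A_*)\rightarrow\mathcal{F}$ I would tensor with $\O_X(n)$, read it through the identity above as $\iota^*(A_*(n))\rightarrow\mathcal{F}(n)$, take global sections, and precompose with $\eta_n$; this produces $\Phi(\psi)_n\colon A_n\rightarrow\Gamma_n(\mathcal{F})$, and the compatibility of the twists makes $\Phi(\psi)_*$ graded $M_*$-linear. For the inverse $\Psi$, given a graded map $\phi_*\colon A_*\rightarrow\iota_*(\mathcal{F})=\Gamma_*(\mathcal{F})$, I would define a morphism $\iota^*(A_*)\rightarrow\mathcal{F}$ on the affine cover $\{D(f)\}_{f\in M_1}$ (which covers $\Proj(M_*)$ by the lemma above): a section $\tfrac{a}{f^k}\in{\sf Tot}(A_*)_{(f)}$ with $a\in A_k$ is sent to the unique $t\in\Gamma(D(f),\mathcal{F})$ with $\phi_k(a)|_{D(f)}=t\cdot f^k$, using that $f^k$ is a free generator of $\O_X(k)|_{D(f)}$. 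The counit is then $\epsilon_\mathcal{F}=\Psi(\id_{\iota_*\mathcal{F}})$.

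The main obstacle is the well-definedness of $\Psi(\phi)$: that the prescription is independent of the representative $\tfrac{a}{f^k}$ and that the local morphisms glue to a genuine morphism of sheaves. On $D(f)$ the section $f$ generates the line bundle, so dividing by $f^k$ is unambiguous and compatible with the transition isomorphisms on $D(fg)$; the localisation relation $\tfrac{a}{f^k}=\tfrac{a'}{f^{k'}}$ translates into equalities of global sections which, once restricted to $D(f)$ where $f$ acts invertibly, give the required coincidence. This is precisely the kind of manipulation controlled by Lemma \ref{44123}, applied with $\mathcal{L}=\O_X(1)$ and $f\in M_1$ viewed in $\Gamma(X,\O_X(1))$, so that $X_f=D(f)$: part i) governs when two global sections already agreeing on $D(f)$ coincide after multiplying by a power of $f$, and part ii) governs extending a section of $\mathcal{F}$ over $D(f)$ to a global one after such a multiplication. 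I would invoke it to pass freely between global and local sections.

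Finally, verifying that $\Phi$ and $\Psi$ are mutually inverse reduces, via the affine cover, to elementary localisation identities on each ${\sf Tot}(A_*)_{(f)}$ and each $\Gamma(D(f),\mathcal{F})$; equivalently one checks the two triangle identities for $\eta$ and $\epsilon$, each of which is a local computation on $D(f)$ where $\O_X(n)$ is trivialised. Naturality in $A_*$ and in $\mathcal{F}$ is immediate, since every arrow in the construction is built from restriction maps, tensoring with $\O_X(n)$, and the canonical localisation maps, all of which are natural. This establishes that $\iota^*$ is left adjoint to $\iota_*$.
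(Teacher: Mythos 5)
Your proof is correct and takes essentially the same approach as the paper: both construct the Hom-set correspondence locally on the affine cover $\{D(f)\}_{f\in M_1}$, using that $f$ trivialises $\O_X(k)$ over $D(f)$ so that one may divide sections by $f^k$, and then verify compatibility on the overlaps $D(fg)$. The only difference is bookkeeping: you spell out both directions (the unit $\eta$ and the map $\Psi$) together with the mutual-inverse and naturality checks (your appeal to Lemma \ref{44123} there is harmless overkill, since $M_*$-linearity of $\phi$ and invertibility of $f$ on $D(f)$ already suffice), whereas the paper constructs only the direction from graded maps to sheaf maps and leaves bijectivity as a straightforward exercise.
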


\begin{proof} Let $A_*$ be a graded $M_*$-set and $\mathcal{F}$ be a quasi-coherent sheaf on $X=\Proj(M_*)$. We need to show that there is a bijective correspondence between morphisms $\alpha:A_*\rightarrow \Gamma_*(\mathcal{F})$ of graded $M_*$-sets and the morphisms of quasi-coherent sheaves of sets $\beta:\iota^*(A_*)\rightarrow \mathcal{F}$.

To define $\beta$ corresponding to $\alpha$, it suffices to define it on the affine subscheme $D(f)$ for each $f\in M_1$, and verify that these morphisms are compatible on the intersections. The category of quasi-coherent sheaves over $D(f)$ is equivalent to the category of $M_{*(f)}$-sets. Hence, we need to construct a morphism of $M_{*(f)}$-sets: $A_{*(f)}\rightarrow \Gamma(D(f), \mathcal{F})$.

Take an element $\frac{a}{f^k}$ of $A_{*(f)}$, where $a\in A_k$. Thus, $\alpha(a)\in \Gamma_k(\mathcal{F})$ is a morphism $\O\rightarrow \mathcal{F}(k)$. On the other hand, $\frac{1}{f^k}$ can be interpreted as a morphism $\mathcal {O}\rightarrow \O(-k)$ over $D(f)$. Hence, $\frac{1}{f^k}\otimes a$ defines a morphism $\O\rightarrow \mathcal{F}$, which is an element of $\Gamma(D(f), \mathcal{F})$.

To show that it is the restriction of a morphism $\beta(\frac{a}{f^k})$ on $D(f)$, one needs to check that they are compatible on intersection. To this end take $g\in M_1$. The image of $\frac{ag^k}{(fg)^k}$ is
$$\alpha(ag^k)\otimes \frac{1}{f^kg^k}=\alpha(a)\otimes \frac{g^k}{f^kg^k}=\frac{\alpha(a)}{f^k}.$$
These local data are therefore compatible and we get  $\beta:\iota^*(A_*)\rightarrow \mathcal{F}$. Thus, we have defined a map $\alpha\mapsto \beta$. It is a straightforward exercise to see that this is a bijection.
\end{proof}

We have constructed the geometric morphism $\iota:=(\iota^*,\iota_*):\Qc(\Proj(M_*)) \rightarrow \set_{{\rm gr}M_*}.$

\begin{Pro} \label{752} The geometric morphism $\iota:\Qc(\Proj(M_*)) \rightarrow \set_{{\rm gr}M_*}$ is a localisation of topoi.
\end{Pro}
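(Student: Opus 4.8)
The plan is to show that the inverse image functor $\iota^*$ is a localisation of categories; by the criterion recorded in the Localisations subsection this reduces, since $\iota^*$ is already known to preserve all colimits and finite limits, to proving that the counit $\varepsilon\colon\iota^*\iota_*\to\id_{\Qc(\Proj(M_*))}$ is an isomorphism (equivalently, that the direct image $\iota_*$ is full and faithful). First I would set up exactly this reduction, so that the whole problem becomes the single statement that $\varepsilon_{\mathcal F}$ is an isomorphism for every quasi-coherent sheaf $\mathcal F$.

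Next, since quasi-coherent sheaves are detected on an affine open cover and both $\iota^*$ and $\iota_*$ are compatible with restriction, I would argue that the isomorphy of $\varepsilon_{\mathcal F}$ may be checked locally. Using the cover $\{D(f)\}_{f\in M_1}$ (finiteness being guaranteed by the preceding lemma, which produces a finite subcover $D(f_1),\dots,D(f_s)$), it suffices to show that $\varepsilon_{\mathcal F}$ restricts to an isomorphism over each $D(f)$, $f\in M_1$. Over $D(f)$ one has $\Qc(D(f))\simeq M_{*(f)}\text{-}\mathsf{sets}$; under this equivalence $\mathcal F|_{D(f)}$ corresponds to $\Gamma(D(f),\mathcal F)$, while $(\iota^*\iota_*\mathcal F)|_{D(f)}$ corresponds to the degree-zero localisation $(\Gamma_*(\mathcal F))_{(f)}$. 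I would then verify that, under these identifications, the counit becomes the natural comparison map
$$\theta_f\colon(\Gamma_*(\mathcal F))_{(f)}\longrightarrow\Gamma(D(f),\mathcal F),\qquad \tfrac{s}{f^{k}}\longmapsto \tfrac{s|_{D(f)}}{f^{k}},$$
where we use that $f^{k}$ trivialises $\O_X(k)$ over $D(f)$.

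The crux is to prove that $\theta_f$ is a bijection, and this is precisely what Lemma \ref{44123} delivers for the line bundle $\mathcal L=\O_X(1)$ together with its global section $f\in\Gamma(X,\O_X(1))$, noting that $X_f=D(f)$ for $f\in M_1$. Surjectivity of $\theta_f$ follows from part ii): given $t\in\Gamma(D(f),\mathcal F)$, some $f^{n}t\in\Gamma(X_f,\mathcal F(n))$ extends to a global section $s\in\Gamma(X,\mathcal F(n))=\Gamma_n(\mathcal F)$, so $\tfrac{s}{f^{n}}$ is a preimage. Injectivity follows from part i): after clearing denominators to a common power $f^{k}$, two classes $\tfrac{s_1}{f^{k}},\tfrac{s_2}{f^{k}}$ with equal image arise from sections $s_1,s_2\in\Gamma_k(\mathcal F)$ agreeing on $X_f=D(f)$, whence $f^{N}s_1=f^{N}s_2$ in $\Gamma_{k+N}(\mathcal F)$ for some $N$, so the two classes already coincide in the localisation. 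Hence each $\theta_f$, and therefore each $\varepsilon_{\mathcal F}|_{D(f)}$, is an isomorphism; as the $D(f)$ cover $\Proj(M_*)$, the counit $\varepsilon_{\mathcal F}$ is an isomorphism, and $\iota^*$ is a localisation of categories, i.e. $\iota$ is a localisation of topoi.

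The main obstacle I anticipate is not the bijectivity itself, which Lemma \ref{44123} is tailored to produce, but the bookkeeping required to check that the abstractly defined counit $\varepsilon$ genuinely agrees with the explicit map $\theta_f$ under the equivalence $\Qc(D(f))\simeq M_{*(f)}\text{-}\mathsf{sets}$, together with pinning down the identification $X_f=D(f)$ for $f\in M_1$ so that the line-bundle hypotheses of Lemma \ref{44123} are legitimately in force.
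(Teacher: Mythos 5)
Your proposal is correct and follows essentially the same route as the paper: reduce to showing the counit $\iota^*\iota_*\to\id$ is an isomorphism, check this on the cover $\{D(f)\}_{f\in M_1}$ where it becomes the comparison map $(\Gamma_*(\mathcal{F}))_{(f)}\to\Gamma(D(f),\mathcal{F})$, and deduce bijectivity from Lemma \ref{44123} applied to $\mathcal{L}=\O_X(1)$ with $X_f=D(f)$. The paper simply states this bijectivity as a direct consequence of that lemma, whereas you spell out how surjectivity and injectivity follow from parts ii) and i) respectively; this is a harmless elaboration, not a different argument.
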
 

\begin{proof} It suffices to show that the canonical map $\beta:\iota^*(\Gamma_*(\mathcal{F}))\rightarrow \mathcal{F}$ is an isomorphism for any quasi-coherent sheaf $\mathcal{F}$ over $\Proj(M_*)$. This is equivalent to saying that there is an isomorphism
$$\Gamma_*(D(f),\mathcal{F})_{(f)}\simeq\Gamma(D(f),\mathcal{F})$$
for any $f\in M_1$.
Consider $f$ as an element of $\Gamma(X, \O_X(1))$. The desired isomorphism is a direct consequence of Lemma \ref{44123}, where the subset $X_f$ is exactly $D(f)$.
\end{proof}

\subsection{Points of the topos $\Qc(\Proj(M_*))$}

We will assume that our (positively) graded monoids satisfy the finiteness conditions listed in Section \ref{74fin}.

We would like to investigate the points of the topos $\Qc(\Proj(M_*))$. Recall that we have a topos point
$${\sf Stalk}(x):\Qc(\Proj(M_*))\rightarrow \set$$
for any $x\in X=\Proj(M_*)$, given by $\mathcal{F}\mapsto \mathcal{F}_x$.

We will show that any point of the topos $\Qc(\Proj(M_*))$ is of this form. We will need two auxiliary results for this.

Denote by $\Sigma$ the following class of morphisms in $\set_{{\rm gr}M_*}$: A morphism $\alpha:X_*\to Y_*$ of graded $M_*$-sets belongs to $\Sigma$, if $\iota(\alpha): \iota(X_*)\to \iota(Y_*)$ is an isomorphism.

\begin{Le} \label{762} Let $\p$ be a prime ideal of $\,{\sf Tot}(M_*)$. If the functor $A_*\mapsto (A_{*}(i))_{(\p)}$ takes morphisms from $\Sigma$ to isomorphisms, then $\p\in \Proj(M_*)$.
\end{Le}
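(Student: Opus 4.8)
The plan is to prove the contrapositive: assuming $\p\notin\Proj(M_*)$, equivalently $M_+\subseteq\p$, I would exhibit a single morphism lying in $\Sigma$ that the functor $F\colon A_*\mapsto (A_*(i))_{(\p)}$ fails to send to an isomorphism. The candidate is the inclusion of the irrelevant ideal, suitably shifted: let $\beta\colon M_+\hookrightarrow M_*$ be the inclusion of $M_+=\coprod_{d>0}M_d$ as a graded $M_*$-subset of $M_*$, and set $\alpha:=\beta(-i)\colon (M_+)(-i)\to M_*(-i)$.

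First I would check that $\alpha\in\Sigma$, i.e.\ that $\iota^*(\alpha)$ is an isomorphism, using the mono/epi criteria of Lemma \ref{722} and Lemma \ref{723}. Monomorphy is immediate, since $\alpha$ is degreewise an inclusion. For epimorphy, given any $f\in M_d$ with $d\geq 1$ and any $z$ in the relevant degree of $M_*(-i)$, the element $zf^m$ has degree tending to infinity with $m$, hence lies in $M_+$ for $m$ large; this is precisely condition (ii) of Lemma \ref{723}. (There is such an $f$ since $M_1\neq\emptyset$; the degenerate case $M_+=\emptyset$, where $\Proj(M_*)=\emptyset$, can be dispatched separately.)

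Then I would compute $F(\alpha)$ and show it is not an isomorphism. The key observation is that when $M_+\subseteq\p$ the multiplicative set $S={\sf Tot}(M_*)\setminus\p$ is contained in $M_0$, so localising at $\p$ inverts only elements of degree $0$. Consequently $(A_*(i))_{(\p)}$ collapses to the localisation $(A_i)_{\p_0}$ of the degree-$i$ component $A_i$, regarded as an $M_0$-set, at $\p_0=\p\cap M_0$. Applying this to $\alpha=\beta(-i)$, the shifts cancel, so the source becomes $(M_+)_{(\p)}$, which is empty because $M_+$ has no elements of degree $0$, while the target becomes $(M_*)_{(\p)}=(M_0)_{\p_0}$, which is non-empty (it contains $\tfrac{1}{1}$). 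Hence $F(\alpha)\colon\emptyset\to (M_0)_{\p_0}$ is not an isomorphism, contradicting the hypothesis and forcing $\p\in\Proj(M_*)$.

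The main obstacle is pinpointing the right test morphism. The inclusion $M_+\hookrightarrow M_*$ becomes an isomorphism under $\iota^*$ exactly because every element can be rescaled by a positive-degree $f$ into positive degree, yet it is visibly ``not surjective in degree $0$'', and this degree-$0$ discrepancy is precisely what a point lying over a prime $\p$ with $M_+\subseteq\p$ is forced to detect. Once this morphism is identified, the remaining work is the careful bookkeeping of degrees through the shift by $-i$ and the $\p$-localisation, which is routine.
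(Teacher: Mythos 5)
Your proof is correct and is essentially the paper's own argument: both use the inclusion $M_+\hookrightarrow M_*$ as the test morphism in $\Sigma$ (verified exactly as you do, via Lemmas \ref{722} and \ref{723}) and conclude from degree bookkeeping in the localisation at $\p$ — your observation that $S\subseteq M_0$ forces $(M_+)_{(\p)}=\emptyset$ is just the contrapositive of the paper's step that $1=\frac{m}{s}$ with $m\in M_+$ forces $|s|=|m|>0$, hence $s\in M_+\setminus\p$. The only other difference is that you handle the shift by pre-composing with $(-)(-i)$, which is precisely the justification behind the paper's ``without loss of generality $i=0$''.
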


\begin{proof} Without loss of generality, we may assume that $i=0$. Observe that the natural inclusion $M_+\subseteq {\sf Tot}(M_*)$ belongs to $\Sigma$. Hence, $(M_+)_{(\p)}\rightarrow M_{*(\p)}$ is an isomorphism, thanks to Lemmas \ref{722} and \ref{723}. It follows that $1$ is in the image of this map. We have $1=\frac{m}{s}$, where $s\nin \p$ and $m\in M_+$. Thus, $|s|=|m|>0$ and $s\in M_+$. It follows that $M_+\setminus \p\not=\emptyset$ and $\p\in \Proj(M_*)$. 
\end{proof}

\begin{Le} \label{763} Let $\p\in \Proj(M_*)$. The functors $A_*\mapsto(A_{*}(i))_{(\p)}$ and $A_*\mapsto(A_{*})_{(\p)}$ are isomorphic for all $i\in \Z$.
\end{Le}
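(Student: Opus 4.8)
The plan is to build an \emph{explicit} natural isomorphism between the two functors, realised as multiplication by a homogeneous invertible element of degree one in the localised graded monoid $M_{*\p}$. The starting observation is that localisation commutes with the shift, so that $(A_*(i))_\p = A_{*\p}(i)$ and hence
$(A_{*})_{(\p)} = (A_{*\p})_0$ while $(A_{*}(i))_{(\p)} = (A_{*\p}(i))_0 = (A_{*\p})_i$. Thus the two functors are nothing but the degree-$0$ and the degree-$i$ components of the graded localisation $A_{*\p}$, and it suffices to produce a natural bijection $(A_{*\p})_0 \xrightarrow{\sim} (A_{*\p})_i$. The single piece of geometric input is the existence of a degree-one unit: since $\p \in \Proj(M_*)$, the covering lemma of Section \ref{74fin} (the sets $D(f_j)$ with $f_j \in M_1$ cover $\Proj(M_*)$, using that $M_0$ and $M_1$ generate ${\sf Tot}(M_*)$) yields some $f \in M_1$ with $f \nin \p$. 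As $f$ lies in the multiplicative set ${\sf Tot}(M_*)\setminus\p$, it becomes invertible in $M_{*\p}$, with inverse $\tfrac1f$ of degree $-1$.

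Next I would define, for each integer $i$, the map $\eta^i_{A_*}\colon (A_{*})_{(\p)}\to (A_{*}(i))_{(\p)}$ by multiplication by $f^i$, i.e.\ on fractions $\tfrac{a}{r}\mapsto \tfrac{f^i a}{r}$ (and $f^i$ interpreted through $\tfrac1f$ when $i<0$). This lands in the correct slice because if $|a|=|r|$ then $f^i a$ has degree $i+|r|$, which is exactly the condition defining $(A_{*}(i))_{(\p)}$. I would then record the routine commutative checks: $\eta^i$ is independent of the chosen representative (multiply the defining relation $a s r' = a' s r$ by $f^i$), it is a homomorphism of $M_{*(\p)}$-sets (by commutativity, $f^i$ passes through the action), and it is natural in $A_*$, since any morphism $g_*\colon A_*\to B_*$ of graded $M_*$-sets satisfies $g(f^i a)=f^i g(a)$ and so commutes with $\eta^i$. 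Bijectivity is immediate from the two-sided inverse $\tfrac{b}{r}\mapsto \tfrac{b}{f^i r}$: both composites replace $\tfrac{f^i a}{f^i r}$ by $\tfrac{a}{r}$ inside the localisation, with witness $s=1$. Assembling these gives the desired isomorphism of functors for every $i\in\Z$.

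I do not anticipate any genuine obstacle here. The entire content is the production of a degree-one homogeneous unit outside $\p$, which is precisely what the hypothesis $\p\in\Proj(M_*)$ together with the standing finiteness assumptions of Section \ref{74fin} provide; once such an $f$ is in hand, the remaining verifications are formal manipulations of fractions in a commutative localisation and the elementary fact that multiplication by a homogeneous unit identifies the graded components of a graded set degree by degree.
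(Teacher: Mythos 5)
Your proposal is correct and follows essentially the same route as the paper: both arguments extract a degree-one element $f\in M_1$ with $f\nin\p$ from the hypothesis $\p\in\Proj(M_*)$ together with the finiteness assumptions, and then use multiplication by $f^i$ (the $i$-th iterate of the localised isomorphism $l_f$, which is invertible in $M_{*\p}$ and raises degree by one) to identify the degree-$0$ and degree-$i$ components of $A_{*\p}$. The only difference is presentational: the paper states this at the level of the map $l_f$ on ${\sf Tot}(A_*)_\p$, while you carry out the fraction-level verifications explicitly, which the paper leaves implicit.
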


\begin{proof} By definition, $(A_{*})_{(\p)}$ (resp. $(A_{*}(i))_{(\p)})$ is the degree zero (resp. degree $i$-th)  part of ${\sf Tot}(A_*)_\p$. It follows from our finiteness assumption that there exists an element $f\in M_1$, such that $f\nin \p$. Hence, $l_f:{\sf Tot}(A_*)_\p \rightarrow {\sf Tot}(A_*)_\p$, $x\mapsto fx$ is an isomorphism which raises the degree by one. The restriction of the $i$-th iteration of this isomorphism on $(A_{*})_{(\p)}$ yields the isomorphism in question.
\end{proof}

Now we are in the position to state and prove the following result:

\begin{Th} Let $M_*$ be a positively graded commutative monoid satisfying the finiteness conditions listed in Section \ref{74fin} and let $P^*: \Qc(\Proj(M_*))\rightarrow \set$ be the inverse image of a topos point $\Qc(\Proj(M_*))$. There exists an isomorphism of points $P^*\simeq {\sf Stalk}(x)$ for some element $x\in\Proj(M_*)$.
\end{Th}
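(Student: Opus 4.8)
The plan is to reduce the statement to the classification of the points of $\set_{{\rm gr}M_*}$ obtained in Theorem \ref{pgrprime}, and then to transport that information back across the localisation $\iota$ of Proposition \ref{752}. First I would compose the given point with $\iota$. A point $P:\set\to\Qc(\Proj(M_*))$ together with the geometric morphism $\iota:\Qc(\Proj(M_*))\to\set_{{\rm gr}M_*}$ yields a point $\iota\circ P$ of $\set_{{\rm gr}M_*}$ whose inverse image functor is $P^*\circ\iota^*$. By Theorem \ref{pgrprime}, every point of $\set_{{\rm gr}M_*}$ has the form $A_*\mapsto (A_*(i))_{(\p)}$ for some integer $i$ and some prime ideal $\p$ of ${\sf Tot}(M_*)$, so there is a natural isomorphism $P^*\circ\iota^*\simeq\big(A_*\mapsto (A_*(i))_{(\p)}\big)$.

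Next I would pin down $\p$. Recall that $\Sigma$ consists of those $\alpha$ for which $\iota^*(\alpha)$ is an isomorphism. Since $P^*$ preserves isomorphisms, the functor $P^*\circ\iota^*$, and hence $A_*\mapsto (A_*(i))_{(\p)}$, sends every morphism of $\Sigma$ to an isomorphism; Lemma \ref{762} then forces $\p\in\Proj(M_*)$. With $\p$ now a genuine point of $\Proj(M_*)$, Lemma \ref{763} lets me discard the shift, since $A_*\mapsto (A_*(i))_{(\p)}$ is isomorphic to $A_*\mapsto A_{*(\p)}$. Thus $P^*\circ\iota^*\simeq\big(A_*\mapsto A_{*(\p)}\big)$.

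Now I would set $x=\p\in\Proj(M_*)$ and compare with its stalk. The key identification is that ${\sf Stalk}(x)\circ\iota^*\simeq\big(A_*\mapsto A_{*(\p)}\big)$ as well: by construction $\Gamma(D(f),\iota^*(A_*))={\sf Tot}(A_*)_{(f)}$, and the stalk at $\p$ is the filtered colimit of these sections over the basic neighbourhoods $D(f)$ with $f\in M_+\setminus\p$, which recovers exactly the degree-zero part $A_{*(\p)}$ of ${\sf Tot}(A_*)_\p$. Consequently $P^*\circ\iota^*\simeq {\sf Stalk}(x)\circ\iota^*$, i.e.\ the points $\iota\circ P$ and $\iota\circ {\sf Stalk}(x)$ of $\set_{{\rm gr}M_*}$ are isomorphic. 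Since $\iota$ is a localisation of topoi, the induced map ${\sf F}_\iota$ on isomorphism classes of points is injective by Lemma \ref{points_after_loc}, so $P\simeq {\sf Stalk}(x)$, and in particular $P^*\simeq {\sf Stalk}(x)$, as required.

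The hard part will be the identification ${\sf Stalk}(x)\circ\iota^*\simeq\big(A_*\mapsto A_{*(\p)}\big)$ in the third step: I must make precise that the stalk colimit taken over the $D(f)$ with $f\in M_+\setminus\p$ recovers the \emph{full} degree-zero localisation $A_{*(\p)}$ and not merely a localisation at homogeneous elements of positive degree. This is exactly where the finiteness hypotheses enter, since they guarantee an element $f\in M_1\setminus\p$ (the $D(f)$, $f\in M_1$, cover $\Proj(M_*)$), and multiplication by such an $f$ reconciles the degrees precisely as in Lemma \ref{763}. The remaining bookkeeping — cancelling $\iota^*$ via the fact that the counit $\iota^*\iota_*\to\id$ is an isomorphism, equivalently invoking the injectivity in Lemma \ref{points_after_loc} — is then routine.
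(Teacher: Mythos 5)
Your proof is correct and takes essentially the same route as the paper's: both use Proposition \ref{752} and Lemma \ref{points_after_loc} to reduce to points of $\set_{{\rm gr}M_*}$ whose inverse image inverts $\Sigma$, classify these via Theorem \ref{pgrprime}, and then conclude with Lemmas \ref{762} and \ref{763}. The only difference is that you spell out the identification ${\sf Stalk}(\p)\circ\iota^*\simeq\bigl(A_*\mapsto A_{*(\p)}\bigr)$, which the paper leaves implicit.
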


\begin{proof} By Lemma \ref{points_after_loc} and Proposition \ref{752}, the isomorphism classes of topos points $\Qc(\Proj(M_*))$ correspond to the isomorphism classes of points $P^*: \set_{{\rm gr}M_*}\rightarrow \set$, for which $P^*(\alpha)$ is a bijection for any $\alpha\in \Sigma$. Here, $\Sigma$ is the same as in Section \ref{74fin}.

On the other hand, Theorem \ref{pgrprime} gives a description of all the points of the topos $\set_{{\rm gr}M_*}$. The result follows from Lemmas \ref{762} and \ref{763}.
\end{proof}

\begin{Co}\label{7540704} Let $M_*$ be a positively graded commutative monoid satisfying the finiteness conditions listed in Section \ref{74fin} and let $U$ be an open monoid subscheme of $X=\Proj(M_*)$. Any $P^*: \Qc(U)\rightarrow \set$ is isomorphic to a point of the form ${\sf Stalk}(x)$, $x\in U$.
\end{Co}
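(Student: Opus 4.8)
The plan is to reduce the statement about an open subscheme $U$ to the theorem just proved for the whole projective scheme $X=\Proj(M_*)$. The key observation is that $U$ is an open monoid subscheme of $X$, so by the localisation results established earlier, the inclusion $j:U\rightarrow X$ induces a localisation of categories $j^*:\Qc(X)\rightarrow \Qc(U)$. This is exactly the setting of Lemma \ref{points_after_loc}, which tells us that the induced map on isomorphism classes of points ${\sf F}_j:{\sf F}_{\Qc(U)}\rightarrow {\sf F}_{\Qc(X)}$ is injective. The idea is that every point of $\Qc(U)$ pushes forward to a point of $\Qc(X)$, which by the preceding theorem must be a stalk ${\sf Stalk}(x)$ for some $x\in X$, and then to argue that this $x$ in fact lies in $U$ and that the original point is the stalk at $x$ computed in $U$.

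Concretely, I would proceed as follows. First, given a point $P:\set\rightarrow \Qc(U)$ with inverse image $P^*$, compose with the localisation to obtain a point of $\Qc(X)$ whose inverse image is $P^*\circ j^*$. Since $X$ satisfies the finiteness conditions of Section \ref{74fin}, the preceding theorem applies and yields an element $x\in X=\Proj(M_*)$ with $P^*\circ j^* \simeq {\sf Stalk}(x)$. The next step is to show $x\in U$. For this I would use that $U$ is open: if $x\notin U$, then for a suitable quasi-coherent sheaf supported away from $x$ (for instance, comparing the stalk of $\O_X$ against its image under $j^*j_*$, using that $j^*j_*\simeq\id$ as in Lemma \ref{points_after_loc}), the stalk functor at $x$ would behave incompatibly with a functor that factors through the restriction $j^*$. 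More precisely, because the geometric morphism factors through $j^*$, the inverse image $P^*\circ j^*$ only depends on the restriction of sheaves to $U$, and the stalk at a point $x\notin U$ cannot be recovered from such restrictions in a non-degenerate way; this forces $x\in U$.

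Once $x\in U$ is established, it remains to identify $P^*$ itself with ${\sf Stalk}(x):\Qc(U)\rightarrow\set$. Here I would invoke the fact that for $x\in U$ the stalk of a quasi-coherent sheaf $\mathcal F$ on $U$ agrees with the stalk of $j_*\mathcal F$ (or of any quasi-coherent extension) at $x$ in $X$, together with the counit isomorphism $j^*j_*\simeq\id$. This gives ${\sf Stalk}(x)\circ j^* \simeq {\sf Stalk}(x)$ on $X$ and hence, after cancelling $j^*$ using the injectivity from Lemma \ref{points_after_loc}, the desired isomorphism $P^*\simeq {\sf Stalk}(x)$ as points of $\Qc(U)$.

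I expect the main obstacle to be the middle step: proving that the point $x\in X$ produced by the theorem actually lies in the open subscheme $U$, and doing so cleanly at the level of the abstract geometric morphisms rather than by an ad hoc stalk computation. The injectivity in Lemma \ref{points_after_loc} guarantees uniqueness of the lift but does not immediately constrain its location; the real content is that a point factoring through the localisation $j^*$ must be supported in $U$, which is the topos-theoretic incarnation of the classical fact that the points of the restricted site are precisely the points of the ambient site lying in the open set.
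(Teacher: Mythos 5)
Your overall route is the intended one (the paper states this as an immediate corollary of the preceding theorem, via exactly the localisation $j=(j^*,j_*):\Qc(U)\rightarrow\Qc(X)$), and your first and last steps are correct: composing $P$ with $j$ gives a point of $\Qc(X)$ with inverse image $P^*\circ j^*$, the theorem yields $P^*\circ j^*\simeq{\sf Stalk}(\p)$ for some $\p\in X=\Proj(M_*)$, and once one knows $\p\in U$, the identification ${\sf Stalk}_U(\p)\circ j^*\simeq{\sf Stalk}_X(\p)$ together with the injectivity of Lemma \ref{points_after_loc} cancels $j^*$ and gives $P^*\simeq{\sf Stalk}_U(\p)$. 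The genuine gap is the middle step, which you yourself single out as the main obstacle: the assertion that ``the stalk at a point $x\notin U$ cannot be recovered from such restrictions in a non-degenerate way'' is a restatement of what must be proved, not an argument. Moreover, the test you hint at points in the wrong direction: $j^*j_*\simeq\id$ is an isomorphism of endofunctors of $\Qc(U)$, so it carries no information whatsoever about points of $X$ outside $U$; the relevant comparison would have to be a morphism in $\Qc(X)$ that $j^*$ inverts but whose stalk at a point of $X\setminus U$ is not bijective, and no such morphism is exhibited in your proposal.

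Here is how to close the gap, by the same mechanism as Lemma \ref{762}. Since ${\sf Tot}(M_*)=\coprod_{d\geq 0}M_d$, every element of ${\sf Tot}(M_*)$ is homogeneous, so every ideal $\mathfrak{a}$ of ${\sf Tot}(M_*)$ is automatically graded, i.e. $\mathfrak{a}_*=(\mathfrak{a}\cap M_d)_d$ is a graded $M_*$-subset of $M_*$. As $U$ is open in $\Spec({\sf Tot}(M_*))$, we may write $X\setminus U=V(\mathfrak{a})\cap X$ for such an ideal, and consider the monomorphism $\alpha:\iota^*(\mathfrak{a}_*)\hookrightarrow\O_X$ ($\iota^*$ preserves finite limits). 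For every $\q\in U$ one has $\mathfrak{a}\not\subseteq\q$, so choosing $a\in\mathfrak{a}\setminus\q$ gives $1=\frac{a}{a}$ in the stalk $(\mathfrak{a}_*)_{(\q)}$; since the image of that stalk is an ideal of $M_{*(\q)}=\O_{X,\q}$ containing $1$, the map $\alpha_\q$ is bijective. Hence $j^*(\alpha)$ is stalkwise bijective, i.e. an isomorphism, and therefore $P^*(j^*(\alpha))$ is a bijection, which by $P^*\circ j^*\simeq{\sf Stalk}(\p)$ means that $\alpha_\p:(\mathfrak{a}_*)_{(\p)}\rightarrow M_{*(\p)}$ is bijective. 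Thus $1=\frac{a}{s}$ with $a\in\mathfrak{a}$ and $s\notin\p$ homogeneous of equal degree, so $at=st$ for some $t\notin\p$; since $\p$ is prime, $st\notin\p$, hence $a\notin\p$, i.e. $\mathfrak{a}\not\subseteq\p$ and $\p\in U$, as required. (Two details you use implicitly and should record: the lemma making $j^*:\Qc(X)\rightarrow\Qc(U)$ a localisation requires $X$ to be $s$-noetherian, which holds because the finiteness conditions of Section \ref{74fin} make each $M_{*(f_i)}$ almost finitely generated; and the compatibility ${\sf Stalk}_U(\p)\circ j^*\simeq{\sf Stalk}_X(\p)$ holds since stalks at points of $U$ are computed from neighbourhoods inside $U$.)
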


We call a monoid scheme $X$ \emph{quasi-projective} if it is isomorphic to an open monoid subscheme of $\Proj(M_*)$. Here, $M_*$ is a non-negative graded monoid satisfying the finiteness conditions listed in Section \ref{74fin}.

\begin{Th}\label{7550804} Let $X$ and $Y$ be quasi-projective monoid schemes such that the categories $\Qc(X)$ and $\Qc(Y)$ are equivalent. Then $X$ and $Y$ are isomorphic. 
\end{Th}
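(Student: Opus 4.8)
The plan is to reconstruct the monoid space $(X,\O_X)$ — and hence the monoid scheme $X$ — entirely from the abstract topos $\Qc(X)$, so that an equivalence $\Qc(X)\simeq \Qc(Y)$ transports all of the reconstructed data and forces $X\cong Y$. An equivalence of topoi induces an equivalence $\Pts(\Qc(X))\simeq \Pts(\Qc(Y))$ and hence a bijection on isomorphism classes of points. By Corollary \ref{7540704} every point of $\Qc(X)$ is, uniquely up to isomorphism, of the form ${\sf Stalk}(x)$ for a point $x$ of the underlying space of $X$, and likewise for $Y$; this already yields a bijection $\phi\colon |X|\to |Y|$ of underlying sets.

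First I would recover the topology. The finiteness conditions of Section \ref{74fin} make ${\sf Tot}(M_*)$ almost finitely generated, so $\Spec({\sf Tot}(M_*))$ is finite; as $X$ is an open subscheme of $\Proj(M_*)\subseteq\Spec({\sf Tot}(M_*))$, the space $|X|$ is finite and sober, and its topology is therefore determined by its specialization order. This order is intrinsic to the topos: by Corollary \ref{x5}, applied on affine charts $\Spec(M)\ni x$, one has ${\sf Stalk}(x')\in\Delta({\sf Stalk}(x))$ precisely when $x'$ is a generization of $x$. Since the construction $P\mapsto\Delta(P)$ depends only on the topos — it is built from $\End(P)$ and the canonical factorization of $P$ through $\set_{\End(P)}$ — the equivalence preserves it, so $\phi$ is an isomorphism of specialization posets and hence a homeomorphism.

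It remains to recover the structure sheaf. On a finite sober space a sheaf of monoids is the same datum as a functor from the specialization poset to monoids, namely $x\mapsto \O_{X,x}$ together with the generization homomorphisms $\O_{X,x}\to\O_{X,x'}$ for $x'$ a generization of $x$. I would show that both pieces are intrinsic to $\Qc(X)$. For the stalks, I would compute $\End({\sf Stalk}(x))\simeq \O_{X,x}$: on an affine chart $\Spec(M)$ the stalk point has inverse image $-\otimes_M M_\p$, and natural transformations of this functor correspond to $M$-set endomorphisms of $M_\p$, which are exactly multiplication by elements of $M_\p$, so $\End({\sf Stalk}(x))\simeq M_\p=\O_{X,x}$ as a monoid. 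For the generization maps, when ${\sf Stalk}(x')\in\Delta({\sf Stalk}(x))$, Lemma \ref{x4} identifies the factorizing point with $-\otimes_{M_\p}M_{\p'}$, and the induced homomorphism $\End({\sf Stalk}(x))\to\End({\sf Stalk}(x'))$ is the canonical localization $M_\p\to M_{\p'}$, i.e. exactly the generization map of $\O_X$. Assembling these over the poset $|X|$ reconstructs $(X,\O_X)$ up to isomorphism purely from $\Qc(X)$; transporting along the equivalence then gives an isomorphism of monoid spaces $X\cong Y$, which is automatically an isomorphism of monoid schemes.

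The main obstacle I expect is this last reconstruction of the structure sheaf, and within it two points in particular: verifying that $\End({\sf Stalk}(x))$ computed in the global topos $\Qc(X)$ agrees with the one computed in an affine chart, so that it really equals the stalk monoid $\O_{X,x}$ rather than some global variant; and checking that the localization maps extracted from the $\Delta$-factorizations coincide, as monoid homomorphisms, with the restriction maps of $\O_X$. The reduction of the topology to the specialization order relies on the finiteness of $|X|$, which is precisely why the hypotheses of Section \ref{74fin} are essential here.
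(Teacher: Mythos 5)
Your proposal matches the paper's proof in all essentials: the paper likewise reconstructs $X$ from $\Qc(X)$ by identifying topos points with stalks (Corollary \ref{7540704}), recovering the specialization order via the rule $P\leq Q$ if and only if $P\in\Delta(Q)$ (Corollary \ref{x5}), and recovering the stalk monoids as $\End(P)\simeq\O_{X,P}$; the only presentational difference is that the paper cites \cite[Proposition 2.11]{chww} and \cite[Proposition 2.3]{p2} for the principle that the poset together with the stalk functor determines the monoid scheme, whereas you rederive this by hand from finiteness and sobriety. The subtleties you flag at the end (agreement of $\End$ and $\Delta$ computed globally versus on an affine chart) are likewise left implicit in the paper's own argument.
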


\begin{proof} The monoid scheme structure of $X$ is determined by the underlying ordered set and stalk functors $x\mapsto \O_{X,x}$, as $X$ it is of finite type \cite[Proposition 2.11]{chww}, \cite[Proposition 2.3]{p2}. It suffices to show that the category of quasi-coherent sheaves determines $X$ as a poset and the stalk functors. 

Let us consider $\pts(\Qc(X))$, the equivalence classes of topos points of $\Qc(X)$. Define an ordering on these equivalence classes by saying that $P\leq Q$ if $P\in\Delta(Q)$. Here, $P,Q\in\pts(\Qc(X))$ are topos points. Corollary \ref{x5} shows that this is a partial ordering in our case. Further, it agrees with the ordering of the underlying set of $X$. We have also an isomorphism of monoids $\O_{X,P}\simeq \End(P)$ and the result follows.
\end{proof}

\section{Infinitely generated commutative monoids}\label{connes}

We will now investigate the case of non-finitely commutative generated monoids. We will give a description of the topos points of free commutative monoids, and show that in this case, the prime ideals are no longer enough to classify them all.

\subsection{Cancellative monoids}

Recall that a monoid $M$ is called cancellative if for any elements $a,b,c$ of $M$, $ab=ac$ implies that $b=c$. Condition $(F2)$ in Lemma \ref{Fcond} can then be simplified as saying that if $m_1a=m_2a$, then $m_1=m_2$.

\begin{Le} Let $M$ be a cancellative monoid and $A$ be a filtered $M$-set.
\begin{itemize}
\item[i)] Let $a,b\in A$ and $m\in M$. If $ma=mb$, then $a=b$.
\item[ii)] Let $Ma=Mb$, where $a,b\in A$. There exists an invertible element $m\in M$ such that $a=mb$.
\end{itemize}
\end{Le}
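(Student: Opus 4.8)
The plan is to exploit the simplified form of condition (F2) available for cancellative monoids—namely that $m_1a=m_2a$ forces $m_1=m_2$—together with the filtering condition (F3) from Lemma \ref{Fcond}. Recall that $M$ is commutative throughout this section, and that $A$ is filtered by hypothesis.

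For part i), the immediate difficulty is that the simplified (F2) only compares two scalars acting on a \emph{single} element, whereas we are handed $ma=mb$ with $a$ and $b$ possibly distinct. First I would invoke (F3) for the pair $a,b$ to produce a common element $c\in A$ and scalars $m_1,m_2\in M$ with $m_1c=a$ and $m_2c=b$. Substituting into $ma=mb$ turns it into $mm_1c=mm_2c$, an equation in which the same element $c$ now appears on both sides. The simplified (F2) applied to $c$ then gives $mm_1=mm_2$, and cancellativity of $M$ yields $m_1=m_2$. Hence $a=m_1c=m_2c=b$, as desired.

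For part ii), I would simply unwind the hypothesis $Ma=Mb$: it says exactly that $a\in Mb$ and $b\in Ma$, so there are $m_1,m_2\in M$ with $a=m_1b$ and $b=m_2a$. Combining these gives $a=m_1m_2a$, while trivially $a=1\cdot a$; applying the simplified (F2) to the element $a$ forces $m_1m_2=1$. Since $M$ is commutative, $m_1$ is therefore invertible with inverse $m_2$, and the relation $a=m_1b$ exhibits the required invertible element $m=m_1$.

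The only genuine subtlety is the passage to a common base point in part i) via (F3); without it the simplified (F2) is not directly applicable, since it governs a single element rather than a pair of distinct points. Once this reduction is in place, both statements collapse to one application of (F2) and, in part i), a single cancellation, so no further calculation is required.
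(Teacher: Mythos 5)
Your proposal is correct and follows essentially the same route as the paper: part i) uses (F3) to find a common element $c$ with $a=m_1c$, $b=m_2c$, then applies the simplified (F2) and cancellativity to conclude $m_1=m_2$; part ii) unwinds $Ma=Mb$ into $a=m_1b$, $b=m_2a$, and uses (F2) on $a=m_1m_2a$ to get $m_1m_2=1$. Your write-up is in fact slightly more explicit than the paper's, which leaves the final application of (F2) in part ii) implicit.
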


\begin{proof} i) There are elements $c\in A$, $m_1,m_2\in M$ by (F3), such that $m_1c=a$ and $m_2c=b$. So, $m_1mc=m_2mc$. Hence, $m_1m=m_2m$ and $m_1=m_2$.

ii) We have $a=mb$ and $b=na$ for some $m,n\in M$. As such, $a=mna$ and $mn=1$. This finishes the proof.
\end{proof}

Let $M$ be a commutative monoid. The universal (or Grothendieck-) group of $M$ will be denoted by $\Mg$. Recall that it consists of elements of the form $\frac{a}{b}$, $a,b\in M$. Two elements $\frac{a}{b}$ and $\frac{c}{d}$ are equivalent in $\Mg$ if there exists an element $s\in M$, such that $sad=scb$.

\begin{Le} \label{f411} Let $M$  be a cancellative monoid.
\begin{itemize}
\item[i)] Any filtered $M$-set is isomorphic to an $M$-subset of $\Mg$.
\item[ii)] Assume $A$ and $B$ are filtered $M$-subsets of $\Mg$ and $\alpha:A\rightarrow B$ is an $M$-set homomorphism. There exists $b\in\Mg$, such that $\alpha(a)=ab$ for all $a\in A$. 
\end{itemize}
\end{Le}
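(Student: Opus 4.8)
The plan is to treat the two parts separately: part i) follows almost formally from what is already known about filtered sets over groups, while part ii) amounts to exhibiting an explicit quotient in the abelian group $\Mg$. Throughout I use that, since $M$ is commutative and cancellative, the canonical homomorphism $f\colon M\to\Mg$ is injective and $\Mg$ is an abelian group in which every element of $M$ becomes invertible.

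For part i), I would first note that $f\colon M\hookrightarrow\Mg$ is an injective monoid homomorphism, so Corollary \ref{s1} applies directly: any filtered $M$-set $A$ is isomorphic to an $M$-subset of $A\otimes_M\Mg$, and by Lemma \ref{sk_ex} the latter is a \emph{filtered} $\Mg$-set. Since $\Mg$ is a group, Example \ref{t-233} shows that every filtered $\Mg$-set is isomorphic as an $\Mg$-set to $\Mg$ itself (for any base point $x$ the map $g\mapsto gx$ is a bijection onto $A\otimes_M\Mg$). Composing the embedding $A\hookrightarrow A\otimes_M\Mg$ with such an isomorphism $A\otimes_M\Mg\xrightarrow{\sim}\Mg$ realises $A$ as an $M$-subset of $\Mg$, which is exactly the claim.

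For part ii), the key point is that elements of $A\subseteq\Mg$ are invertible in $\Mg$, so I may form the quantity $a^{-1}\alpha(a)\in\Mg$ for each $a\in A$; the goal is to show it is independent of $a$ and then take $b$ to be this common value. Given $a_1,a_2\in A$, condition (F3) of Lemma \ref{Fcond}, applied to the filtered $M$-set $A$, produces $c\in A$ and $m_1,m_2\in M$ with $m_ic=a_i$. Because $\alpha$ is a homomorphism of $M$-sets, $\alpha(a_i)=\alpha(m_ic)=m_i\alpha(c)$, and using that $m_i$ is invertible in $\Mg$ one computes $a_i^{-1}\alpha(a_i)=(m_ic)^{-1}m_i\alpha(c)=c^{-1}\alpha(c)$ for $i=1,2$. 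Hence $a_1^{-1}\alpha(a_1)=a_2^{-1}\alpha(a_2)$, so $b:=c^{-1}\alpha(c)$ is well defined, and $\alpha(a)=ab$ for every $a\in A$.

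Both arguments are short, and the only place where anything genuinely happens is the invocation of (F3): it is precisely the filteredness of $A$ that routes any two of its elements through a common $c$, collapsing the potentially $a$-dependent expression $a^{-1}\alpha(a)$ to a single value. Rather than a real obstacle, the main point to be careful about is that the abelian group structure of $\Mg$ — commutativity together with invertibility of the image of $M$ — is exactly what makes $a^{-1}\alpha(a)$ both meaningful and constant; were $M$ merely cancellative but non-commutative one would have to separate left and right quotients, and the statement itself would need reformulating.
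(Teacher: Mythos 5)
Your proof is correct. Part i) is exactly the paper's argument: compose Corollary \ref{s1}, applied to the injection $M\hookrightarrow\Mg$ (which, as you note, is where cancellativity and commutativity enter), with Example \ref{t-233}. For part ii), however, you take a genuinely different route. The paper extends $\alpha$ to $\alpha_*:A\otimes_M\Mg\rightarrow B\otimes_M\Mg$ and, identifying both tensor products with $\Mg$ via Example \ref{t-233}, obtains an $\Mg$-equivariant map $\beta:\Mg\rightarrow\Mg$, which must be right multiplication by $b=\beta(1)$; the conclusion then descends to $A$ through a commutative diagram whose vertical maps are the embeddings from Corollary \ref{s1}. Your argument instead stays entirely inside $\Mg$: invertibility of the elements of $M$ (and of $A$) in $\Mg$ lets you form $a^{-1}\alpha(a)$, and condition (F3) of Lemma \ref{Fcond} together with $M$-equivariance of $\alpha$ collapses this expression to a single value $b$. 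Both proofs use the same two ingredients, filteredness and the group structure of $\Mg$, but package them differently. The paper's version is functorial and exhibits $b$ as the image of $1$ under the induced group-level map, fitting the pattern of Corollary \ref{s1}; yours is more elementary and self-contained, and it sidesteps a point the paper leaves implicit, namely that the identification $A\otimes_M\Mg\simeq\Mg$ is compatible with the inclusion $A\subseteq\Mg$, so that the outer rectangle of the paper's diagram really restricts to $\alpha$ on $A$. Your computation also isolates precisely where filteredness is needed: (F3) is what forces $a\mapsto a^{-1}\alpha(a)$ to be constant, while (F1) guarantees there is at least one element at which to evaluate it.
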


\begin{proof} i) This is a direct consequence of Corollary \ref{s1} and Example \ref{t-233}.

ii) It follows from the proof of Corollary \ref{s1} and Example \ref{t-233} that we have a commutative diagram
$$\xymatrix{ A\ar[r]^{\alpha}\ar[d]_{i_A} & B\ar[d]^{i_B} \\
			 A\otimes _M\Mg\ar[r]^{\alpha_*}\ar[d]_{\simeq} & B\otimes _M\Mg\ar[d]^{\simeq}\\
			 \Mg\ar[r]_{\beta} & \Mg. }$$
Here, $i_A$ and $i_B$ are injective maps. We have $\beta(ax)=a\beta(x)$ for all $a,x\in \Mg$ since $\beta$ is a $\Mg$-map. Take $x=1$ to obtain $\beta(a)=ab$, where $b=\beta(1)$.
\end{proof}

\begin{Co} Let $M$ be a cancellative monoid and $A$ and $B$ filtered $M$-sets. Any morphism of $M$-sets $\alpha:A\rightarrow B$ is injective.
\end{Co}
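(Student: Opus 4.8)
The plan is to reduce immediately to the situation handled by Lemma \ref{f411} and then use the single fact that the Grothendieck group $\Mg$ is a group.

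First I would invoke part i) of Lemma \ref{f411} to replace $A$ and $B$ by isomorphic $M$-subsets $A'$ and $B'$ of $\Mg$. Being filtered is an intrinsic property of an $M$-set (it is defined through the tensor functor $(-)\otimes_M A$ commuting with finite limits), so $A'$ and $B'$ are again filtered. Transporting $\alpha$ along these two isomorphisms yields a morphism $\alpha':A'\rightarrow B'$ of $M$-sets, and $\alpha$ is injective if and only if $\alpha'$ is. Hence it suffices to treat the case in which $A$ and $B$ are already filtered $M$-subsets of $\Mg$.

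Next I would apply part ii) of Lemma \ref{f411}, which provides a fixed element $b\in\Mg$ with $\alpha'(a)=ab$ for every $a\in A'$. Injectivity is then immediate: if $\alpha'(a_1)=\alpha'(a_2)$, then $a_1b=a_2b$ inside the group $\Mg$, and multiplying by $b^{-1}$ gives $a_1=a_2$.

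The only steps requiring a word of justification — and therefore the closest thing to an obstacle in an otherwise light argument — are the two reductions above: that the embeddings of Lemma \ref{f411} i) preserve filteredness (so that part ii) is applicable) and that injectivity is preserved under transporting $\alpha$ along isomorphisms of $M$-sets. Both are routine, since isomorphisms of $M$-sets are in particular bijections and carry the filteredness conditions (F1)--(F3) verbatim.
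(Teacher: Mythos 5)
Your proof is correct and takes essentially the same approach as the paper: the statement is presented there as an immediate consequence of Lemma \ref{f411}, obtained exactly as you do --- embed $A$ and $B$ as filtered $M$-subsets of $\Mg$ by part i), represent the transported morphism as multiplication by a fixed $b\in\Mg$ by part ii), and conclude injectivity since $\Mg$ is a group. Your explicit attention to the two reduction steps (isomorphism-invariance of filteredness and of injectivity) is sound and fills in details the paper leaves implicit.
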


\begin{Pro} Let $M$ be a commutative monoid and $P$ a topos point of $\set_M$. Assume $M$ is cancellative or almost finitely generated. The monoid $\End(P)$ of endofunctors of $P$ is commutative.
\end{Pro}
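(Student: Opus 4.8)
The plan is to reduce the statement to a concrete computation with filtered $M$-sets via Diaconescu's theorem, and then to observe that in either of the two hypotheses every endomorphism of the relevant filtered $M$-set is a ``multiplication map'', which forces commutativity. Concretely, the equivalence of categories $_M\F \xrightarrow{\sim} \Pts(\set_M)$ sends $A\mapsto f_A$, and a morphism $\alpha\colon A\to B$ of filtered $M$-sets induces the natural transformation $(-)\otimes_M\alpha$ of inverse image functors; this assignment is covariant and respects composition. Since $\End(P)$ is by definition the monoid of natural transformations $P^\bullet\Rightarrow P^\bullet$, and since $_M\F$ is a \emph{full} subcategory of $\set_M$, the equivalence restricts to a monoid isomorphism $\End(P)\cong \End_{\set_M}(A)$, where $A$ is the filtered $M$-set corresponding to $P$. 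Thus it suffices to prove that $\End_{\set_M}(A)$ is commutative.

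In the cancellative case I would invoke Lemma \ref{f411}. Part (i) lets me assume $A$ is an $M$-subset of the Grothendieck group $\Mg$, and part (ii), applied with $B=A$, shows that every $M$-set endomorphism $\alpha$ of $A$ has the form $\alpha(a)=ab_\alpha$ for a fixed element $b_\alpha\in\Mg$. Given two such endomorphisms $\alpha,\beta$, their composite satisfies $(\alpha\beta)(a)=a\,b_\beta b_\alpha$ and $(\beta\alpha)(a)=a\,b_\alpha b_\beta$; since $\Mg$ is abelian (as $M$ is commutative) these agree, so $\End_{\set_M}(A)$ is commutative.

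In the almost finitely generated case I would use Theorem \ref{thm} to write $A\cong M_\p$ for the prime ideal $\p=\p_A$ of Lemma \ref{212}, and then check directly that every $M$-set endomorphism $\phi$ of $M_\p$ is right multiplication by $\phi(1)\in M_\p$. Writing $x=\tfrac{m}{s}$ with $m\in M$ and $s\notin\p$, the map $l_s$ is invertible on $M_\p$ (its inverse is multiplication by $\tfrac1s$ in $M_\p$), and from $l_s\phi(\tfrac1s)=\phi(s\cdot\tfrac1s)=\phi(1)$ I get $\phi(\tfrac1s)=l_s^{-1}\phi(1)=\tfrac1s\,\phi(1)$, hence $\phi(x)=m\cdot\tfrac1s\,\phi(1)=x\,\phi(1)$. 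Consequently composition of endomorphisms of $M_\p$ corresponds to multiplying their values at $1$ inside the commutative monoid $M_\p$, and $\End_{\set_M}(M_\p)$ is commutative.

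The main obstacle is not any deep computation but rather making the opening reduction rigorous: one must confirm that the Diaconescu equivalence genuinely transports $\End(P)$ (natural transformations of inverse images, with their composition) to $\End_{\set_M}(A)$ as \emph{monoids}, so that commutativity of one yields commutativity of the other; note that even an anti-isomorphism would suffice here. The only other point requiring care is the verification in the second case that endomorphisms of the localisation $M_\p$ are exactly the multiplications, where one must keep the $M$-action and the induced $M_\p$-action carefully distinguished.
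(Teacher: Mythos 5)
Your proof is correct and takes essentially the same route as the paper: reduce to $\End_{\set_M}(A)$ via the Diaconescu equivalence, use Lemma \ref{f411} in the cancellative case to see that every endomorphism is multiplication by an element of the abelian group $\Mg$, and use Theorem \ref{thm} in the almost finitely generated case to identify $A\simeq M_\p$ and $\End_{\set_M}(M_\p)\simeq M_\p$ as monoids. Your explicit check that an endomorphism of $M_\p$ is multiplication by $\phi(1)$ is exactly the detail the paper dismisses with ``one easily sees''.
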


\begin{proof} Let $A$ be a filtered $M$-set corresponding to the point $P$. We know that there is an isomorphism of monoids $\End_{\set_M}(A)\simeq\End_M(P)$.

Assume $M$ is cancellative and let $\phi,\psi\in \End_{\set_M}(A)$. By Lemma \ref{f411}, we can assume that $A$ is an $M$-subset of $\Mg$ and that $\phi(a)=s_\phi\cdot a$, $\psi(a)=s_\psi\cdot a$, for all $a\in A$. Here, $s_\psi$ and $s_\phi$ are appropriate elements in $\Mg$. It follows that
$$\phi\psi(a)=s_\psi s_\phi\cdot a=\psi\phi(a).$$
Hence, $\End_{\set_M}(A)$ is commutative in this case.

If $M$ is almost finitely generated, we have $A\simeq M_\p$ for a prime ideal $\p$ by Theorem \ref{thm}. One easily sees that $\End_{\set_M}(A)\simeq M_\p$ as monoids. This implies the result.
\end{proof}

\subsection{Free Monoids}

In what follows, $M$ will denote a free commutative monoid with a basis set $\cX$. The aim is to investigate $\Pts(\set_M)$, the points of the topos of $M$-sets for arbitrary $\cX$. We will describe the isomorphism classes of $\Pts(\set_M)$ in terms of equivalence classes of certain functions $\cX\rightarrow \Z\coprod \{-\infty\}$. Our result is equivalent to the one obtained in \cite{cc1} when $\cX$ is countably infinite. Our methods and description, however, are rather different.

Recall that we denote the universal group of $M$ by $\Mg$. In our case, it is the free abelian group with basis $\cX$. Thus, any element $x\in\Mg$ has a unique presentation as
$$x=\prod_{\p\in \cX}\p^{\v_{\p(x)}}.$$
Here, $\v_\p(x)\in\Z$ and $\v_\p(x)=0$ for all but a finite number of $\p$. It is also clear that $x\in M$ if and only if $\v_\p(x)\geq 0$ for all $\p$.

\subsubsection{The set $ \Sigma_*(M)$}

Denote by $\Sigma(M)$ the set of all functions
$$f:\cX\rightarrow \mathbb{Z}\coprod \{-\infty\}$$
for which $f(\p)\leq 0$ for all but finitely many $\p\in \cX$. We define a congruence relation on $\Sigma (M)$ where $f\sim g$ provided: $f(\p)=-\infty$ if and only if $g(\p)=-\infty$, and $f(\p)=g(\p)$ for all but finitely many $\p\in \cX$. In other words, if the sum of all their differences $\sum_{\p\in\cX}f(\p)-g(\p)$ is finite.

Denote by $\Sigma_*(M)$ the set of equivalence classes $\Sigma(M)/\sim$ under this equivalency. The obvious addition in $\mathbb{Z}\coprod \{-\infty\}$ induces a monoid structure on $\Sigma_*(M)$.

\begin{Th}\label{m412} Let $M$ be a free monoid. There is an isomorphism of monoids
$${\sf F}_M\simeq \Sigma_*(M).$$
\end{Th}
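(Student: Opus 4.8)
\section*{Proof proposal}

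The plan is to realise every isomorphism class in ${\sf F}_M$ by a concrete $M$-subset of $\Mg$ and then read off its $\Sigma_*(M)$-class coordinatewise. Since $M$ is free it is cancellative, so by Lemma \ref{f411} (i) every filtered $M$-set is isomorphic to an $M$-subset of $\Mg$, and by Lemma \ref{f411} (ii) two such subsets $A,B$ are isomorphic exactly when $B=bA$ for a single $b\in \Mg$. Identifying $\Mg$ with $\Z^{(\cX)}$ via the valuations $\v_\p$, an $M$-subset of $\Mg$ is precisely an upward-closed subset for the coordinatewise order, and the conditions (F1)--(F3) of Lemma \ref{Fcond}, with (F2) automatic by cancellativity, say that a nonempty such $A$ is filtered if and only if it is downward directed: any two elements admit a common lower bound inside $A$. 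Thus ${\sf F}_M$ is the set of filtered (that is, nonempty, upward-closed, downward-directed) subsets of $\Z^{(\cX)}$ modulo the translation action of $\Mg$.

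First I would set up a bijection between filtered subsets and $\Sigma(M)$. To a filtered $A$ I attach $f_A(\p):=\inf_{a\in A}\v_\p(a)\in \Z\coprod\{-\infty\}$; choosing any $a_0\in A$ gives $f_A(\p)\le \v_\p(a_0)=0$ for all but finitely many $\p$, so $f_A\in\Sigma(M)$. Conversely, to $f\in\Sigma(M)$ I attach $A_f:=\{x\in\Mg \mid \v_\p(x)\ge f(\p)\ \text{for all}\ \p\}$, which is visibly upward-closed, is nonempty because $f\le 0$ off a finite set, and is downward directed because the coordinatewise minimum of two finitely supported elements again lies in $\Mg$ and in $A_f$. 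The identity $f_{A_f}=f$ is immediate, infima being attained since valuations are integers. The genuinely nontrivial point is $A_{f_A}=A$: given $y$ with $\v_\p(y)\ge f_A(\p)$ for all $\p$, I will produce a single $e\in A$ with $\v_\p(e)\le \v_\p(y)$ for all $\p$, whence $y\in A$ by upward-closedness.

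The main obstacle is exactly this last step, which requires turning coordinatewise control into simultaneous control. I would handle it by observing that only finitely many coordinates are constrained: the finite support $S$ of $y$, together with the finite set $T$ of coordinates at which a first candidate lower bound overshoots $0$. Choosing for each $\p\in S$ an element of $A$ beating $\v_\p(y)$ at $\p$, and for each $\p\in T$ an element of $A$ that is $\le 0$ at $\p$ (available since $f_A(\p)\le\v_\p(y)=0$ there), and then applying downward-directedness to this finite family, yields one $e\in A$ beating $y$ at every coordinate in $S\cup T$ at once; at the infinitely many remaining coordinates $\v_\p(y)=0$ and the candidate is already $\le 0$. This establishes the bijection $A\mapsto f_A$.

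Finally I would descend to quotients and verify the monoid structure. Translation by $b$ sends $f_A$ to $f_A+\v_\bullet(b)$, and the functions $\p\mapsto\v_\p(b)$ range over exactly the finitely supported $\Z$-valued functions on $\cX$; a direct check then shows that two elements of $\Sigma(M)$ differ by such a function if and only if they are $\sim$-equivalent (same $-\infty$-locus, equal off a finite set), so $A\mapsto f_A$ descends to a bijection ${\sf F}_M\to\Sigma_*(M)$. For the monoid structure, the multiplication map $A\otimes_M B\to\Mg$, $a\otimes b\mapsto ab$, is a morphism of filtered $M$-sets by Proposition \ref{ftp} (and $\Mg$ is itself filtered), hence injective by the Corollary following Lemma \ref{f411}, identifying $A\otimes_M B$ with $AB=\{ab\}$; then $f_{AB}(\p)=\inf_{a,b}\bigl(\v_\p(a)+\v_\p(b)\bigr)=f_A(\p)+f_B(\p)$, so the bijection is additive, and it sends the trivial point $M$ of Example \ref{ex1} to the zero function, the unit of $\Sigma_*(M)$. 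This completes the isomorphism of monoids ${\sf F}_M\simeq\Sigma_*(M)$.
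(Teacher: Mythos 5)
Your proposal is correct and follows essentially the same route as the paper: reduce via Lemma \ref{f411} to filtered $M$-subsets of $\Mg$, attach the coordinatewise infimum function, prove $A_{f_A}=A$ by the same two-stage directedness argument (first a common lower bound over the support of $y$, then a correction at the finitely many overshooting coordinates, exactly as in the paper's Proposition \ref{m414}), and descend to the quotient by the translation action of $\Mg$. The only organizational difference is the monoid-structure step, where you compute $f_{AB}=f_A+f_B$ after identifying $A\otimes_M B$ with $AB\subseteq \Mg$ via the injectivity corollary, rather than constructing the explicit isomorphism $A(f)\otimes_M A(g)\simeq A(f+g)$ of the paper's Lemma \ref{m415b}; both arguments rest on the same ingredients and yours is a mild streamlining.
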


Recall that we denoted by ${\sf F}_M$ the isomorphism classes of the topos points of $M$-sets. The proof is based on several auxiliary results and is given in Subsection \ref{proof of m412}.

\subsubsection{Auxiliary results}

Let $A$ be a filtered $M$-subset of $\Mg$ and $\p\in \cX$. That is to say, $A$ is a subset of $\Mg$, which is additionally filtered as an $M$-set. We know by part i) of Lemma \ref{f411} that all $M$-sets are of this form. Denote
$$\v_\p(A)={\sf inf}\{\v_\p(a) \ | \ a\in A\}.$$
We have $\v_\p(A)\not =+\infty$ since $A\not=\emptyset$. Thus, $\v_\p(A)\in \mathbb{Z}\coprod \{-\infty\}$. We obtain the function
$$\v(A):\cX\rightarrow \mathbb{Z}\coprod \{-\infty\},\ \ \ \p\mapsto \v_\p(A)$$
by varying $\p\in \cX$.

\begin{Pro}\label{m414} Let $M$ be a free commutative monoid on a basis set $\cX$.
\begin{itemize}
	\item[i)] Let $A$ be a filtered $M$-subset of $\Mg$. Then $\v(A)\in \Sigma(M)$ and
	$$A=\{x\in \Mg \ | \ \v_\p(x)\geq \v_\p(A) \quad {\rm for \ all} \quad \p\in \cX\}.$$
	\item[ii)] Let $f\in \Sigma(M)$ and set
	$$A(f)=\{x\in \Mg \ | \ \v_\p(x)\geq f(\p)\ {\rm for \ all} \ \p\in \cX\}.$$
	Then $A(f)$ is a filtered $M$-subset of $\Mg$.
\end{itemize}
\end{Pro}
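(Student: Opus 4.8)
The plan is to handle both parts by first extracting a single structural consequence of filteredness—closure under componentwise minima—and then carrying out the finite-support bookkeeping that the infinite basis $\cX$ forces upon us.

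For part i), I would first check that $\v(A)\in\Sigma(M)$. Since $A\neq\emptyset$ by condition (F1) of Lemma \ref{Fcond}, each $\v_\p(A)$ is different from $+\infty$ and hence lies in $\Z\coprod\{-\infty\}$; and fixing any $a_0\in A$, the equality $\v_\p(a_0)=0$ holds off a finite set, so $\v_\p(A)\leq\v_\p(a_0)=0$ there, giving $\v(A)\in\Sigma(M)$. The inclusion $A\subseteq\{x:\v_\p(x)\geq\v_\p(A)\text{ for all }\p\}$ is immediate from the definition of the infimum. The reverse inclusion is the heart of the matter, and here I would first prove that $A$ is closed under the meet $a_1\wedge a_2$, the element with $\v_\p(a_1\wedge a_2)=\min(\v_\p(a_1),\v_\p(a_2))$: condition (F3) yields $a\in A$ and $m_1,m_2\in M$ with $m_ia=a_i$, so $\v_\p(a)\leq\v_\p(a_i)$ for all $\p$ and thus $\v_\p(a)\leq\v_\p(a_1\wedge a_2)$; then $(a_1\wedge a_2)a^{-1}$ has nonnegative exponents, lies in $M$, and since $A$ is an $M$-subset we conclude $a_1\wedge a_2\in A$.

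With meet-closure in hand, take $x$ with $\v_\p(x)\geq\v_\p(A)$ for all $\p$, and aim to produce a single $a\in A$ with $\v_\p(a)\leq\v_\p(x)$ for all $\p$; then $xa^{-1}\in M$ and $x=(xa^{-1})a\in A$ finishes the job. To build such an $a$, fix any $a_0\in A$ and note that the bad set $\{\p:\v_\p(a_0)>\v_\p(x)\}$ is finite, because both $\v_\p(a_0)$ and $\v_\p(x)$ vanish off a finite set. For each bad $\p$, since $\v_\p(x)\geq\v_\p(A)=\inf\{\v_\p(a'):a'\in A\}$, I can choose $a_\p\in A$ with $\v_\p(a_\p)\leq\v_\p(x)$ (an infimum of integers bounded below is attained, and when it is $-\infty$ arbitrarily small values occur). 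Setting $a:=a_0\wedge\bigwedge_{\p\text{ bad}}a_\p$, which lies in $A$ by closure under finite meets, gives $\v_\p(a)\leq\v_\p(x)$ at every $\p$—at bad coordinates via $a_\p$, elsewhere via $a_0$. I expect this finite-support reduction to be the main obstacle: (F3) supplies only binary, hence finite, meets, so the whole argument rests on confining the constraints to finitely many coordinates.

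Part ii) is then routine. The set $A(f)$ is an $M$-subset because $\v_\p(mx)=\v_\p(m)+\v_\p(x)\geq\v_\p(x)\geq f(\p)$. For (F1), the element $\prod_{f(\p)>0}\p^{f(\p)}$ lies in $A(f)$ and is well-defined since $f\in\Sigma(M)$ makes the product finite. For (F3) I would take $a=a_1\wedge a_2$, which satisfies $\v_\p(a)\geq f(\p)$ and hence lies in $A(f)$, and set $m_i=a_ia^{-1}\in M$. Finally (F2) holds trivially in its cancellative form from the Cancellative monoids subsection, since $A(f)\subseteq\Mg$ and $\Mg$ is a group, so $m_1a=m_2a$ forces $m_1=m_2$. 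Invoking Lemma \ref{Fcond} then shows $A(f)$ is filtered.
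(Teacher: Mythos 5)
Your proof is correct, but it is organised around a different key lemma than the paper's. For the reverse inclusion in i), the paper applies (F3) twice in a hands-on way --- first over the finite set $\sigma(x)=\{\p \mid \v_\p(x)\neq 0\}$ to merge the witnesses $a_\p$ into a single $a\in A$, then over $\pi(a)=\{\p\notin\sigma(x) \mid \v_\p(a)>0\}$ to produce $b\in A$ --- and then runs a three-case verification that $\v_\p(b)\leq \v_\p(x)$ for every $\p$; for (F3) in ii) it constructs an auxiliary element $d$ with prescribed valuations on a finite set $S$. You instead isolate meet-closure (a filtered $M$-subset of $\Mg$ is closed under the componentwise minimum $a_1\wedge a_2$) as the single structural consequence of (F3), prove it once, and then both halves collapse: the reverse inclusion in i) follows from a finite ``bad set'' plus a finite meet of witnesses, and (F3) in ii) follows by taking $a=a_1\wedge a_2$ and $m_i=a_ia^{-1}$. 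This modularisation buys real simplicity --- the case analysis disappears, and the same lemma serves both parts --- and it also quietly repairs a blemish in the paper's ii): there, $d$ is defined by $\v_\p(d)=f(\p)$ for $\p\in S$, which is ill-defined when $f(\p)=-\infty$ for some $\p\in S$, whereas your meet (equivalently $\v_\p(d)=\min(\v_\p(a_1),\v_\p(a_2))$ on $S$) never leaves $\Z$. The paper's route, for its part, stays entirely inside the statement being proved and never introduces the lattice operation, but the essential ingredient --- finiteness of supports in $\Mg$ reducing everything to finitely many coordinates --- is the same in both arguments.
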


\begin{proof} i) Assume $\v(A)\nin\Sigma(M)$. We have $\v_\p(A)>0$ for infinitely many $\p\in \cX$. Let $a\in A$. We have $\v_\p(a)\geq \v_\p(A)>0$ for infinitely many $\p$. This contradicts the fact that $\v_\p(a)=0$ for all but finitely many $\p$ since $A\not=\emptyset$. Hence, $\v(A)\in\Sigma(M)$.

For the next assertion, observe that
$$A\subseteq \{x\in \Mg \ | \ \v_\p(x)\geq \v_\p(A) \quad {\rm for \ all} \quad \p\in \cX\}$$
by the definition of $\v(A)$. To see the other side, take an element $x\in \Mg$, such that $\v_\p(x)\geq \v_\p(A)$ for all $\p\in \cX$. We have to show that $x\in A$. We can choose elements $a_\p\in A$, $\p\in \cX$ for which
\begin{equation}\label{vpx}\v_\p(x)\geq \v_\p(a_\p),
\end{equation}
by our assumption on $x$. Moreover, we can assume $\v_\p(a_\p)\leq 0$ if $\v_\p(A)\leq 0$. Define
$$\sigma(x)=\{\p\in \cX \ | \ \v_\p(x)\not=0\}.$$
Observe that $\sigma(x)$ is a finite set. It follows from the filteredness axiom (F3) that there exists an element $a\in A$ and a finite set $\{m_\p\in M \ | \ \p\in \sigma(x)\}$, such that $a_\p=m_\p a$ for all $\p\in \sigma(x)$. Likewise, we introduce the finite set
$$\pi(a)=\{\p\in \cX \ | \ \p\nin \sigma(x) \ \ {\rm and} \ \ \v_\p(a)>0\}.$$
We use axiom (F3) once again to deduce that there exist elements $b\in A$, $m\in M$ and a finite set $\{m_\r\in M \ | \ \r\in \pi(a)\}$, such that $a=mb$ and $a_\r=m_\r b$ for all $\r\in \pi(a)$.

We claim that
$$\v_\p(x)\geq \v_\p(b) \ \  \text{ for \ all } \ \ \p\in \cX.$$ 
To check the claim, we first observe that 
$$\v_\p(a)=\v_\p(m)+\v_\p(b)\geq \v_\p(b) \ \ \text{ for all } \ \ \p\in \cX.$$
This is true since $m\in M$ and so, $\v_\p(m)\geq 0$. There are three cases to be considered.

\begin{itemize}
\item[Case 1:] $\p\in \sigma(x)$.  By Equation (\ref{vpx}), we have
$$\v_\p(x)\geq \v_\p(a_\p)=\v_\p(m_\p)+\v_\p(a)\geq \v_\p(a)\geq \v_\p(b).$$
\item[Case 2:] $\p\nin \sigma(x)$ and  $\p\nin \pi(a)$.  We have $\v_\p(x)=0$ and $\v_\p(a)\leq 0$ by assumption. This implies that
$$\v_\p(x)=0\geq \v_\p(a)\geq \v_\p(b).$$ 
\item[Case 3:] $\p\in \pi(a)$. Equation (\ref{vpx}) and our assumption implies that we have
$$0=\v_\p(x)\geq \v_\p(a_\p)=\v_\p(m_\p)+\v_\p(b)\geq \v_\p(b).$$
\end{itemize}
This proves the claim. Write $x=nb$, where $n=xb^{-1}\in \Mg$. We have
$$\v_\p(n)=\v_\p(x)-\v_\p(b)\geq 0, \  \p\in \cX.$$
It follows that $n\in M$. We see that $x\in A$ since $x=nb$, $b\in A$, $n\in M$ and $A$ is an $M$-subset of $ \Mg$.

ii) If $x\in A(f)$ and $m\in M$, then $\v_\p(mx)=\v_\p(m)+\v_\p(x)\geq \v_\p(x)\geq f(\p)$ and $mx\in A(f)$. It follows that $A(f)$ is an $M$-subset of $\Mg$. To see that it is filtered, we have to check axioms (F1)-(F3). Axiom (F2) holds automatically as $A(f)\subseteq \Mg$ and $\Mg$ is a group and in particular, cancellative.

To see that $A(f)$ is non-empty, take an element $a\in \Mg$ such that
$$\v_\p(a)=\begin{cases} 0, \ {\rm if} \ f(\p)\leq 0,\\
f(\p), \ {\rm if} \ f(\p)>0. \end{cases}$$
Such an element exists because $f$ takes positive values for only a finite number of $\p\in \cX$. It is clear that $a\in A(f)$. This implies $(F1)$.

Take two elements $b,c\in A(f)$. We choose a finite subset $S\subseteq \cX(M)$, such that $f(\p)\leq 0$ and $\v_\p(b)=0=\v_\p(c)$ for all elements $\p\nin S$. Take $d\in \Mg$ such that
$$\v_\p(d)=\begin{cases} 0, \ {\rm if} \ \p\nin S\\
f(\p), \ {\rm if} \ \p\in S. \end{cases}$$
We have $d\in A(f)$. Define $m=bd^{-1}$ and $n=cd^{-1}$. Then $\v_\p(m), \v_\p(n)\geq 0$ for all $\p\in \cX$ and thus, $m,n\in M$. Since $b=md, c=nd$, (F3) follows as well.
\end{proof}

We have shown in the above proposition, in conjunction with part i) of Lemma \ref{f411}, that elements of $\Sigma(M)$ define filtered $M$-sets, and that every filtered $M$-set is of that form. We will now investigate when two two functions $f,g\in\Sigma(M)$ define isomorphic $M$-set.

\begin{Le}\label{m415a} Let $f,g\in \Sigma(M)$ and assume
$$\Hom_{\set_M}(A(f),A(g))\not=\emptyset.$$
Then $f(\p)=-\infty$ implies $g(\p)=-\infty$. Moreover, $f(\p)\geq g(\p)$ for all but a finite number of $\p\in \cX(M)$.
\end{Le}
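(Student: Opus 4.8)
The plan is to reduce everything to the explicit description of homomorphisms between filtered $M$-subsets of $\Mg$ provided by part ii) of Lemma \ref{f411}, and then to read off the two assertions from elementary inequalities between valuations. Both $A(f)$ and $A(g)$ are filtered $M$-subsets of $\Mg$ by part ii) of Proposition \ref{m414}, so Lemma \ref{f411} applies: any element of $\Hom_{\set_M}(A(f),A(g))$ is of the form $\alpha(a)=ab$ for a fixed $b\in\Mg$. The hypothesis that this $\Hom$-set is non-empty therefore supplies such a $b$, and the requirement that $\alpha$ land in $A(g)$ becomes the family of inequalities
$$\v_\p(a)+\v_\p(b)=\v_\p(ab)\geq g(\p),\qquad \p\in\cX,\ a\in A(f).$$

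First I would extract the key inequality $f(\p)+\v_\p(b)\geq g(\p)$ for every $\p$ with $f(\p)\in\Z$. For such a $\p$, one can exhibit an element $a\in A(f)$ with $\v_\p(a)=f(\p)$: take $\v_\p(a)=f(\p)$, set $\v_\r(a)=f(\r)$ at the finitely many $\r$ where $f(\r)>0$, and $\v_\r(a)=0$ elsewhere. This is a legitimate element of $\Mg$ (its valuation vanishes outside a finite set) lying in $A(f)$, and feeding it into the displayed inequality gives $f(\p)+\v_\p(b)\geq g(\p)$.

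Next I would deduce the two claims. For the first, suppose $f(\p)=-\infty$. Then $A(f)$ contains elements $a$ with $\v_\p(a)$ arbitrarily negative (built as above, but with $\v_\p(a)=N$ for any $N\in\Z$), so the inequality $\v_\p(a)+\v_\p(b)\geq g(\p)$ forces $g(\p)$ to be no larger than $N+\v_\p(b)$ for every $N$; since $\v_\p(b)$ is fixed, this is only possible if $g(\p)=-\infty$. For the second claim, recall that $b\in\Mg$ has $\v_\p(b)=0$ for all but finitely many $\p$. At each such $\p$ with $f(\p)\in\Z$, the inequality $f(\p)+\v_\p(b)\geq g(\p)$ reads $f(\p)\geq g(\p)$, while at the (cofinitely many) $\p$ with $f(\p)=-\infty$ the first claim already gives $g(\p)=-\infty=f(\p)$. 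Hence $f(\p)\geq g(\p)$ holds for all but finitely many $\p$.

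No genuine obstacle is expected here: once Lemma \ref{f411} reduces an arbitrary homomorphism to multiplication by a single element $b$, the argument is purely a matter of minimising $\v_\p$ over $A(f)$ one coordinate at a time. The only point requiring a little care is the construction of elements of $A(f)$ with a prescribed (or arbitrarily negative) valuation at a single coordinate, which relies on the defining property of $\Sigma(M)$ that $f$ is positive at only finitely many $\p$, guaranteeing that the constructed elements have finite support and hence genuinely lie in $\Mg$.
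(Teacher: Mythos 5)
Your proof is correct and takes essentially the same route as the paper: both reduce the given homomorphism, via part ii) of Lemma \ref{f411}, to multiplication by a fixed element $b\in\Mg$, deduce the inequality $f(\p)+\v_\p(b)\geq g(\p)$ for all $\p$, and then read off both claims from the fact that $\v_\p(b)=0$ for all but finitely many $\p$. The only difference is that you spell out the construction of elements of $A(f)$ realising (or approaching) the infimum $f(\p)$ at a given coordinate, a step the paper leaves implicit.
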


\begin{proof} Let $\psi:A(f)\rightarrow A(g)$ be an $M$-set homomorphism. There exists an element $a\in \Mg$, such that $\psi(x)=ax$, $x\in A(f)$. This follows directly from Lemma \ref{f411}. Hence, $\v_\p(a)+f(\p)\geq g(\p)$ for all $\p\in {\cX}$ and the result follows.
\end{proof}

\begin{Le}\label{m415b} Let $f,g\in \Sigma(M)$. The isomorphism of $M$-sets
$$\alpha:\Mg\otimes _M\Mg\rightarrow \Mg,$$
given by $\alpha(a\otimes b)=ab$, restricts to an isomorphism of $M$-sets
$$A(f)\otimes_M A(g)\rightarrow A(f+g).$$
\end{Le}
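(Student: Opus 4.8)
The plan is to exhibit the map $\phi\colon A(f)\otimes_M A(g)\to A(f+g)$ given by $a\otimes b\mapsto ab$ and to show it is a well-defined bijection; this $\phi$ is precisely the restriction of the multiplication isomorphism $\alpha$. First I would verify that $\phi$ is well defined and lands in $A(f+g)$. The assignment $a\otimes b\mapsto ab$ respects the tensor relation $am\otimes b=a\otimes mb$ because $M$ is commutative, and for $a\in A(f)$ and $b\in A(g)$ one has $\v_\p(ab)=\v_\p(a)+\v_\p(b)\ge f(\p)+g(\p)=(f+g)(\p)$ for every $\p\in\cX$, so indeed $ab\in A(f+g)$. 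The same computation shows $\phi$ is $M$-equivariant.

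Injectivity then comes essentially for free. By Proposition~\ref{m414} the target $A(f+g)$ is a filtered $M$-set, and by Proposition~\ref{ftp} the source $A(f)\otimes_M A(g)$ is filtered, being a tensor product of the filtered $M$-sets $A(f)$ and $A(g)$. Since $M$ is free and therefore cancellative, the corollary following Lemma~\ref{f411} applies: every morphism of $M$-sets between filtered $M$-sets is injective. In particular $\phi$ is injective, so the only real content is surjectivity.

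The main obstacle is surjectivity, where the bookkeeping is forced both by the values $-\infty$ and by the requirement that elements of $\Mg$ have finite support. Given $z\in A(f+g)$, I would split its valuations coordinatewise. The set $S=\{\p\in\cX\mid \v_\p(z)\neq 0,\ \text{or}\ f(\p)>0,\ \text{or}\ g(\p)>0\}$ is finite, because $z\in\Mg$ has finite support and $f,g\in\Sigma(M)$ satisfy $f(\p),g(\p)\le 0$ off a finite set. Outside $S$ I set $\v_\p(a)=\v_\p(b)=0$, which is compatible with $ab=z$ there (as $\v_\p(z)=0$) and with the constraints $\v_\p(a)\ge f(\p)$, $\v_\p(b)\ge g(\p)$ (as $f(\p),g(\p)\le 0$). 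For the finitely many $\p\in S$ I choose integers with $\v_\p(a)+\v_\p(b)=\v_\p(z)$, $\v_\p(a)\ge f(\p)$ and $\v_\p(b)\ge g(\p)$; such a choice exists since $\v_\p(z)\ge f(\p)+g(\p)$, and whenever one of $f(\p),g(\p)$ equals $-\infty$ I assign the finite constraint to the corresponding valuation and let the other absorb the remaining degree. These prescriptions define $a,b\in\Mg$ with finite support satisfying $a\in A(f)$, $b\in A(g)$ and $ab=z$, so $\phi(a\otimes b)=z$. Combined with injectivity, this shows $\phi$ is an isomorphism of $M$-sets.
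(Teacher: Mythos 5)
Your proof is correct, and its overall skeleton --- check that $a\otimes b\mapsto ab$ is well defined and lands in $A(f+g)$, dispose of injectivity cheaply, then do the real work in a coordinatewise construction of a preimage --- matches the paper's. Two genuine points of divergence are worth recording. For injectivity, the paper draws a commutative square whose vertical arrows $A(f)\otimes_M A(g)\to \Mg\otimes_M\Mg$ and $A(f+g)\to\Mg$ are injective and deduces that the restriction $\beta$ of the bijection $\alpha$ is injective; you instead combine Proposition \ref{ftp} (a tensor product of filtered $M$-sets is filtered), Proposition \ref{m414} (both $A(f)\otimes_M A(g)$'s factors and $A(f+g)$ are filtered) with the corollary following Lemma \ref{f411} (over a cancellative monoid, any morphism between filtered $M$-sets is injective). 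Both are valid; your route trades the diagram for a citation and avoids having to justify that the vertical arrows are injective. For surjectivity, the paper chooses $a$ with $\v_\p(a)=f(\p)$ outside the set $S=\{\p \mid f(\p)\le 0,\ g(\p)\le 0,\ \v_\p(c)=0\}$ and puts $b=ca^{-1}$, whereas you split $\v_\p(z)$ symmetrically between the two factors over a finite bad set. This is the same idea, but your version is actually the more careful one: the paper's prescription $\v_\p(a)=f(\p)$ is meaningless when $f(\p)=-\infty$ (which can happen for $\p\nin S$, e.g.\ when $\v_\p(c)\neq 0$), and its verification uses the cancellation $f(\p)+g(\p)-f(\p)=g(\p)$, which is invalid at $-\infty$; your explicit rule of assigning the finite constraint to one valuation and letting the other absorb the remaining degree handles exactly this edge case. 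So your argument is not only correct but quietly patches a small imprecision in the published proof.
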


\begin{proof} Take  $a\in A(f)$ and $b\in A(g)$. We have $\v_\p(ab)=\v_\p(a)+\v_\p(b)\geq f(\p)+g(\p)$. This shows that the image of $A(f)\otimes_MA(g)$ under $\alpha$ lies in $A(f+g)$. We get a commutative diagram
$$\xymatrix{ A(f)\otimes_MA(g)\ar[r]^\beta\ar[d] & A(f+g)\ar[d]\\
			 \Mg\otimes_M \Mg\ar[r]_{\ \ \alpha} & \Mg. }$$
The vertical arrows are injective and $\beta$ is a restriction of $\alpha$. Since $\alpha$ is bijective, $\beta$ is bijective if and only if it is surjective. Let $c\in A(f+g)$ and define
$$S=\{\p\in \cX \ | \ f(\p)\leq 0, g(\p)\leq 0 \ {\rm and} \ \v_p(c)=0\}.$$
We can choose $a\in\Mg$, such that
$$\v_\p(a)=\begin{cases} f(\p), \ {\rm if} \ \p\nin S\\
						 0, \ {\rm if} \ \p\in S \end{cases}$$
since $\cX\setminus S$ is finite. We have $a\in A(f)$ and define $b=ca^{-1}\in \Mg$. We claim that $b\in A(g)$. Assume $\p\nin S$. We have
$$\v_\p(b)=\v_\p(c)-\v_\p(a)\geq f(\p)+g(\p)-f(\p)=g(\p).$$
On the other hand, if $\p\in S$, one has
$$\v_\p(b)=\v_\p(c)-\v_\p(a)=0 \geq g(\p).$$
This shows that $\beta(a\otimes b)=c$ and hence, $\beta$ is surjective. This finishes the proof.
\end{proof}

\subsubsection{Proof of Theorem \ref{m412}}\label{proof of m412}

We are now in position to give the proof of the main theorem of this section.

\begin{proof} We know by part ii) of Proposition \ref{m414} that there exists a filtered $M$-set $A(f)$ for every $f\in \Sigma(M)$. This association induces a map $\Sigma(M)\rightarrow {\sf F}_M$. This is surjective by part i) of Proposition \ref{m414}. Assume $f\sim g$ and define
$$S=\{\p\in \cX \ | \ f(\p)\not=g(\p)\}.$$
There exist an element $b\in \Mg$ such that
$$\v_\p(b)=\begin{cases} 0, \ {\rm if} \ \p\not\in S\\
\v_\p(g)-\v_\p(f) , \ {\rm if} \ \p\in S,\end{cases}$$
since $S$ is a finite set. As such, $\psi:A(f)\rightarrow A(g)$ is an isomorphism of $M$-sets. Here,
$$\psi(a)=ab, \ a\in A(f).$$ 
This shows that the above map $\Sigma(M)\rightarrow {\sf F}_M$ factors through $\Sigma_*(M)\rightarrow {\sf F}_M$, which is naturally surjective as well.

Take $f,g\in \Sigma(M)$ and assume that $A(f)$ and $A(g)$ are isomorphic $M$-sets. It follows from Lemma \ref{m415a} that $f\sim g$. This proves bijectivity. It respects the monoid structure according to Lemma \ref{m415b}.
\end{proof}

\subsection{On the maps $\alpha_M$ and $\gamma_M$}\label{algam}

Let us return (see Proposition \ref{55125}) to the injective homomorphisms of commutative monoids
$$M^{sl}\xrightarrow{\alpha_M}\Spec(M)\xrightarrow{\gamma_M}F_M.$$
We assume that $M$ is a free monoid generated by the infinite set $\cX$.

To see that $\alpha_M$ is not an isomorphism, we first observe that ${\sf Card}(M)={\sf Card}(\cX)={\sf Card}(M^{sl})$. As previously shown \cite[Lemma 2.1]{p1}, we have an isomorphism 
$$\Spec(M)\simeq \Hom(M,\I).$$
Here, $\I=\{0,1\}$ with obvious multiplication. It follows that $|\Spec(M)|=2^{\cX}$ since $M$ is free with basis $\cX$. As such, ${\sf Card}(\Spec(M))\gneqq{\sf Card}(M^{sl})$ is strictly bigger and $\alpha_M$ is not an isomorphism.

\begin{Le} The homomorphism
$$2^{\cX}\simeq \Spec(M)\xrightarrow{\gamma_M} {\sf F}_M\simeq \Sigma_*(M)$$ 
has the following description, according to Theorem \ref{m412}: Let $T$ be a subset of $\cX(M)$. Then $\gamma_M(T)\in\Sigma_*(M)$ is represented by the function $\gamma(T):\cX(M)\rightarrow \Z\coprod{-\infty}$, given by
$$\gamma(T)(\p)=\begin{cases}-\infty, \ {\rm if} \ \ \p\nin T\\ 0, \ {\rm if} \ \p\in T. \end{cases}$$
\end{Le}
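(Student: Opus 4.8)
The plan is to unwind the two isomorphisms in the statement and then compute directly. By Proposition~\ref{inj}, $\gamma_M$ sends a prime ideal $\mathfrak q$ to the isomorphism class of the filtered $M$-set $M_{\mathfrak q}$ (which is filtered by Lemma~\ref{221}), and by the proof of Theorem~\ref{m412} the isomorphism ${\sf F}_M\simeq\Sigma_*(M)$ carries the class of a filtered $M$-subset $A\subseteq\Mg$ to the class of the function $\v(A)\colon \p\mapsto\v_\p(A)={\sf inf}\{\v_\p(a)\mid a\in A\}$, this being the inverse of $f\mapsto A(f)$ by Proposition~\ref{m414}. Hence, read through these identifications, $\gamma_M$ is computed by realising $M_{\mathfrak q}=S^{-1}M$ as the $M$-subset $\{ms^{-1}\mid m\in M,\ s\in S\}$ of $\Mg$ via $\tfrac{m}{s}\mapsto ms^{-1}$ (legitimate since $M$ is cancellative, so the localisation embeds in $\Mg$) and recording the function $\v(M_{\mathfrak q})$. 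Everything thus reduces to identifying the prime ideal attached to $T$ and evaluating $\v_\p(M_{\mathfrak q})$ for each $\p\in\cX$.

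Next I would pin down the prime ideal corresponding to $T$ under $2^{\cX}\simeq\Spec(M)\simeq\Hom(M,\I)$. A subset $T\subseteq\cX$ corresponds to the prime ideal $\mathfrak q_T$ generated by the basis elements lying in $T$; concretely $\mathfrak q_T=\{m\in M\mid \v_\p(m)>0\ \text{for some}\ \p\in T\}$, so that $T=\mathfrak q_T\cap\cX$. Primality is immediate since supports add under multiplication. Its complementary multiplicative set is $S=M\setminus\mathfrak q_T=\{s\in M\mid \v_\p(s)=0\ \text{for all}\ \p\in T\}$, that is, the monoid of monomials supported on $\cX\setminus T$.

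Finally I would compute $\v_\p(M_{\mathfrak q_T})$ for each $\p\in\cX$ in two cases. If $\p\notin T$ then $\p\in S$, so $\p^{-k}=1\cdot(\p^{k})^{-1}\in M_{\mathfrak q_T}$ for every $k\ge 0$, giving $\v_\p(\p^{-k})=-k$ and hence $\v_\p(M_{\mathfrak q_T})=-\infty$. If $\p\in T$ then $\v_\p(s)=0$ for all $s\in S$, so every $ms^{-1}\in M_{\mathfrak q_T}$ satisfies $\v_\p(ms^{-1})=\v_\p(m)\ge 0$, with the value $0$ attained at $1\in M_{\mathfrak q_T}$; thus $\v_\p(M_{\mathfrak q_T})=0$. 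This is exactly the function $\gamma(T)$ of the statement, completing the proof. The argument is essentially a direct calculation: the only genuine point requiring care is keeping the identification $T=\mathfrak q_T\cap\cX$ straight, since the dual convention would interchange $0$ and $-\infty$ and reverse the formula.
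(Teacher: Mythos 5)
Your proof is correct and takes essentially the same route as the paper: identify $T$ with the prime ideal generated by the basis elements of $T$, realise the localisation $M_{\q_T}$ as an $M$-subset of $\Mg$, and check that it is exactly $A(\gamma(T))$ (equivalently, that $\v(M_{\q_T})=\gamma(T)$). As a side remark, your description $\q_T=\{m\in M \mid \v_\p(m)>0 \textnormal{ for \emph{some} } \p\in T\}$ silently corrects the paper's proof, which writes ``for all'' instead of ``for some'' --- the ``for all'' version is not even a prime ideal once $T$ has at least two elements, and only the ``for some'' version localises to $A(\gamma(T))$.
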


\begin{proof} It is clear that the prime ideal $\p_T$ corresponding to $T$ is given by
$$\p_T=\{x\in M \ | \ \v_\p(x)>0 \ {\rm for \ all} \ \p\in T\}.$$
Therefore, the localisation of $M$ at the prime ideal $\p_T$ is 
$$\{x\in \Mg \ | \ \v_\p(x)\geq 0 \ {\rm for \ all} \ \p\in T\}=A(\gamma(T)).$$
\end{proof}

It is clear from the description that $\gamma_M$ is not an isomorphism for infinite $\cX$: Indeed, let $f\in \Sigma$ be any function for which $-\infty<f(\p)<0$ for infinitely many $\p$. It defines an element in $\Sigma_*(M)$, which is not in the image of $\gamma_M$.

\begin{center}

\end{center}
\end{document}